\newtheorem{theorem}{Theorem}[section]
\theoremstyle{plain}
\newtheorem{claim}[theorem]{Claim}
\newtheorem{definition}[theorem]{Definition}
\newtheorem{lemma}[theorem]{Lemma}
\newtheorem{notation}[theorem]{Notation}
\newtheorem{proposition}[theorem]{Proposition}
\newtheorem{remark}[theorem]{Remark}
\numberwithin{equation}{section}
\theoremstyle{remark}
\newtheorem*{acknowledgement}{Acknowledgement}
\begin{document}
\title[Orbit Injections Equivalence and C*-algebras]{C*-algebraic
characterization of Bounded Orbit Injection Equivalence for Minimal Free
Cantor Systems}
\author{Fr\'{e}d\'{e}ric Latr\'{e}moli\`{e}re}
\address{Department of Mathematics\\
University of Denver}
\email{frederic@math.du.edu}
\urladdr{http://www.math.du.edu/\symbol{126}frederic}
\author{Nicholas Ormes}
\address{Department of Mathematics\\
University of Denver}
\email{normes@math.du.edu}
\urladdr{http://www.math.du.edu/\symbol{126}normes}
\date{March 9, 2009}
\subjclass{37D05 (primary), 37B50, 46L99, 16D90 (secondary)}
\keywords{Minimal Cantor Systems, Orbit Injections, C*-algebras,
Rieffel-Morita Equivalence}

\begin{abstract}
Bounded orbit injection equivalence is an equivalence relation defined on
minimal free Cantor systems which is a candidate to generalize flip Kakutani
equivalence to actions of the Abelian free groups on more than one
generator. This paper characterizes bounded orbit injection equivalence in
terms of a mild strengthening of Rieffel-Morita equivalence of the
associated C*-crossed-product algebras. Moreover, we construct an ordered
group which is an invariant for bounded orbit injection equivalence, and
does not agrees with the $K_{0}$ group of the associated C*-crossed-product
in general. This new invariant allows us to find sufficient conditions to
strengthen bounded orbit injection equivalence to orbit equivalence and
strong orbit equivalence.
\end{abstract}

\maketitle

\section{\protect\bigskip Introduction}

This paper establishes a characterization of bounded orbit injection
equivalence, as introduced in \cite{Lightwood07} by S. Lightwood and the
second author, in terms of a strengthened form of Rieffel-Morita equivalence
between C*-crossed-products. For minimal $\mathbb{Z}$-actions of the Cantor
set, bounded orbit injection equivalence is equivalent to flip-Kakutani
equivalence, i.e., the equivalence relation generated by Kakutani
equivalence and time reversal. Bounded orbit injection equivalence is a
generalization of flip-Kakutani equivalence which applies to actions of $%
\mathbb{Z}^{d}$ where time reversal is not a well-defined concept. While
Giordano, Putnam and Skau have shown in \cite[Theorem 2.6]{Putnam95}\ that
Kakutani strong orbit equivalence is characterized by Rieffel-Morita
equivalence of the C*-crossed-products, the C*-algebraic picture of Kakutani
equivalence, and more generally bounded orbit injection equivalence, is the
main new result of this article and, informally, can be described as a form
of Rieffel-Morita equivalence where moreover the space on which the action
occurs is remembered. We begin our paper with an introduction of the
concepts we will use and the framework for our characterization. We then
establish our characterization in the next section. In the last section we
apply our results to derive a sufficient condition for bounded orbit
injections to give rise to strong orbit equivalence. This condition involves
an ordered group which is a direct $\mathbb{Z}^{d}$ analog of the group used
in \cite[Theorem 2.6]{Putnam95} when $d=1$, but in contrast to that case, it
is not the $K_{0}$-group of the C*-algebra when $d>1$. Giordano, Matui,
Putnam and Skau have recently shown that this group modulo the infinitesmal
subgroup characterizes minimal $\mathbb{Z}^{d}$ Cantor systems up to orbit
equivalence \cite{Putnam09}.

A triple $\left( X,\varphi ,\mathbb{Z}^{d}\right) $ is a dynamical system
(on a compact space) when $X$ is a compact space and $\varphi $ is an action
of $\mathbb{Z}^{d}$ on $X$ by homeomorphisms. A Cantor system $\left(
X,\varphi ,\mathbb{Z}^{d}\right) $ is a dynamical system where $X$ is a
Cantor set. Moreover, if $\varphi $ is a free action, then $\left( X,\varphi
,\mathbb{Z}^{d}\right) $ will be called free as well, and if every point in $%
X$ has a dense orbit for the action $\varphi $ then $\left( X,\varphi ,%
\mathbb{Z}^{d}\right) $ will be called minimal.

Let $\left( X,\varphi ,\mathbb{Z}^{d}\right) $ and $(Y,\psi ,\mathbb{Z}^{d})$
be two free minimal Cantor systems for some positive integer $d$. We wish to
investigate when $\varphi $ and $\psi $ are equivalent in some dynamically
meaningful way. A natural concept of equivalence is given by conjugacy: $%
\varphi $ and $\psi $ are conjugate when there exists a homeomorphism $%
h:X\rightarrow Y$ such that $h\circ \varphi =\psi \circ h$. However,
classification up to conjugacy is a very complex problem, and it has proven
fruitful to study weaker form of equivalences with more tractable
invariants. A fundamental example of such an equivalence is orbit
equivalence \cite{Putnam95}:\ the actions $\varphi $ and $\psi $ of $\mathbb{%
Z}$ are orbit equivalent when there exists a homeomorphism $h:X\rightarrow Y$
and two maps $n:X\rightarrow \mathbb{Z}$ and $m:Y\rightarrow \mathbb{Z}$
such that for all $x\in X$ and $y\in Y$ we have $h\circ \varphi (x)=\psi
^{n(x)}\circ h(x)$ and $\varphi ^{m(y)}\circ h^{-1}(y)=h^{-1}\circ \psi (y)$%
. In other words, $\varphi $ and $\psi $ are orbit equivalent if and only if
there exists a homeomorphism $h:X\rightarrow Y$ which maps orbits to orbits.
Following \cite{Putnam95}, the actions $\varphi $ and $\psi $ are strongly
orbit equivalent when the maps $m$ and $n$ are discontinuous at one point at
most. Giordano, Putnam and Skau proved in \cite[Theorem 2.6]{Putnam95}\ that
the C*-crossed-product algebra of the two minimal free actions $\varphi $
and $\psi $ of $\mathbb{Z}$ on a Cantor set are *-isomorphic if and only if $%
\varphi $ and $\psi $ are strongly orbit equivalent.

\bigskip The C*-algebra of an action $\alpha $ of $\mathbb{Z}^{d}$ on a
compact set $X$ is defined as the universal C*-algebra $C(X)\rtimes _{\alpha
}\mathbb{Z}^{d}$ generated by $C(X)$ and unitary operators $U_{\alpha }^{z}$
for all $z\in \mathbb{Z}^{d}$ subject to the relations $U_{\alpha
}^{z}f\left( U_{\alpha }^{z}\right) ^{\ast }=f\circ \alpha ^{-z}$ and $%
U_{\alpha }^{z}U_{\alpha }^{z^{\prime }}=U_{\alpha }^{z+z^{\prime }}$ for
all $f\in C(X)$ and $z,z^{\prime }\in \mathbb{Z}^{d}$ with $U_{\alpha
}^{0}=1 $. C*-crossed-products were introduced in Zeller-Meier in \cite%
{Zeller-Meier68} and have a rich and complex structure as C*-algebras \cite%
{Zeller-Meier68}\cite{Pedersen79} \cite{Williams07}, whose connection with
the defining action is not always clear. It is thus a remarkable fact that
when $\alpha $ is a minimal and free action of $\mathbb{Z}$ on a Cantor set $%
X$, the crossed-product $C(X)\rtimes _{\alpha }\mathbb{Z}$ is a complete
invariant for strong orbit equivalence. Moreover, in this case, the
C*-crossed-products are inductive limits of so called circle algebras and
are fully classified up to *-isomorphism by their $K$ groups (including the
order on $K_{0}$ and some distinguished elements in each group $K_{0}$ and $%
K_{1}$) , as shown by Elliott in \cite{Elliott93}. Thus, the ordered $K_{0}$
group and its order unit of the C*-crossed-product for such actions form a
complete invariant of strong orbit equivalence.

\bigskip Yet, in general, proving that two C*-algebras are *-isomorphic is
nontrivial, and again some weaker but interesting form of equivalence have
been introduced to help with this problem. Two C*-algebras $A$ and $B$ are
Rieffel-Morita equivalent when, informally, their categories of
non-degenerate representations on Hilbert spaces are equivalent. More
formally, two C*-algebras $A$ and $B$ are Rieffel-Morita equivalent when
there exists a full $B$ Hilbert module $M\ $such that the C*-algebra of
compact adjoinable operators on $M$ is *-isomorphic to $A$ \cite[Theorem
4.26 p. 164]{GraciaVarilly}\cite{Rieffel74b}. In particular, when the
C*-algebras $A$ and $B$ are simple, if there exists a *-isomorphism $\varphi
:A\rightarrow pBp$ with $p$ some nonzero projection in $B$ then $A$ and $B$
are Morita equivalent. In \cite[Theorem 2.6]{Putnam95}, Giordano, Putnam and
Skau established that $C(X)\rtimes _{\varphi }\mathbb{Z}$ and $C(Y)\rtimes
_{\psi }\mathbb{Z}$ are Rieffel-Morita equivalent if and only if $\varphi $
and $\psi $ are Kakutani strong orbit equivalent. Kakutani strong orbit
equivalence is defined in terms of induced systems:\ if $A$ is a clopen
subset of a Cantor set $Z$ and $\alpha $ is a minimal free action of $%
\mathbb{Z}$ on $Z$ then $\alpha ^{A}$ is the action of $\mathbb{Z}$ on $A$
defined by first return times of $\alpha $ to $A$, and is called a derived
system of $\alpha $. Now, $\varphi $ and $\psi $ are strongly orbit Kakutani
equivalent if there exists a free minimal action $\alpha $ of $\mathbb{Z}$
on some Cantor set $Z$ such that both $\varphi $ and $\psi $ are conjugated
to derived systems of $\alpha $. The systems $\varphi $ and $\psi $ are
Kakutani strong orbit equivalent when they are strongly orbit equivalent to
derived systems of $\alpha $. Thus once again, a dynamical concept such as
Kakutani strong orbit equivalence is characterized by a standard concept of
C*-algebra theory --- Rieffel-Morita equivalence. It is thus natural to
investigate analogous C*-algebraic descriptions of other forms of
equivalence between minimal free Cantor systems.

\bigskip Two minimal free systems $\left( X,\varphi ,\mathbb{Z}\right) $ and 
$\left( Y,\psi ,\mathbb{Z}\right) $ are flip-Kakutani equivalent if $\left(
X,\varphi ,\mathbb{Z}\right) $ is Kakutani equivalent to either $\left(
Y,\varphi ,\mathbb{Z}\right) $ or its time reversed system $\left( Y,\varphi
\circ \sigma ,\mathbb{Z}\right) $ where $\sigma :z\in \mathbb{Z}\mapsto -z$.
Now, $\sigma $ is the only automorphism of $\mathbb{Z}$ besides the
identity, but the situation is more complicated for $\mathbb{Z}^{d}$ in
general. To generalize the notion of flip-Kakutani equivalence, S. Lightwood
and the second author introduced in \cite{Lightwood07} the notion of orbit
injection equivalence between minimal free actions of $\mathbb{Z}^{d}$ on
the Cantor set. In general, for an action $\alpha $ of $\mathbb{Z}^{d}$ on a
set $Z$ we denote the image of $z\in \mathbb{Z}^{d}$ by $\alpha ^{z}$. We
recall from \cite{Lightwood07}:

\begin{definition}
\label{OrbitInjection}Let $\left( X,\varphi ,\mathbb{Z}^{d}\right) $ and $%
\left( Y,\psi ,\mathbb{Z}^{d}\right) $ be two free dynamical systems. Then a
map $\theta :X\rightarrow Y$ is an \emph{orbit injection} from $\left(
X,\varphi ,\mathbb{Z}^{d}\right) $ to $\left( Y,\psi ,\mathbb{Z}^{d}\right) $
when $\theta $ is a continuous open injection such that for all $x,y\in X$
we have: 
\begin{equation}
\exists w\in \mathbb{Z}^{d}\text{ such that }\varphi ^{w}(x)=y\ \text{if and
only if }\exists v\in \mathbb{Z}^{d}\text{ such that }\psi ^{v}\circ \theta
(x)=\theta (y)\text{.}  \label{oicocycle}
\end{equation}%
The \emph{cocycle} for an orbit injection $\theta $ is the function $\eta
:X\times \mathbb{Z}^{d}\rightarrow \mathbb{Z}^{d}$ defined by $\psi ^{\eta
\left( x,w\right) }\theta \left( x\right) =\theta \left( \varphi
^{w}x\right) $. The orbit injection $\theta $ is called \emph{bounded }if
the cocycle $\eta $ is continuous.
\end{definition}

\bigskip We observe that, as the actions $\varphi $ and $\psi $ are free,
Identity (\ref{oicocycle}) uniquely defines the cocycle $\eta $. Moreover,
the range of an orbit injection is clopen, since it is assumed open by
definition and it is the continuous image of a compact set so it is closed
as well.

\begin{remark}
In \cite{Lightwood07}, bounded orbit injections are not required to have
open range, and it is shown instead that for $d=2$ the range of bounded
orbit injections has nonempty interior. In general, given two minimal free
Cantor systems $\left( X,\varphi ,\mathbb{Z}^{d}\right) $ and $\left( Y,\psi
,\mathbb{Z}^{d}\right) ,$ the range of a bounded orbit injection $\theta $
from $\left( X,\varphi ,\mathbb{Z}^{d}\right) $ to $\left( Y,\psi ,\mathbb{Z}%
^{d}\right) $ has nonempty interior if and only if it is open (or
equivalently $\theta $ is an open map). Indeed, assume $X^{\prime }=\theta
(X)$ contains an open subset $U$. Let $y\in X^{\prime }$. By minimality
there exists $z\in \mathbb{Z}^{d}$ and $\omega \in U$ such that $\psi
^{z}(\omega )=y$. Then there exists a unique $x\in X$ and $h\in \mathbb{Z}%
^{d}$ such that $\theta (x)=\omega $ and $\theta \left( \varphi ^{-h}\left(
x\right) \right) =y$ since $\theta $ is an orbit injection. Now, since the
cocycle $\eta $ associated to $\theta $ is continuous, the subset $\Omega
=\left( \eta \left( \cdot ,z\right) \right) ^{-1}\left( \left\{ -h\right\}
\right) $ of $X$ is open. Let $V=\Omega \cap \theta ^{-1}(U)$ which is open
in $X$ as well. Now, since $X$ is compact and $\theta $ is a continuous
injection, $\theta $ is a homeomorphism from $X$ onto $X^{\prime }$ for its
relative topology in $Y$. Thus in particular, $\theta (V)$ is relatively
open in $X^{\prime },$ i.e. there exists an open subset $\Upsilon $ of $Y$
such that $\theta (V)=\Upsilon \cap X^{\prime }$. Yet $\theta (V)\subseteq
U\subseteq X^{\prime }$ and $U$ is open in $Y$ so $\theta (V)=\Upsilon \cap
U $ is open in $Y$. Moreover, by construction, $\psi ^{z}\left( \theta
\left( V\right) \right) =\theta \left( \varphi ^{-h}\left( V\right) \right) $
so $\psi ^{z}\left( \theta \left( V\right) \right) $ is an open subset of $Y$
(since $\psi ^{z}$ is a homeomorphism) and is contained in $\theta (X)$. Yet 
$y\in \psi ^{z}\left( \theta \left( V\right) \right) $, so $\theta (X)$ is a
neighborhood of each of its points and is thus open in $Y$. The converse is
immediate.
\end{remark}

\begin{definition}
\label{BOIE}Let $\left( X,\varphi ,\mathbb{Z}^{d}\right) $ and $\left(
Y,\psi ,\mathbb{Z}^{d}\right) $ be two free minimal Cantor systems. Then $%
\left( X,\varphi ,\mathbb{Z}^{d}\right) $ and $\left( Y,\psi ,\mathbb{Z}%
^{d}\right) $ are\emph{\ bounded orbit injection equivalent} when there
exists a minimal free Cantor system $\left( Z,\alpha ,\mathbb{Z}^{d}\right) $
with two bounded orbit injections $\theta _{\varphi }$ and $\theta _{\psi }$
from, respectively, $\left( X,\varphi ,\mathbb{Z}^{d}\right) $ and $\left(
Y,\psi ,\mathbb{Z}^{d}\right) $ to $\left( Z,\alpha ,\mathbb{Z}^{d}\right) $.
\end{definition}

\bigskip The fact that bounded orbit injection equivalence is reflexive and
symmetric is obvious, and transitivity was proven indirectly in \cite%
{Lightwood07} for $d=2$ where bounded orbit injection equivalence is proven
to be the same relation as suspension equivalence. In the last section of
this paper, we will give a proof that bounded orbit injection equivalence is
transitive for any $d\in \mathbb{N}\setminus \left\{ 0\right\} $ and thus is
indeed an equivalence relation.

\bigskip This paper establishes that the systems $\left( X,\varphi ,\mathbb{Z%
}^{d}\right) $ and $\left( Y,\psi ,\mathbb{Z}^{d}\right) $ are bounded orbit
injection equivalent if and only if both $C(X)\rtimes _{\varphi }\mathbb{Z}%
^{d}$ and $C(Y)\rtimes _{\psi }\mathbb{Z}^{d}$ can be embedded as corners in 
$C(Z)\rtimes _{\alpha }\mathbb{Z}^{d}$ for some minimal free Cantor system $%
\left( Z,\alpha ,\mathbb{Z}^{d}\right) $, and the images by these embeddings
of both $C(X)$ and $C(Y)$ are subalgebras of $C(Z)$. We note that this
result is new even in the case of actions of $\mathbb{Z}$ as a
characterization of flip-Kakutani of minimal free Cantor systems. The proof
of this first result occupies our first section. It partly relies upon
techniques inspired by \cite[Theorem 2.6]{Putnam95}, as well as other tools
such as spectral decomposition of C*-crossed-products.

\bigskip We show in Theorem \ref{thm:iso} that for a minimal Cantor system $%
\left( X,\varphi ,\mathbb{Z}^{d}\right) $ the ordered group $\left( G\left(
\varphi \right) ,G\left( \varphi \right) _{+}\right) $ where

\begin{eqnarray*}
G\left( \varphi \right) &=&C\left( X,\mathbb{Z}\right) /\langle f-f\varphi
^{v}:v\in \mathbb{Z}^{d}\rangle \\
G\left( \varphi \right) _{+} &=&\left\{ \left[ f\right] :f\left( x\right)
\geq 0\text{ for all }x\in X\right\}
\end{eqnarray*}%
is an invariant of bounded orbit injection equivalence. In the case when $%
\varphi $ is a $\mathbb{Z}$-action, by the Pimsner-Voiculescu six term exact
sequence \cite{Blackadar98} this is the $K_{0}$-group of the C*-algebra $%
C(X)\rtimes _{\varphi }\mathbb{Z}$. We show that $\left( G\left( \varphi
\right) ,G\left( \varphi \right) _{+}\right) $ shares many properties with
simple dimension groups. However, as is shown in \cite{Matui08}, this group
may contain torsion even when $d=2$. The connection between the ordered
group $\left( G\left( \varphi \right) ,G\left( \varphi \right) _{+}\right) $
and the C*-algebra $C(X)\rtimes _{\varphi }\mathbb{Z}^{d}$ for $d>1$ is
unclear (see discussion in \cite{PutnamSurvey}\cite{Phillips05}). Thus while
it is not particularly surprising that it is an invariant of bounded orbit
injection equivalence, it is interesting that by the aforementioned results
it is an invariant of this strengthened notion of Reiffel-Morita equivalence
for $C(X)\rtimes _{\varphi }\mathbb{Z}^{d}$.

\bigskip We show that given bounded orbit injections $\theta _{1}$, $\theta
_{2}$ from two minimal Cantor systems $\left( X,\varphi ,\mathbb{Z}%
^{d}\right) $ and $\left( Y,\psi ,\mathbb{Z}^{d}\right) $ into a third $%
\left( Z,\alpha ,\mathbb{Z}^{d}\right) $, the question of whether the system 
$\left( Z,\alpha ,\mathbb{Z}^{d}\right) $ may be omitted turns on the nature
of the isomorphisms $h_{\theta _{1}}$ and $h_{\theta _{2}}$ from $\left(
G\left( \varphi \right) ,G\left( \varphi \right) _{+}\right) $ and $\left(
G\left( \psi \right) ,G\left( \psi \right) _{+}\right) $ into $\left(
G\left( \alpha \right) ,G\left( \alpha \right) _{+}\right) $ induced by the
orbit injections, or if one prefers, the nature of the isomorphism $%
h=h_{\theta _{2}}^{-1}h_{\theta _{1}}:\left( G\left( \varphi \right)
,G\left( \varphi \right) _{+}\right) \rightarrow \left( G\left( \psi \right)
,G\left( \psi \right) _{+}\right) $. Specifically, letting $\left[ 1_{X}%
\right] $, $\left[ 1_{Y}\right] $ represent the equivalence classes of the
constant functions $1$ on the spaces $X,Y$, if $h\left[ 1_{X}\right] =\left[
1_{Y}\right] $ then we show that there is a bounded orbit equivalence from $%
\left( X,\varphi ,\mathbb{Z}^{d}\right) $ to $\left( Y,\psi ,\mathbb{Z}%
^{d}\right) $. If $h\left[ 1_{X}\right] -\left[ 1_{Y}\right] $ is an
infinitesimal element then we show that the orbit injections can be modified
to a (not necessarily bounded)\ orbit equivalence from $\left( X,\varphi ,%
\mathbb{Z}^{d}\right) $ to $\left( Y,\psi ,\mathbb{Z}^{d}\right) $. Finally,
if $\left[ 1_{Y}\right] -h\left[ 1_{X}\right] \in G\left( \psi \right) _{+}$
then there is a bounded orbit injection from $X$ to $Y$.

\bigskip As is the case in \cite{Lightwood07, Putnam09}, partitions
associated with tilings are a key tool for proving results about $\mathbb{Z}%
^{d}$-dynamics and we use Voronoi tilings for our results about the group $%
\left( G\left( \varphi \right) ,G\left( \varphi \right) _{+}\right) $. In
particular, the notion of a Voronoi-Rohlin partition serves as a $\mathbb{Z}%
^{d}$ version of the tower partitions used in \cite[Theorem 2.6]{Putnam95}.

\begin{acknowledgement}
We wish to thank Alvaro Arias, Thierry Giordano, Hiroki Matui, Ian Putnam,
and Christian Skau for helpful discussions.
\end{acknowledgement}

\section{C*-algebraic Characterization of Orbit Injection Equivalence}

\bigskip This section establishes the characterization of bounded orbit
injection equivalence in terms of Rieffel-Morita equivalence of
C*-algebras.\ Given a dynamical system $\left( X,\varphi ,\mathbb{Z}%
^{d}\right) $ on a compact space $X$, the C*-crossed-product $C(X)\rtimes
_{\varphi }\mathbb{Z}^{d}$ is the universal C*-algebra generated by a copy
of $C(X)$ and a family $\left( U_{\varphi }^{z}\right) _{z\in \mathbb{Z}%
^{d}} $ of unitary operators satisfying the relations $U_{\varphi
}^{z}fU_{\varphi }^{-z}=f\circ \varphi ^{-z}$, $U_{\varphi }^{z}U_{\varphi
}^{z^{\prime }}=U_{\varphi }^{z+z^{\prime }}$ and $U_{\varphi }^{0}=1$ for
all $z,z^{\prime }\in \mathbb{Z}^{d}$ and $f\in C(X)$. The construction of
this C*-algebra is detailed in \cite{Zeller-Meier68},\cite{Pedersen79} and 
\cite{Williams07}. Note that in this paper, we shall follow the convention
that $\varphi ^{z}$ is the homeomorphism given by the action $\varphi $ on $%
X $ at $z\in \mathbb{Z}^{d}$, and the associated unitary in the
crossed-product $C(X)\rtimes _{\varphi }\mathbb{Z}^{d}$, which we will call
the canonical unitary for $\varphi $ at $z$, is denoted by $U_{\varphi }^{z}$%
.

\bigskip This section generalizes methods developed in \cite{Putnam95} to
the case of $\mathbb{Z}^{d}$ actions. An important tool in \cite{Putnam95}
is the description of normalizers of the C*-subalgebra $C(X)$ in the
crossed-product $C(X)\rtimes \mathbb{Z}$. We shall also need such a
description, but in addition we need to make sure that the normalizers we
will encounter form a group isomorphic to $\mathbb{Z}^{d}$ --- something
which is automatic when $d=1$ but needs some efforts for the general case we
study in this paper. In general, the generalization from actions of $\mathbb{%
Z}$ to actions of $\mathbb{Z}^{d}$ involve various technical points which we
emphasize in the proofs of this section.

\bigskip As a first step, we shall establish the following proposition,
which provides a mean to embed a crossed-product into another given a
bounded orbit injection:

\begin{proposition}
\label{FundationL2}Let $X$ and $Z$ be Cantor sets. Let $\left( X,\varphi ,%
\mathbb{Z}^{d}\right) $ and $\left( Z,\psi ,\mathbb{Z}^{d}\right) $ be two
free dynamical systems and $\theta :X\rightarrow Z$ a bounded orbit
injection with unique cocycle $\eta $. Then the projections defined for $%
z,h\in \mathbb{Z}^{d}$ and $y\in Z$ by:%
\begin{equation}
p_{h}^{z}(y)=\left\{ 
\begin{array}{c}
1\text{ if }\exists x,x^{\prime }\in X\ \ \ y=\theta (x)\text{ and }%
x=\varphi ^{z}(x^{\prime })\text{ and }y=\psi ^{h}\left( \theta \left(
x^{\prime }\right) \right) \text{,} \\ 
0\text{ otherwise.}%
\end{array}%
\right.  \label{ProjDef}
\end{equation}%
satisfy:

\begin{itemize}
\item For $z,h\in \mathbb{Z}^{d}$, the projection $p_{h}^{z}$ is in $C(Z)$,
hence in $C(Z)\rtimes _{\psi }\mathbb{Z}^{d}$,

\item For $z\in \mathbb{Z}^{d}$ and $h\not=h^{\prime }\in \mathbb{Z}^{d}$ we
have $p_{h}^{z}p_{h^{\prime }}^{z}=0$,

\item For $z\in \mathbb{Z}^{d}$ the set $\left\{ h\in \mathbb{Z}%
^{d}:p_{h}^{z}\not=0\right\} $ is finite,

\item The set $\theta (X)$ is clopen in $Z$,

\item If $p$ is the projection on $\theta (X)$ i.e. is the multiplication
operator by the indicator of $\theta (X)$ in $Z$, then for all $z\in \mathbb{%
Z}^{d}$ we have:%
\begin{equation*}
\sum_{h\in \mathbb{Z}^{d}}p_{h}^{z}=p\text{,}
\end{equation*}

\item For $z,z^{\prime },h\in \mathbb{Z}^{d}$ we have:%
\begin{equation*}
p_{h}^{z+z^{\prime }}=\sum_{h^{\prime }\in \mathbb{Z}^{d}}p_{h^{\prime
}}^{z}\cdot p_{h-h^{\prime }}^{z^{\prime }}\circ \psi ^{-h^{\prime }}\text{,}
\end{equation*}
\end{itemize}

Consequently, setting for $z\in \mathbb{Z}^{d}$:%
\begin{equation*}
V^{z}=\sum_{h\in \mathbb{Z}^{d}}p_{h}^{z}U_{\psi }^{h}+\left( 1-p\right)
\end{equation*}%
then the map $z\in \mathbb{Z}^{d}\mapsto V^{z}$ is a group isomorphism into
the unitary group in $C\left( Z\right) \rtimes _{\psi }\mathbb{Z}^{d}$.
Moreover, $V^{z}p=pV^{z}$ for all $z\in \mathbb{Z}^{d}$.
\end{proposition}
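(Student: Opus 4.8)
The plan is to verify, in order, five things: (i) each $V^{z}$ commutes with $p$; (ii) $V^{0}=1$; (iii) $z\mapsto V^{z}$ is multiplicative; (iv) each $V^{z}$ is unitary; and (v) the map is injective, so that it is a group isomorphism onto its image in the unitary group. The single computational engine throughout is the covariance relation $U_{\psi }^{h}f=\left( f\circ \psi ^{-h}\right) U_{\psi }^{h}$ for $f\in C(Z)$, which lets me push each $U_{\psi }^{h}$ past a coefficient in $C(Z)$ and thereby collect products of the summands $p_{h}^{z}U_{\psi }^{h}$ into a single $C(Z)$-coefficient times one canonical unitary.

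For (i) I would first record two elementary consequences of the definition (\ref{ProjDef}): since $\sum_{h}p_{h}^{z}=p$ with each $p_{h}^{z}$ a subprojection of $p$, one has $(1-p)p_{h}^{z}=0$; and since any $y$ in the support of $p_{h}^{z}$ satisfies $\psi ^{-h}(y)=\theta (x^{\prime })\in \theta (X)$, one has $p_{h}^{z}\cdot (p\circ \psi ^{-h})=p_{h}^{z}$. The covariance relation then gives $p_{h}^{z}U_{\psi }^{h}p=p_{h}^{z}(p\circ \psi ^{-h})U_{\psi }^{h}=p_{h}^{z}U_{\psi }^{h}$, so that both $pV^{z}$ and $V^{z}p$ reduce to $\sum_{h}p_{h}^{z}U_{\psi }^{h}$. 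For (ii), freeness of $\psi$ forces $p_{h}^{0}=0$ for $h\neq 0$ and $p_{0}^{0}=p$, whence $V^{0}=p+(1-p)=1$.

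For (iii) I would expand $V^{z}V^{z^{\prime }}$ into four terms: the two cross terms involving $(1-p)$ vanish by the identities from (i), and $(1-p)^{2}=1-p$ cancels against them. Applying covariance to the main term produces $\sum_{h,h^{\prime }}p_{h}^{z}(p_{h^{\prime }}^{z^{\prime }}\circ \psi ^{-h})U_{\psi }^{h+h^{\prime }}$, and re-indexing by $H=h+h^{\prime }$ the coefficient of $U_{\psi }^{H}$ becomes $\sum_{h}p_{h}^{z}\cdot (p_{H-h}^{z^{\prime }}\circ \psi ^{-h})$, which is precisely the right-hand side of the convolution identity $p_{H}^{z+z^{\prime }}=\sum_{h}p_{h}^{z}\cdot p_{H-h}^{z^{\prime }}\circ \psi ^{-h}$ listed above; hence $V^{z}V^{z^{\prime }}=V^{z+z^{\prime }}$. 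For unitarity in (iv) I compute $(V^{z})^{\ast }=\sum_{h}U_{\psi }^{-h}p_{h}^{z}+(1-p)$ and, after covariance and the substitution $h\mapsto -h$, identify it with $V^{-z}$ by checking the symmetry identity $p_{h}^{-z}=p_{-h}^{z}\circ \psi ^{-h}$; combined with (ii) and (iii) this yields $V^{z}(V^{z})^{\ast }=V^{z}V^{-z}=V^{0}=1$, and symmetrically on the other side.

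Finally, for injectivity (v), if $V^{w}=1$ then applying the canonical faithful conditional expectation $E\colon C(Z)\rtimes _{\psi }\mathbb{Z}^{d}\to C(Z)$ to $V^{w}U_{\psi }^{-h}$ reads off the $h$-th Fourier coefficient $p_{h}^{w}$, forcing $p_{h}^{w}=0$ for $h\neq 0$ and $p_{0}^{w}=p$; the latter, with injectivity of $\theta$ and freeness of $\varphi$, gives $\varphi ^{-w}=\mathrm{id}$ and hence $w=0$. All of the bookkeeping above is routine once the six listed properties are available; the one genuinely delicate verification I expect to be the main obstacle is the symmetry identity $p_{h}^{-z}=p_{-h}^{z}\circ \psi ^{-h}$ needed for unitarity, which requires carefully unwinding (\ref{ProjDef}) and swapping the roles of $x$ and $x^{\prime }$, together with the support fact underlying (i). The only point where structure of the crossed product beyond covariance is invoked is the faithfulness of $E$ in step (v).
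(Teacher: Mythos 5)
There is a genuine gap: the six itemized properties of the projections $p_{h}^{z}$ are part of the proposition's \emph{conclusion}, not hypotheses, and your proposal explicitly treats them as ``available'' rather than proving them. They are where the dynamical hypotheses do all the work, and they occupy the bulk of the paper's proof. Concretely, you never show: that $\theta (X)$ is clopen (continuity of $\theta $ plus compactness of $X$ make it a homeomorphism onto a closed image, openness being assumed); that $p_{h}^{z}\in C(Z)$ --- this is exactly where \emph{boundedness} of the orbit injection enters, since the support of $p_{h}^{z}$ is $\theta \left( \left( \eta \left( \cdot ,z\right) \right) ^{-1}\left( \left\{ h\right\} \right) \right) $, clopen precisely because the cocycle $\eta $ is continuous; that the $p_{h}^{z}$ are pairwise orthogonal (injectivity of $\theta $ plus freeness of $\psi $); that only finitely many $p_{h}^{z}$ are nonzero (compactness of $\theta (X)$ covered by the disjoint clopen supports) --- without this, the sum defining $V^{z}$ is not even known to be an element of $C(Z)\rtimes _{\psi }\mathbb{Z}^{d}$; that $\sum_{h}p_{h}^{z}=p$; and, most importantly, the convolution identity $p_{h}^{z+z^{\prime }}=\sum_{h^{\prime }}p_{h^{\prime }}^{z}\cdot p_{h-h^{\prime }}^{z^{\prime }}\circ \psi ^{-h^{\prime }}$, a genuine cocycle computation which is precisely what powers your step (iii). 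Contrary to your closing remark, the delicate content of the proposition is not the symmetry identity but these six facts; as written, your argument establishes only their corollary.

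That said, the portion you do prove is correct, and in places more complete than the paper's own treatment of it: after establishing $V^{z}V^{z^{\prime }}=V^{z+z^{\prime }}$ and $V^{0}=1$, the paper simply asserts that $z\mapsto V^{z}$ lands in the unitary group and is an isomorphism, whereas you actually verify unitarity via the symmetry identity $p_{h}^{-z}=p_{-h}^{z}\circ \psi ^{-h}$ (which does hold --- unwinding (\ref{ProjDef}) and exchanging the roles of $x$ and $x^{\prime }$ gives exactly it), and you verify injectivity by reading off Fourier coefficients with the canonical conditional expectation and then invoking injectivity of $\theta $ and freeness of $\varphi $. Your steps (i)--(iii) coincide with the paper's computation. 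One small quibble: faithfulness of $\mathbb{E}$ is never used in your step (v); all you need is the formula $\mathbb{E}\left( fU_{\psi }^{g}\right) =\delta _{g,0}\,f$, applied to $V^{w}U_{\psi }^{-h}$. To turn the proposal into a proof of the proposition as stated, you must supply the arguments for the six bulleted properties along the lines indicated above.
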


\begin{proof}
We denote $\theta (X)\ $by $X^{\prime }$. By assumption, $\theta $ is a
continuous bijection from $X$ onto $X^{\prime }$. Since $X$ is compact, $%
\theta $ is in fact a homeomorphism from $X$ onto $X^{\prime }$ for the
relative topology on $X^{\prime }$. Moreover, as the continuous image of a
compact set, $X^{\prime }$ is closed in $Z$. Since by assumption on $\theta $
the set $X^{\prime }$ is also open in $Z$, we conclude that $\theta (X)$ is
clopen in $Z$.

For all $y\in Z$ and $z,h\in \mathbb{Z}^{d}$, we define $p_{h}^{z}(y)$ by
Identity (\ref{ProjDef}) and we note that the support $X_{h}^{z}$ of $%
p_{h}^{z}$ is the image by $\theta $ of set $P_{h}^{z}=\left( \eta \left(
\cdot ,z\right) \right) ^{-1}\left( \left\{ h\right\} \right) $, the latter
being clopen in $X$ since $\eta $ is continuous. Since $X^{\prime }$ is
clopen in $Z$ and $\theta $ is a homeomorphism from $X$ onto $X^{\prime }$
we deduce that $X_{h}^{z}$ is clopen in $Z$. Hence $p_{h}^{z}$ is a
continuous function on $Z$. We now establish the properties of the lemma.

Assume $X_{h}^{z}\cap X_{h^{\prime }}^{z}\not=\emptyset $ for some $%
z,h,h^{\prime }\in \mathbb{Z}^{d}$. Let $y\in X_{h}^{z}\cap X_{h^{\prime
}}^{z}$. Then by definition, there exists $x,x^{\prime },x^{\prime \prime
},x^{\prime \prime \prime }\in X$ such that $y=\theta (x)=\theta (x^{\prime
\prime })$, with $x=\varphi ^{z}(x^{\prime })$ and $x^{\prime \prime
}=\varphi ^{z}(x^{\prime \prime \prime })$ and $y=\psi ^{h}\left( \theta
(x^{\prime })\right) =\psi ^{h^{\prime }}\left( \theta (x^{\prime \prime
\prime })\right) $. Since $\theta $ is injective, $x=x^{\prime \prime }$.
Since $\varphi ^{z}$ is a homeomorphism, we have $x^{\prime }=x^{\prime
\prime \prime }$. Since the action by $\psi $ is free, $h=h^{\prime }$.
Thus, if $h\not=h^{\prime }$ then $X_{h}^{z}\cap X_{h^{\prime
}}^{z}=\emptyset $.

Moreover, let $y\in X^{\prime }$. By definition, there exists (a unique) $%
x\in X$ such that $y=\theta (x)$. Set $x^{\prime }=\varphi ^{-z}(x)$. Since $%
\theta $ is an orbit injection with cocycle $\eta $, we have:%
\begin{equation*}
\psi ^{\eta (x,z)}(\theta (x^{\prime }))=\theta \left( \varphi
^{z}(x^{\prime })\right) =\theta (x)=y
\end{equation*}%
by Definition (\ref{OrbitInjection}). Hence by definition $y\in X_{\eta
(x,z)}^{z}$. Conversely, if $y\in X_{h}^{z}$ then by definition $y\in
X^{\prime }$. Thus for any $z\in \mathbb{Z}^{d}$ the set $X^{\prime }$ is
the union of the clopen sets $X_{h}^{z}$ for $h\in \mathbb{Z}^{d}$ and thus $%
X^{\prime }$ itself is clopen. Furthermore, as $X^{\prime }$ is closed in
the compact set $X$ we conclude that $X^{\prime }$ is compact. Thus for any $%
z\in \mathbb{Z}^{d}$, if $\mathcal{P}_{z}=\left\{ h\in \mathbb{Z}%
^{d}:X_{h}^{z}\not=\emptyset \right\} $ then the family $\left(
X_{h}^{z}\right) _{h\in \mathcal{P}_{z}}$ is a partition of the compact $%
X^{\prime }$ by open subsets, so it is finite. Hence $\mathcal{P}_{z}$,
which equals $\left\{ h\in \mathbb{Z}^{d}:p_{h}^{z}\not=0\right\} $ by
definition, is finite as claimed.

We thus have proven that for a fixed $z\in \mathbb{Z}^{d}$, the projections $%
p_{h}^{z}$ ($h\in \mathbb{Z}^{d}$) are pairwise orthogonal and that $%
\sum_{h\in \mathbb{Z}^{d}}p_{h}^{z}=p$ where $p$ is the projection on $%
X^{\prime }$ i.e. the indicator function of $X^{\prime }$ in $C\left(
Z\right) $ (note that $p\in C(Z)$ since $X^{\prime }$ is clopen in $Z$) --
and where only finitely many terms in the sum are nonzero. We now prove a
natural convolution product relates the projections $p_{h}^{z}$ for varying $%
z\in \mathbb{Z}^{d}$.

Let $z,z^{\prime },h\in \mathbb{Z}^{d}$. Let $y\in Z$. Then $%
p_{h}^{z+z^{\prime }}(y)=1$ if and only if there exists $x,x^{\prime }\in X$
such that:%
\begin{equation}
y=\theta (x)\text{, }x=\varphi ^{z+z^{\prime }}(x^{\prime })\text{ and }%
y=\psi ^{h}\left( \theta \left( x^{\prime }\right) \right) \text{.}
\label{convolution_0}
\end{equation}%
Since $x=\varphi ^{z}\left( \varphi ^{z^{\prime }}(x^{\prime })\right) $, by
assumption on $\theta $, there exists $h^{\prime }=\eta \left( \varphi
^{z^{\prime }}(x^{\prime }),z\right) \in \mathbb{Z}^{d}$ such that $y=\psi
^{h^{\prime }}\left( \theta \left( \varphi ^{z^{\prime }}\left( x^{\prime
}\right) \right) \right) $. In summary, for some $h^{\prime }\in \mathbb{Z}%
^{d}$:%
\begin{equation}
y=\psi ^{h^{\prime }}\left( \theta \left( \varphi ^{z^{\prime }}\left(
x^{\prime }\right) \right) \right) \text{ and }x=\varphi ^{z}\left( \varphi
^{z^{\prime }}(x^{\prime })\right) \text{.}  \label{convolution_1}
\end{equation}%
Hence by definition, $p_{h^{\prime }}^{z}(y)=1$. Note moreover that $%
h^{\prime }$ is unique, since $p_{h^{\prime }}^{z}(y)=1$ implies that $%
p_{h^{\prime \prime }}^{z}(y)=0$ for all $h^{\prime \prime }\not=h^{\prime }$%
.

Furthermore, since $\psi ^{-h^{\prime }}=\left( \psi ^{h^{\prime }}\right)
^{-1}$ we have from the first equality in (\ref{convolution_1}):%
\begin{equation*}
\psi ^{-h^{\prime }}\left( y\right) =\theta \left( \varphi ^{z^{\prime
}}(x^{\prime })\right)
\end{equation*}%
and from the last equality in (\ref{convolution_0}):%
\begin{equation*}
\psi ^{-h^{\prime }}(y)=\psi ^{h-h^{\prime }}\left( \theta \left( x^{\prime
}\right) \right)
\end{equation*}%
and therefore by definition:%
\begin{equation*}
p_{h-h^{\prime }}^{z^{\prime }}\left( \psi ^{-h^{\prime }}\left( y\right)
\right) =1\text{.}
\end{equation*}%
Hence we obtain the desired formula:%
\begin{equation}
p_{h}^{z+z^{\prime }}(y)=\sum_{h^{\prime }\in \mathbb{Z}^{d}}p_{h^{\prime
}}^{z}(y)\left( p_{h-h^{\prime }}^{z^{\prime }}\circ \psi ^{-h^{\prime
}}\left( y\right) \right) \text{,}  \label{convolution_p}
\end{equation}%
noting that for any $y\in X$ only one term at most in the sum is nonzero.

We can now define for all $z\in \mathbb{Z}^{d}$ the following operator:%
\begin{equation*}
V^{z}=\sum_{h\in \mathbb{Z}^{d}}p_{h}^{z}U_{\psi }^{h}+\left( 1-p\right) \in
C\left( Z\right) \rtimes _{\psi }\mathbb{Z}^{d}\text{,}
\end{equation*}%
where again the sum is over only finitely many nonzero terms.

Now, let $y\in Z\backslash X^{\prime }$ and let $z,h\in \mathbb{Z}^{d}$. We
compute:%
\begin{eqnarray*}
\left( p_{h}^{z}U_{\psi }^{h}\left( 1-p\right) \right) ^{\ast }\left(
p_{h}^{z}U_{\psi }^{h}\left( 1-p\right) \right) &=&\left( 1-p\right) \left(
p_{h}^{z}\circ \psi ^{h}\right) (1-p) \\
&=&\left( 1-p\right) \left( p_{h}^{z}\circ \psi ^{h}\right) \text{.}
\end{eqnarray*}%
Yet if $p_{h}^{z}\circ \psi ^{h}(y)=1$ then by definition there exists $%
x,x^{\prime }\in X^{\prime }$ such that $\psi ^{h}(y)=\theta (x)$, $%
x=\varphi ^{z}(x^{\prime })$ and $\psi ^{-h}\left( \psi ^{h}(y)\right)
=\theta (x^{\prime })\in X^{\prime }$. This last equation forces $y\in
X^{\prime }$ and thus $\left( 1-p\right) (y)=0$. Hence: 
\begin{equation}
p_{h}^{z}U_{\psi }^{h}\left( 1-p\right) =0\text{.}  \label{convolution_2}
\end{equation}

By the convolution formula (\ref{convolution_p}), Equality (\ref%
{convolution_2}) and the trivial observation:%
\begin{equation*}
\left( \left( 1-p\right) p_{h}^{z}\right) (y)=0
\end{equation*}%
for all $z,h\in \mathbb{Z}^{d}$ and $y\in Z$, we conclude:%
\begin{eqnarray*}
V^{z}V^{z^{\prime }} &=&\sum_{h\in \mathbb{Z}^{d}}\sum_{h^{\prime }\in 
\mathbb{Z}^{d}}p_{h}^{z}U_{\psi }^{h}p_{h^{\prime }}^{z^{\prime }}U_{\psi
}^{h^{\prime }}+\left( 1-p\right) \text{ yet }U_{\psi }^{h^{\prime
}}=U_{\psi }^{-h}U_{\psi }^{h+h^{\prime }}\text{ so:} \\
V^{z}V^{z^{\prime }} &=&\sum_{h\in \mathbb{Z}^{d}}\sum_{h^{\prime }\in 
\mathbb{Z}^{d}}\left( p_{h}^{z}p_{h^{\prime }}^{z^{\prime }}\circ \psi
^{-h}\right) U_{\psi }^{h+h^{\prime }}+\left( 1-p\right) \\
&=&\sum_{h^{\prime \prime }\in \mathbb{Z}^{d}}\left( \sum_{h\in \mathbb{Z}%
^{d}}p_{h}^{z}p_{h^{\prime \prime }-h}^{z^{\prime }}\circ \psi ^{-h}\right)
U_{\psi }^{h^{\prime \prime }}+\left( 1-p\right) \text{ where }h^{\prime
\prime }=h+h^{\prime } \\
&=&\sum_{h^{\prime \prime }\in \mathbb{Z}^{d}}p_{h^{\prime \prime
}}^{z+z^{\prime }}U_{\psi }^{h^{\prime \prime }}+\left( 1-p\right)
=V^{z+z^{\prime }}\text{.}
\end{eqnarray*}%
Moreover by construction $V^{0}=p+1-p=1$. Hence $z\mapsto V^{z}$ is a
morphism from $\mathbb{Z}^{d}$ into the unitary group of $C\left( Z\right)
\rtimes _{\psi }\mathbb{Z}^{d}$. Last, we note that Identity (\ref%
{convolution_2}) shows that $p_{h}^{z}U_{\psi }^{h}=p_{h}^{z}U_{\psi }^{h}p$
for all $z,h\in \mathbb{Z}^{d}$, which in turn establishes that for all $%
z\in \mathbb{Z}^{d}$ the unitary $V^{z}$ commutes with $p$. Our proposition
is thus proven.
\end{proof}

\bigskip As a first use of Proposition\ (\ref{FundationL2}), we prove that
the existence of a bounded orbit injection implies Rieffel-Morita
equivalence of crossed-products with an additional property, thus
establishing the necessary condition of our characterization of bounded
orbit injection equivalence:

\begin{proposition}
\label{Morita1}Let $\left( X,\varphi ,\mathbb{Z}^{d}\right) $ and $\left(
Z,\psi ,\mathbb{Z}^{d}\right) $ be two minimal free Cantor systems and let $%
\theta :X\rightarrow Z$ be a bounded orbit injection from $\left( X,\varphi ,%
\mathbb{Z}^{d}\right) $ to $\left( Z,\psi ,\mathbb{Z}^{d}\right) $. Then
there exists a *-monomorphism $\alpha :C(X)\rtimes _{\varphi }\mathbb{Z}%
^{d}\longrightarrow C(Z)\rtimes _{\psi }\mathbb{Z}^{d}$ such that $\alpha
(C(X))\subseteq C(Z)$ and whose range is the corner algebra $p\left(
C(Z)\rtimes _{\psi }\mathbb{Z}^{d}\right) p$ where $p=\alpha (1)$.
\end{proposition}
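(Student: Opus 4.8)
The plan is to realize $\alpha$ through the universal property of the crossed product, using the unitaries $V^z$ produced by Proposition \ref{FundationL2}. Write $A=C(Z)\rtimes_\psi\mathbb{Z}^d$, let $X'=\theta(X)$, and let $p$ be the indicator of the clopen set $X'$. Since $\theta$ is a homeomorphism of $X$ onto $X'$, the map $\pi:C(X)\to C(Z)$ given by $\pi(f)=f\circ\theta^{-1}$ on $X'$ and $0$ off $X'$ is an injective $*$-homomorphism with $\pi(1)=p$ and image $\pi(C(X))=pC(Z)$. Because $V^z$ commutes with $p$ (last assertion of Proposition \ref{FundationL2}), the elements $W^z:=V^z p=pV^z p$ lie in the unital corner $B=pAp$ and satisfy $(W^z)^\ast W^z=W^z(W^z)^\ast=p=1_B$, while $W^z W^{z'}=W^{z+z'}$ and $W^0=p$ follow from $z\mapsto V^z$ being a homomorphism. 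Thus, once the covariance relation $W^z\pi(f)(W^z)^\ast=\pi(f\circ\varphi^{-z})$ is verified, $(\pi,W)$ is a unital covariant representation of $(C(X),\varphi,\mathbb{Z}^d)$ into $B$, and the universal property (recall $\mathbb{Z}^d$ is amenable, so full and reduced crossed products coincide) yields a unital $*$-homomorphism $\alpha:C(X)\rtimes_\varphi\mathbb{Z}^d\to B\subseteq A$ with $\alpha|_{C(X)}=\pi$, $\alpha(U_\varphi^z)=W^z$, and $\alpha(1)=p$.

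The heart of the argument, and the step I expect to be most delicate, is the covariance identity. Since $\pi(f)$ is supported on $X'$ one has $(1-p)\pi(f)=0$, so expanding $V^z\pi(f)(V^z)^\ast$ with $V^z=\sum_h p_h^z U_\psi^h+(1-p)$ and using $U_\psi^h g=(g\circ\psi^{-h})U_\psi^h$ leaves $\sum_{h,h'}p_h^z\,(\pi(f)\circ\psi^{-h})\,(p_{h'}^z\circ\psi^{h'-h})\,U_\psi^{h-h'}$. The plan is to show that the defining description of the supports $X_h^z$ together with freeness of $\psi$ forces every off-diagonal term ($h\neq h'$) to vanish pointwise: if $y\in X_h^z$ and $\psi^{h'-h}y\in X_{h'}^z$, then injectivity of $\theta$ and freeness of $\varphi$ and $\psi$ pin down the relevant preimages and give $\psi^{h'-h}y=y$, hence $h=h'$. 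What survives is $\sum_h p_h^z\,(\pi(f)\circ\psi^{-h})\in C(Z)$, and evaluating at $y=\theta(x)\in X_{\eta(\varphi^{-z}x,z)}^z$ and using $\psi^{-\eta(\varphi^{-z}x,z)}y=\theta(\varphi^{-z}x)$ identifies this with $f(\varphi^{-z}x)=\pi(f\circ\varphi^{-z})(y)$, as required.

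It remains to see that $\alpha$ is a monomorphism onto the corner. Injectivity is immediate once we invoke that a minimal free action of $\mathbb{Z}^d$ on the Cantor set has simple crossed product: $\alpha(1)=p\neq 0$, so $\ker\alpha$ is a proper ideal, hence $0$. For surjectivity onto $B=pAp$, I would argue that the image of $\alpha$, being a C*-subalgebra, contains $pC(Z)=\pi(C(X))$ and all $W^z=V^z p=\sum_h p_h^z U_\psi^h p$. Cutting $W^z$ by the projection $p_w^z\in\pi(C(X))$ isolates $p_w^z U_\psi^w p$ in the image; the orbit-injection property shows that for fixed $w$ the clopen sets $X_w^z$ ($z\in\mathbb{Z}^d$) are pairwise disjoint with union $X'\cap\psi^w(X')$, hence finite in number by compactness, so summing over $z$ gives $\chi_{X'\cap\psi^w(X')}\,U_\psi^w$ in the image. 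Left-multiplying by arbitrary elements of $\pi(C(X))=pC(Z)$ then produces every $pg\,U_\psi^w p$ with $g\in C(Z)$, and since such elements span a dense subspace of the closed algebra $pAp$, we conclude that the image of $\alpha$ equals $p\big(C(Z)\rtimes_\psi\mathbb{Z}^d\big)p$. Combined with $\alpha(C(X))=\pi(C(X))\subseteq C(Z)$, this gives all the assertions of the proposition.
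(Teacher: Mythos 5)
Your proposal is correct and follows essentially the same route as the paper: the same $\pi(f)=f\circ\theta^{-1}$ extended by zero, the unitaries from Proposition \ref{FundationL2} compressed by $p$ to form a covariant pair in the corner, the universal property of the crossed product, injectivity from simplicity/minimality, and surjectivity via the orthogonality and finiteness of the projections $p_h^z$. The only difference is cosmetic: the paper starts from $pU_\psi^h p$ and decomposes it as $\sum_{z,z'}p_h^z\omega^z p_h^{z'}$, whereas you build up from $p_w^z W^z$ and sum over $z$, then pass to the dense span of the elements $pgU_\psi^wp$ --- the same computation read in the opposite direction, with the final generation step that the paper leaves implicit made explicit.
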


\begin{proof}
Let $f\in C(X)$ and $y\in Z$. We set:%
\begin{equation*}
\pi (f)(y)=\left\{ 
\begin{array}{c}
f(x)\text{ if }y=\theta (x)\text{,} \\ 
0\text{ otherwise.}%
\end{array}%
\right.
\end{equation*}%
First, $\pi (f)$ is a well-defined map from $Z$ to $\mathbb{C}$ since $%
\theta $ is injective. Moreover, it is straightforward to check that $\pi
(f) $ is continuous over $Z$ since the range of $\theta $ is clopen by
Proposition (\ref{FundationL2}). Let $V,p$ and $p_{h}^{z}$ ($z,h\in \mathbb{Z%
}^{d}$) be given by by Lemma\ (\ref{FundationL2}), which applies since $%
\theta $ is a continuous orbit injection and both $\varphi $ and $\psi $ are
free by assumption.\ Note that $p$ is the indicator function of $\theta (X)$
in $Z$, so $p=\pi (1)$. Let now $f\in C(X)$ be given. We wish to check that $%
\left( \pi ,V\right) $ is a covariant representation of $\left( X,\varphi ,%
\mathbb{Z}^{d}\right) $ so we compute:%
\begin{eqnarray*}
V^{z}\pi (f)V^{-z} &=&\left( \sum_{h\in \mathbb{Z}^{d}}p_{h}^{z}U_{\psi
}^{h}+(1-p)\right) \pi (f)\left( \sum_{h^{\prime }\in \mathbb{Z}%
^{d}}p_{h^{\prime }}^{z}U_{\psi }^{h^{\prime }}+(1-p)\right) ^{\ast } \\
&=&\sum_{h\in \mathbb{Z}^{d}}\sum_{h^{\prime }\in \mathbb{Z}%
^{d}}p_{h}^{z}U_{\psi }^{h}\pi (f)U_{\psi }^{-h^{\prime }}p_{h^{\prime }}^{z}%
\text{,}
\end{eqnarray*}%
since $\left( 1-p\right) \pi (f)=\pi (f)\left( 1-p\right) =0$. Now, for $%
h,h^{\prime }\in \mathbb{Z}^{d}$ we have:%
\begin{eqnarray}
p_{h}^{z}U_{\psi }^{h}\pi (f)U_{\psi }^{-h^{\prime }}p_{h^{\prime }}^{z}
&=&p_{h}^{z}\left( \pi (f)\circ \psi ^{-h}\right) U_{\psi }^{h-h^{\prime
}}p_{h^{\prime }}^{z}  \notag \\
&=&\left( \pi (f)\circ \psi ^{-h}\right) \left( p_{h}^{z}\left( p_{h^{\prime
}}^{z}\circ \psi ^{h^{\prime }-h}\right) \right) U_{\psi }^{h-h^{\prime }} 
\notag \\
&=&\left\{ 
\begin{array}{c}
0\text{ if }h\not=h^{\prime } \\ 
\pi (f)\circ \psi ^{-h}\text{ if }h=h^{\prime }\text{.}%
\end{array}%
\right.  \label{Morita_id1}
\end{eqnarray}%
Indeed, suppose that $p_{h}^{z}(y)=1$ for some $h\in \mathbb{Z}^{d}$ and $%
y\in Z$. Then by definition, there exists $x,x^{\prime }\in X$ such that $%
y=\theta (x)=\psi ^{h}\left( \theta (x^{\prime })\right) $ and $x=\varphi
^{z}(x^{\prime })$. Then:%
\begin{equation*}
\psi ^{h^{\prime }-h}(y)=\psi ^{h^{\prime }-h}\left( \psi ^{h}\left( \theta
(x^{\prime })\right) \right) =\psi ^{h^{\prime }}\left( \theta (x^{\prime
})\right) \text{.}
\end{equation*}%
Yet if $p_{h^{\prime }}^{z}(\psi ^{h^{\prime }-h}\left( y\right) )=1$ for
some $h^{\prime }\in \mathbb{Z}^{d}$ then there exists $w,w^{\prime }\in X$
such that $\psi ^{h^{\prime }-h}(y)=\theta (w)=\psi ^{h^{\prime }}(\theta
(w^{\prime }))$ and $w=\varphi ^{z}(w^{\prime })$. Yet then since $\psi
^{h^{\prime }}\circ \theta $ is injective, we conclude that $w^{\prime
}=x^{\prime }$ and thus $x=w$ so $\psi ^{h^{\prime }-h}(y)=y$. Hence as $%
\psi $ is free we conclude that $h=h^{\prime }$. Thus $p_{h}^{z}\left(
p_{h^{\prime }}^{z}\circ \psi ^{h^{\prime }-h}\right) (y)$ is $1$ when $%
h=h^{\prime }$ and $0$ otherwise.

Hence by Identity (\ref{Morita_id1}) we have $V^{z}\pi (f)V^{-z}\in C(Z)$
and moreover by construction, if $x\in X$ and $y=\theta (x)$, since $\theta $
is an orbit injection with cocycle $\eta $:%
\begin{eqnarray*}
V^{z}\pi (f)V^{-z}(y) &=&\left( \sum_{h\in \mathbb{Z}^{d}}p_{h}^{z}\pi
(f)\circ \psi ^{-h}\right) (y) \\
&=&p_{\eta (x,z)}^{z}(y)\pi \left( f\circ \varphi ^{-z}\right) (y) \\
&=&\pi \left( f\circ \varphi ^{-z}\right) (y)\text{.}
\end{eqnarray*}%
On the other hand, if $y\not\in \theta (X)$ then:%
\begin{eqnarray*}
V^{z}\pi (f)V^{-z}(y) &=&\left( \sum_{h\in \mathbb{Z}^{d}}p_{h}^{z}\pi
(f)\circ \psi ^{-h}\right) (y) \\
&=&0 \\
&=&\pi (f\circ \varphi ^{-z})(y)
\end{eqnarray*}%
where the last equality follows from $\pi (g)(Z/X^{\prime })=\left\{
0\right\} $ for all $g\in C(X)$ by construction.

Hence the pair $\left( \pi ,V\right) $ is covariant for $\left( X,\varphi ,%
\mathbb{Z}^{d}\right) $. It is however degenerate, and its integrated
*-morphism is actually valued in the corner $p\left( C(Z)\rtimes _{\psi }%
\mathbb{Z}^{d}\right) p$. It will be convenient to work explicitly in this
corner. We note that for all $f\in C(X)$ we have $\pi (f)\in pC(Z)p=C\left(
\theta (X)\right) $ and $p$ commutes with $V^{z}$ for all $z\in \mathbb{Z}%
^{d}$. Hence, setting $\pi ^{\prime }=p\pi \left( \cdot \right) p$ and $%
\omega ^{g}=pV^{g}p$ for $g\in \mathbb{Z}^{d}$ we define a nondegenerate
covariant pair $\left( \pi ^{\prime },\omega \right) $ valued in $p\left(
C(Z)\rtimes _{\psi }\mathbb{Z}^{d}\right) p$. By \cite[Proposition 2.39]%
{Williams07} there exists a unique *-morphism $\alpha $ such that for all $%
f\in C(X)$ we have $\alpha (f)=\pi ^{\prime }(f)\in pC(Z)p$ and for all $%
z\in \mathbb{Z}^{d}$ we have $\alpha (U_{\varphi }^{z})=\omega ^{z}$. Since $%
(X,\varphi ,\mathbb{Z}^{d})$ is minimal, the morphism $\alpha $ is
injective. We now investigate the range of $\alpha $. First by construction,
the range of $\alpha $ is a C*-subalgebra of $p(C(Z)\rtimes _{\psi }\mathbb{Z%
}^{d})p$ and $\pi ^{\prime }(1)=\alpha (1)=p$. Second, $\pi ^{\prime
}=\alpha _{|C(X)}$ is a *-isomorphism onto $C\left( \theta (X)\right)
\subseteq C(Z)$. Now, let $h\in \mathbb{Z}^{d}$. Set $X_{h}=\left\{ x\in
X^{\prime }:\psi ^{h}(x)\in X^{\prime }\right\} $ and note that $X_{h}$ is
clopen in $X^{\prime }$. Let $q_{h}$ be the indicator function of $X_{h}$ in 
$C(Z)$. Now:%
\begin{equation*}
\left( \left( p-q_{h}\right) U_{\psi }^{h}\left( p-q_{h}\right) \right)
^{\ast }\left( \left( p-q_{h}\right) U_{\psi }^{h}\left( p-q_{h}\right)
\right) =\left( p-q_{h}\right) \cdot \left( p-q_{h}\right) \circ \psi ^{h}%
\text{.}
\end{equation*}%
Note that $p-q_{h}$ is a projection (since $q_{h}$ is a subprojection of $p$%
). Now, if $\left( p-q_{h}\right) (x)=1$ then $x\in \theta (X)$ and $\psi
^{h}(x)\not\in \theta (X)$ and thus $\left( p-q_{h}\right) (\psi ^{h}(x))=0$%
. Hence $\left( p-q_{h}\right) U_{\psi }^{h}\left( p-q_{h}\right) =0$.
Similarly: 
\begin{equation*}
\left( q_{h}U_{\psi }^{h}\left( p-q_{h}\right) \right) ^{\ast }\left(
q_{h}U_{\psi }^{h}\left( p-q_{h}\right) \right) =\left( p-q_{h}\right) \cdot
q_{h}\circ \psi ^{h}=0
\end{equation*}%
and thus $q_{h}U_{\psi }^{h}\left( p-q_{h}\right) =0$. We would prove $%
\left( p-q_{h}\right) U_{\psi }^{h}q_{h}=0$ the same way. We conclude:%
\begin{equation*}
pU_{\psi }^{h}p=q_{h}U_{\psi }^{h}q_{h}\text{.}
\end{equation*}%
On the other hand, let $x\in X_{h}$ and let $w\in X$ be the unique element
such that $x=\theta (w)$. Since $\theta $ is an orbit injection, we conclude
that there exists $k\in \mathbb{Z}^{d}$ such that $\theta (\varphi
^{-k}(w))=\psi ^{-h}(x)$, i.e. $p_{h}^{k}(x)=1$. Since $\theta $, $\psi $
and $\varphi $ are free we conclude that if $k^{\prime }\not=k$ then $%
p_{h}^{k^{\prime }}(x)=0$: indeed, assume there exists $k^{\prime }\in 
\mathbb{Z}^{d}$, $w^{\prime },w^{\prime \prime }\in X$ such that $x=\theta
(w^{\prime })$, $w^{\prime }=\varphi ^{k^{\prime }}(w^{\prime \prime })$ and 
$x=\psi ^{h}\left( \theta \left( w^{\prime \prime }\right) \right) $. Then
since $\theta $ is injective, $w^{\prime }=w$. Since $\psi ^{h}$ and $\theta 
$ are injective, $w^{\prime \prime }=\varphi ^{-k}(w)$. Hence $\varphi
^{-k^{\prime }}(w)=\varphi ^{-k}(w)$ and since $\varphi $ is free we
conclude that $k=k^{\prime }$. Hence the projections $p_{h}^{j}$ for $j\in 
\mathbb{Z}^{d}$ are pairwise orthogonal and thus the sets $X_{h}^{k}$ ($k\in 
\mathbb{Z}^{d}$) are disjoint.

Since $X_{h}$ is compact and $X_{h}$ is the disjoint union of the clopen
subsets $X_{h}^{k}$ ($k\in \mathbb{Z}^{d}$) of $X_{h}$, we also conclude
that the set $\left\{ k\in \mathbb{Z}^{d}:p_{h}^{k}\not=0\right\} $ is
finite. Therefore we can write:%
\begin{equation*}
q_{h}=\sum_{k\in \mathbb{Z}^{d}}p_{h}^{k}\in C(Z)\text{.}
\end{equation*}

Now, for all $z,z^{\prime }\in \mathbb{Z}^{d}$ we have by construction: $%
p_{h}^{z}\omega ^{z}p_{h}^{z^{\prime }}=p_{h}^{z}U_{\psi
}^{h}p_{h}^{z^{\prime }}$ so (noting all the sums are finite):%
\begin{eqnarray*}
pU_{\psi }^{h}p &=&q_{h}U_{\psi }^{h}q_{h}=\sum_{z\in \mathbb{Z}%
^{d}}\sum_{z^{\prime }\in \mathbb{Z}^{d}}p_{h}^{z}U_{\psi
}^{j}p_{h}^{z^{\prime }} \\
&=&\sum_{z\in \mathbb{Z}^{d}}\sum_{z^{\prime }\in \mathbb{Z}%
^{d}}p_{h}^{z}\omega ^{z}p_{h}^{z^{\prime }}\in \func{ran}\alpha \text{.}
\end{eqnarray*}%
Hence the range of $\alpha $ is $p(C(Z)\rtimes _{\psi }\mathbb{Z}^{d})p$ as
claimed.
\end{proof}

\bigskip We now proceed to prove the converse of Proposition (\ref{Morita1})
and thus establish our main theorem for this section. We use the following
notations. Let $G$ be an Abelian discrete group and let $\varphi $ be an
action of $G$ on a compact space $X$. We denote the Pontryagin dual of $G$
by $\widehat{G}$. We define the dual action $\gamma $ of the compact group $%
\widehat{G}$ on $C(X)\rtimes _{\varphi }G$ as the unique action such that
for all $f\in C(X)$,$\chi \in \widehat{G}$ and $g\in G$ we have $\gamma
^{\chi }(f)=f$ and $\gamma ^{\chi }(U^{g})=\chi (g)U^{g}$. In this context,
we define, for all $a\in C(X)\rtimes _{\varphi }G$:%
\begin{equation}
\mathbb{E}(a)=\int_{G}\gamma ^{\chi }(a)d\mu (\chi )  \label{Expectation}
\end{equation}%
where $\mu $ is the Haar probability measure on the compact group $\widehat{G%
}$. It is standard that $\mathbb{E}$ is a conditional expectation from $%
C(X)\rtimes _{\varphi }G$ onto the fixed point of $\gamma $ which is $C(X)$.
We refer to \cite{Zeller-Meier68}, \cite{Pedersen79} or \cite{Williams07}
for the proof of the existence of the strongly continuous action $\gamma $
and its fundamental properties.

We have the following immediate extension of \cite[Lemma 5.1]{Putnam95} from
actions of $\mathbb{Z}$ to actions of $\mathbb{Z}^{d}$:

\begin{lemma}
\label{FundationL1}Let $d\in \mathbb{N}\backslash \left\{ 0\right\} $. Let $%
X $ be an infinite compact space and let $\varphi $ be a minimal free action
of $\mathbb{Z}^{d}$ on $X$ by homeomorphisms. Let $v$ be a unitary in $%
C(X)\rtimes _{\varphi }\mathbb{Z}^{d}$ such that $vC(X)v^{\ast }=C(X)$. Then
there exists orthogonal projections $\left( p_{g}\right) _{g\in \mathbb{Z}%
^{d}}$ in $C(X)$ such that $\sum_{g\in \mathbb{Z}^{d}}p_{g}=1$ while $%
\left\{ g\in \mathbb{Z}^{d}:p_{g}\not=0\right\} $ is finite and:%
\begin{equation*}
v=f\sum_{g\in \mathbb{Z}^{d}}p_{g}U_{\varphi }^{g}
\end{equation*}%
for some $f\in C(X)$.
\end{lemma}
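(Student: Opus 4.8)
The plan is to analyze $v$ through its Fourier coefficients relative to the conditional expectation $\mathbb{E}$ of (\ref{Expectation}), using that $\mathbb{Z}^{d}$ is amenable, so that the universal crossed product $C(X)\rtimes _{\varphi }\mathbb{Z}^{d}$ coincides with the reduced one and $\mathbb{E}$ is \emph{faithful}. First I would record that conjugation by $v$ restricts to a $*$-automorphism $\beta :f\mapsto vfv^{\ast }$ of $C(X)$, which by Gelfand duality is implemented by a homeomorphism $\tau $ of $X$, namely $vfv^{\ast }=f\circ \tau ^{-1}$ for all $f\in C(X)$. For each $g\in \mathbb{Z}^{d}$ set $v_{g}=\mathbb{E}(vU_{\varphi }^{-g})\in C(X)$; these are the Fourier coefficients of $v$, and they will turn out to be unimodular multiples of the sought projections.

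The first substantive step is to pin down the supports $E_{g}=\{x\in X:v_{g}(x)\neq 0\}$. Starting from the covariance identity $vf=(f\circ \tau ^{-1})v$, multiplying on the right by $U_{\varphi }^{-g}$ and applying $\mathbb{E}$ together with its $C(X)$-bimodularity and the relation $fU_{\varphi }^{-g}=U_{\varphi }^{-g}(f\circ \varphi ^{-g})$, I would obtain
\begin{equation*}
v_{g}\cdot (f\circ \varphi ^{-g})=(f\circ \tau ^{-1})\cdot v_{g}\qquad \text{for all }f\in C(X),\ g\in \mathbb{Z}^{d}.
\end{equation*}
Evaluating at a point $x\in E_{g}$ forces $f(\varphi ^{-g}x)=f(\tau ^{-1}x)$ for every $f$, hence $\tau ^{-1}x=\varphi ^{-g}x$; thus $\tau ^{-1}$ agrees with $\varphi ^{-g}$ on $E_{g}$. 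Freeness of $\varphi $ then makes the $E_{g}$ \emph{pairwise disjoint}: a point in $E_{g}\cap E_{g^{\prime }}$ would satisfy $\varphi ^{-g}x=\varphi ^{-g^{\prime }}x$, i.e. $\varphi ^{g^{\prime }-g}x=x$, so $g=g^{\prime }$.

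Next I would invoke unitarity. By the Parseval identity for the faithful expectation $\mathbb{E}$, one has $\sum_{g}|v_{g}|^{2}=\mathbb{E}(vv^{\ast })=\mathbb{E}(1)=1$ in $C(X)$, so $\sum_{g}|v_{g}(x)|^{2}=1$ for every $x\in X$. Since the supports $E_{g}$ are disjoint, at each $x$ at most one summand is nonzero, whence that summand equals $1$: each $|v_{g}|$ is $\{0,1\}$-valued, every $E_{g}$ is clopen, and the $E_{g}$ partition $X$. Setting $p_{g}=\mathbf{1}_{E_{g}}\in C(X)$ gives orthogonal projections with $\sum_{g}p_{g}=1$, and as a partition of the compact space $X$ into disjoint clopen sets it must be finite, so $\{g:p_{g}\neq 0\}$ is finite. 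Finally I would put $f=\sum_{g}v_{g}$, a finite sum of continuous functions with disjoint clopen supports, hence continuous (and unimodular), satisfying $v_{g}=fp_{g}$. Then $w=f\sum_{g}p_{g}U_{\varphi }^{g}=\sum_{g}v_{g}U_{\varphi }^{g}$ has the same Fourier coefficients as $v$, by a direct computation using $\mathbb{E}(hU_{\varphi }^{j})=0$ for $j\neq 0$; thus all Fourier coefficients of $v-w$ vanish, and faithfulness of $\mathbb{E}$ via Parseval yields $\mathbb{E}((v-w)^{\ast }(v-w))=0$, so $v=w=f\sum_{g}p_{g}U_{\varphi }^{g}$, as required.

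I expect the main obstacle to be the rigorous justification of the Parseval identity $\sum_{g}|v_{g}|^{2}=\mathbb{E}(vv^{\ast })$ and of the reconstruction $v=\sum_{g}v_{g}U_{\varphi }^{g}$ from the Fourier coefficients. This is precisely where amenability of $\mathbb{Z}^{d}$ (identifying the full and reduced crossed products) and faithfulness of $\mathbb{E}$ are essential, and it is the one place where the passage from $\mathbb{Z}$ to $\mathbb{Z}^{d}$ must be verified rather than merely transcribed; the support and disjointness arguments, by contrast, go through verbatim from the $d=1$ case of \cite[Lemma 5.1]{Putnam95}.
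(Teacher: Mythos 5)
Your proof is correct, but it follows a genuinely different route from the paper's. The paper localizes at a point: it uses the regular representation $\pi _{x}$ of $C(X)\rtimes _{\varphi }\mathbb{Z}^{d}$ on $\ell ^{2}(\mathbb{Z}^{d})$, shows $\pi _{x}(C(X))^{\prime \prime }=\ell ^{\infty }(\mathbb{Z}^{d})$ is maximal abelian so that $\pi _{x}(v)$ permutes the basis vectors $\delta _{h}$ up to phases, and then reads off the projections $p_{g}=\left\vert \mathbb{E}(vU_{\varphi }^{-g})\right\vert $ and the function $f$ by computing $\pi _{x}\left( \mathbb{E}(vU_{\varphi }^{-g})\right) $ explicitly in terms of that permutation; faithfulness of $\pi _{x}$ --- deduced from simplicity of the crossed product, which is exactly where minimality and infiniteness of $X$ enter --- transfers the conclusion back to the algebra. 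You instead stay at the C*-level and never choose a representation: Gelfand duality turns $\mathrm{Ad}(v)|_{C(X)}$ into a homeomorphism $\tau $, the $C(X)$-bimodularity of $\mathbb{E}$ together with covariance pins the support of each Fourier coefficient $v_{g}=\mathbb{E}(vU_{\varphi }^{-g})$ inside the set where $\tau ^{-1}=\varphi ^{-g}$, freeness makes these supports pairwise disjoint, and Parseval in the Hilbert $C(X)$-module completion of the (reduced $=$ full, by amenability) crossed product yields both the $\{0,1\}$-valued moduli and the reconstruction $v=\sum_{g}v_{g}U_{\varphi }^{g}$. What your route buys: it uses only freeness of $\varphi $ and amenability of $\mathbb{Z}^{d}$ --- minimality and infiniteness of $X$ are never needed --- and it swaps the von Neumann algebra computation for the standard Fourier--Parseval apparatus of crossed products by discrete amenable groups, which, as you rightly flag, is the one block that must be quoted or verified (orthonormality of $\left\{ U_{\varphi }^{g}\right\} $ for $\left\langle a,b\right\rangle =\mathbb{E}(ab^{\ast })$, contractivity of the partial-sum projections, density of $\sum_{g}C(X)U_{\varphi }^{g}$). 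What the paper's route buys: it builds that apparatus by hand inside $B\left( \ell ^{2}(\mathbb{Z}^{d})\right) $, so it is self-contained, at the cost of the extra hypotheses. One cosmetic point: with the convention $a_{g}=\mathbb{E}(aU_{\varphi }^{-g})$, the clean Parseval identity is $\mathbb{E}(aa^{\ast })=\sum_{g}a_{g}a_{g}^{\ast }$, while $\mathbb{E}(a^{\ast }a)=\sum_{g}\left( a_{g}^{\ast }a_{g}\right) \circ \varphi ^{g}$; in your final step you should therefore either work with $(v-w)(v-w)^{\ast }$ or note that vanishing of all coefficients annihilates both expressions --- either way the conclusion $v=w$ stands.
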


\begin{proof}
Let $G=\mathbb{Z}^{d}$ and let the Pontryagin dual of $G$ be denoted by $%
\widehat{G}$ (i.e. $\widehat{G}=\mathbb{T}^{d}$). For $g\in G$ let $\delta
_{g}$ be the Dirac measure at $g\in G$ (we identify measures with their
density against the counting measure over the countable space $G$). Let $%
x\in X$. Define the following representation $\pi _{x}$ of $C(X)\rtimes
_{\varphi }G$ on $\ell ^{2}(G)$: for $f\in C(X)$ we set $\pi _{x}(f)\delta
_{g}=f\left( \varphi ^{-g}(x)\right) \delta _{g}$ and for $h\in G$ we set $%
\pi _{x}\left( U_{\varphi }^{h}\right) \delta _{g}=\delta _{g+h}$. This
representation is known as the regular representation induced by the measure
Dirac measure at $x$ on $X$ \cite{Pedersen79}. It is routine to check that $%
\pi _{x}(U_{\varphi }^{h})\pi _{x}(f)\pi _{x}\left( U_{\varphi }^{-h}\right)
=\pi _{x}\left( f\circ \varphi ^{-h}\right) $ and thus $\pi _{x}$ extends
uniquely to a *-representation of $C(X)\rtimes _{\varphi }G$. Moreover,
since the action of $G$ is minimal and $X$ is infinite, the crossed-product $%
C(X)\rtimes _{\varphi }G$ is simple and thus $\pi _{x}$ is faithful. In
addition, $\pi _{x}$ is irreducible (using the freedom of the action $%
\varphi $). These facts can be found in \cite{Pedersen79} and are well-known.

Let $g\in G$. Set $p_{g}=\left\vert \mathbb{E}\left( vU^{-g}\right)
\right\vert $ where $\mathbb{E}$ is the conditional expectation on $%
C(X)\rtimes _{\varphi }G$ defined by the dual action $\gamma $ of $\widehat{G%
}$ and Identity (\ref{Expectation}). Let $X_{g}\subseteq X$ be the support
of $p_{g}\in C(X)$. Let $\chi \in \widehat{G}$. We define the unitary $%
u_{\chi }$ on $\ell ^{2}\left( G\right) $ by $u_{\chi }\delta _{h}=\chi
(h)\delta _{h}$ for all $h\in G$. For $f\in C(X)$ we have:%
\begin{eqnarray*}
\pi _{x}\left( \gamma ^{\chi }(f)\right) &=&\pi _{x}(f) \\
&=&u_{\chi }\pi _{x}(f)u_{\chi }^{\ast }
\end{eqnarray*}%
(since $u_{\chi }$ is diagonal in the basis $\left( \delta _{g}\right)
_{g\in G}$ so commutes with the diagonal operators $\pi _{x}(f)$ for all $%
f\in C(X)$). Let $g\in G$. Then:%
\begin{eqnarray*}
u_{\chi }\pi _{x}(U^{g})u_{\chi }^{\ast }\delta _{h} &=&u_{\chi }\pi
_{x}(U^{g})\overline{\chi (h)}\delta _{h} \\
&=&u_{\chi }\pi _{x}(U^{g})\chi (-h)\delta _{h} \\
&=&u_{\chi }\chi (-h+g)\delta _{h+g} \\
&=&\chi (g)\delta _{h+g} \\
&=&\chi (g)\pi _{x}(U^{g})\delta _{h} \\
&=&\pi _{x}\left( \gamma ^{\chi }(U^{g})\right) \delta _{h}\text{.}
\end{eqnarray*}%
Hence since $\pi _{x}$ is a continuous *-morphism, $\pi _{x}\circ \gamma
^{\chi }=\limfunc{Ad}u_{\chi }\circ \pi _{x}$. (Note that since the action
is minimal, $\pi _{x}$ is actually injective on $C(X)$ and together with the
intertwining of the dual actions, this fact ensures an alternative proof
that $\pi _{x}$ is faithful).

Since $\pi _{x}(v)$ stabilizes $\pi _{x}(C(X))$, it also stabilizes $\pi
_{x}\left( C\left( X\right) \right) ^{\prime \prime }$. Now: 
\begin{equation*}
\pi _{x}\left( C\left( X\right) \right) ^{\prime \prime }=\ell ^{\infty
}\left( G\right)
\end{equation*}
where $\ell ^{\infty }\left( G\right) $ is identified with the
multiplication operators on $\ell ^{2}(G)$ or, equivalently, with the
maximal Abelian Von Neumann algebra of diagonal operators in the basis $%
\left\{ \delta _{g}:g\in G\right\} $. Indeed, the inclusion $\ell ^{\infty
}(G)\subseteq \pi _{x}\left( C(X)\right) ^{\prime }$ is easily checked, and
if $T\in \mathcal{B}\left( \ell ^{2}\left( G\right) \right) $ with $%
\left\langle T\delta _{g},\delta _{g^{\prime }}\right\rangle \not=0$ for
some $g\not=g^{\prime }$ then, choosing $f\in C(X)$ so that $f(\alpha
_{g}(x))\not=f(\alpha _{g^{\prime }}(x))$ we conclude that:%
\begin{eqnarray*}
\left\langle T\pi (f)\delta _{g},\delta _{g^{\prime }}\right\rangle
&=&f(\alpha _{g}(x))\left\langle T\delta _{g},\delta _{g^{\prime
}}\right\rangle , \\
\left\langle \pi (f)T\delta _{g},\delta _{g^{\prime }}\right\rangle
&=&f\left( \alpha _{g^{\prime }}(x)\right) \left\langle T\delta _{g},\delta
_{g^{\prime }}\right\rangle ,
\end{eqnarray*}%
and thus $\pi (f)T\not=T\pi (f)$. Hence, $T\in \pi (C(X))^{\prime }$ if and
only if $\varphi \in \ell ^{\infty }(G)$. Hence $\ell ^{\infty }(G)=\pi
_{x}\left( C(X)\right) ^{\prime }$ and thus $\pi _{x}(C(X))^{\prime \prime
}=\ell ^{\infty }(G)^{\prime }=\ell ^{\infty }(G)$.

Thus there exists $\left( \lambda _{h}\right) _{h\in G}$ with $\lambda
_{h}\in \mathbb{T}$ and $\sigma :G\rightarrow G$ a permutation such that for
all $h\in G$:%
\begin{equation*}
\pi _{x}(v)\delta _{h}=\lambda _{h}\delta _{\sigma (h)}\text{.}
\end{equation*}%
(note: if $Q_{h}$ is the projection onto $\mathbb{C}\delta _{h}$ then $\pi
_{x}(v)Q_{h}\pi _{x}(v)^{\ast }\in \ell ^{\infty }(G)$ and is a projection
so $\pi _{x}(v)Q_{h}\pi _{x}(v)^{\ast }=Q_{\sigma (h)}$.)

Then for all $g\in G$ and $h\in G$ we have:%
\begin{eqnarray*}
\pi _{x}\left( \mathbb{E}_{g}(v)\right) \delta _{h} &=&\int_{\widehat{G}}\pi
_{x}\left( \gamma ^{\chi }(\nu U^{-g})\right) \delta _{h}d\mu (\chi ) \\
&=&\int_{\widehat{G}}u_{\chi }\pi _{x}\left( v\right) \pi _{x}\left(
U^{-g}\right) u_{\chi }^{\ast }\delta _{h}d\mu (\chi ) \\
&=&\int_{\widehat{G}}\chi \left( \sigma \left( h-g\right) -h\right) \lambda
_{h-g}\delta _{\sigma (h-g)}d\mu \left( \chi \right) \\
&=&\left\{ 
\begin{array}{c}
\lambda _{g-h}\delta _{h}\text{ if }\sigma (h-g)=h \\ 
0\text{ if }\sigma (h-g)\not=h%
\end{array}%
\right. \text{.}
\end{eqnarray*}%
Hence:%
\begin{equation*}
\pi _{x}\left( p_{g}\right) \delta _{h}=\left\{ 
\begin{array}{c}
\delta _{h}\text{ if }\sigma (h-g)=h\text{,} \\ 
0\text{ if }\sigma (h-g)\not=h\text{.}%
\end{array}%
\right.
\end{equation*}%
This proves that $p_{g}$ is a projection and $p_{g}p_{h}=0$ iff $g\not=h\in
G $. In particular, $X_{g}$ is clopen since $p_{g}$ continuous.

Let $-g_{x}=\sigma ^{-1}(0)$ (note: $\sigma $ depends on $x$ as it is
defined via $\pi _{x}(v)$). Then if $0\in G$ is the neutral element of $G$:%
\begin{equation*}
p_{g_{x}}(x)=\left\langle \pi _{x}\left( p_{g_{x}}\right) \delta _{0},\delta
_{0}\right\rangle =1\text{.}
\end{equation*}%
Since $x$ is arbitrary in $X$, we conclude that $\dbigcup\limits_{g\in
G}X_{g}=X$ (by above equation: if $x\in X$ then $x\in X_{g_{x}}$). Since $%
\left\{ X_{g}:g\in G\right\} $ is an open covering of the compact $X$ there
exists a finite subset $S\subseteq G$ such that $X=\dbigcup\limits_{g\in
S}X_{g}$. Since the sets $X_{g}$ are pairwise disjoint (as $p_{g}p_{h}=0$)\
we conclude that if $g\in S^{c}$ then $X_{g}=\emptyset $. Hence, $p_{g}=0$
if $g\in S^{c}$ and $\oplus _{g\in S}p_{g}=1$.

Last, let $v_{0}=\sum_{g\in S}p_{g}U^{g}$. By construction, $\pi _{x}\left(
vv_{0}^{\ast }\right) =\lambda $. Since $\pi _{x}$ is faithful and unital
and $\lambda $ is a unitary, so is $vv_{0}^{\ast }$ and thus $v_{0}$ is a
unitary. In particular, $\sum_{g\in G}\varphi ^{-g}(p_{g})=v_{0}^{\ast
}v_{0}=1$. Moreover, since $\lambda \in \pi _{x}\left( C(X)\right) ^{\prime
\prime }$ and $C(X)$ is maximal Abelian in $C(X)\rtimes _{\varphi }G$ (as $%
\varphi $ is free), we conclude that there exists $f\in C(X)$ such that $\pi
_{x}(f)=\lambda $. By faithfulness of $\pi _{x}$ we conclude that $v=fv_{0}$
as claimed.
\end{proof}

\bigskip We now prove the main result of this section:

\begin{theorem}
\label{MainChar}Let $\left( X,\varphi ,\mathbb{Z}^{d}\right) $ and $\left(
Z,\psi ,\mathbb{Z}^{d}\right) $ be two free minimal Cantor dynamical
systems. The following are equivalent:

\begin{enumerate}
\item There exists a bounded orbit injection $\theta :X\rightarrow Z$,

\item There exists a *-monomorphism $C(X)\rtimes _{\varphi }\mathbb{Z}^{d}$
into $C(Z)\rtimes _{\psi }\mathbb{Z}^{d}$ such that:

\begin{itemize}
\item $\alpha (C(X))\subseteq C(Z)$,

\item Letting $p=\alpha (1)$, the range of $\alpha $ is $p\left( C(Z)\rtimes
_{\psi }\mathbb{Z}^{d}\right) p$.
\end{itemize}
\end{enumerate}
\end{theorem}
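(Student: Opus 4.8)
The implication $(1)\Rightarrow(2)$ is exactly the content of Proposition \ref{Morita1}, so the plan is to prove $(2)\Rightarrow(1)$. Suppose then that we are given a *-monomorphism $\alpha :C(X)\rtimes _{\varphi }\mathbb{Z}^{d}\to C(Z)\rtimes _{\psi }\mathbb{Z}^{d}$ with $\alpha (C(X))\subseteq C(Z)$ and $\operatorname{ran}\alpha =p(C(Z)\rtimes _{\psi }\mathbb{Z}^{d})p$ for $p=\alpha (1)$. The projection $p$ is a nonzero element of $C(Z)$, hence the indicator function of a clopen set $Z^{\prime }\subseteq Z$, and $\alpha |_{C(X)}$ is a unital *-isomorphism of $C(X)$ onto $pC(Z)p=C(Z^{\prime })$. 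By Gelfand duality this dualizes to a continuous open injection $\theta :X\to Z$ with clopen range $Z^{\prime }=\theta (X)$ and $\alpha (f)=f\circ \theta ^{-1}$ on $Z^{\prime }$ (and $0$ off $Z^{\prime }$). It remains to show that $\theta $ is a bounded orbit injection, i.e. that it satisfies (\ref{oicocycle}) and has continuous cocycle.

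The first step recovers the action data. For $z\in \mathbb{Z}^{d}$ set $V^{z}=\alpha (U_{\varphi }^{z})+(1-p)$. Since $\alpha (U_{\varphi }^{z})$ is a unitary of the corner $p(C(Z)\rtimes _{\psi }\mathbb{Z}^{d})p$ and $p\in C(Z)$ commutes with all of $C(Z)$, one checks directly that each $V^{z}$ is a unitary of $C(Z)\rtimes _{\psi }\mathbb{Z}^{d}$ normalizing $C(Z)$, that $V^{0}=1$, and that $z\mapsto V^{z}$ is a group homomorphism. I would then apply Lemma \ref{FundationL1} to each $V^{z}$ to write $V^{z}=f_{z}\sum_{g}p_{g}^{z}U_{\psi }^{g}$ with $(p_{g}^{z})_{g}$ orthogonal clopen projections in $C(Z)$ summing to $1$, finitely many nonzero. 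Consequently $\operatorname{Ad}(V^{z})$ restricts to an automorphism of $C(Z)$ acting as $m\mapsto m\circ \psi ^{-g}$ on each clopen piece $\operatorname{supp}(p_{g}^{z})$; dualizing gives a homeomorphism $\gamma ^{z}$ of $Z$ that moves every point inside its $\psi $-orbit by a locally constant amount, and $z\mapsto \gamma ^{z}$ is a $\mathbb{Z}^{d}$-action. Because $V^{z}$ acts as the identity on the summand $(1-p)C(Z)$, the homeomorphism $\gamma ^{z}$ fixes $Z\setminus Z^{\prime }$ pointwise, while the covariance identity $V^{z}\alpha (f)V^{-z}=\alpha (f\circ \varphi ^{-z})$ (which survives passage to the corner since $\alpha (f)=p\alpha (f)p$) forces $\gamma ^{z}\circ \theta =\theta \circ \varphi ^{z}$ on $Z^{\prime }$. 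Defining $\eta :X\times \mathbb{Z}^{d}\to \mathbb{Z}^{d}$ by $\psi ^{\eta (x,z)}\theta (x)=\theta (\varphi ^{z}x)=\gamma ^{z}(\theta (x))$, the local constancy of the pieces $p_{g}^{z}$ makes $\eta (\cdot ,z)$ continuous, and the identity $\theta (\varphi ^{w}x)=\psi ^{\eta (x,w)}\theta (x)$ gives the forward direction of (\ref{oicocycle}) together with boundedness of the cocycle.

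The remaining and most delicate point is the converse direction of (\ref{oicocycle}): if $\theta (x)$ and $\theta (y)$ lie in a common $\psi $-orbit, say $\theta (y)=\psi ^{v}\theta (x)$, I must produce $w$ with $\varphi ^{w}x=y$. This is exactly where the surjectivity hypothesis $\operatorname{ran}\alpha =p(C(Z)\rtimes _{\psi }\mathbb{Z}^{d})p$ enters, and it is the main obstacle. The idea is that the partial translation $pU_{\psi }^{\pm v}p$ (a nonzero element of the corner, since $\theta (x),\theta (y)\in Z^{\prime }$, with the sign fixed by the crossed-product convention) must lie in $\operatorname{ran}\alpha $, whereas every element $\alpha (a)=\sum _{w}\alpha (a_{w})\,\alpha (U_{\varphi }^{w})$ of the range has, by Proposition \ref{FundationL2} and the description $\alpha (U_{\varphi }^{w})=\sum _{h}p_{h}^{w}U_{\psi }^{h}$, its $\psi $-Fourier coefficients supported only on the $\psi $-translations realized along the $\gamma $-orbit, namely those of the form $\eta (\cdot ,w)$. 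Computing the relevant Fourier coefficient of $pU_{\psi }^{\pm v}p$ at the point corresponding to the arrow $\theta (x)\to \theta (y)$ and matching it against $\sum _{w}\alpha (a_{w})p_{v}^{w}$ forces an index $w$ with $\eta (x,w)=v$, hence $\psi ^{v}\theta (x)=\theta (\varphi ^{w}x)$, so $\theta (\varphi ^{w}x)=\theta (y)$, and injectivity of $\theta $ yields $\varphi ^{w}x=y$. (Conceptually, surjectivity onto the corner says precisely that any two points of $Z^{\prime }$ in a common $\psi $-orbit already lie in a common $\gamma $-orbit, i.e. that $\theta $ respects the full orbit relations; this is the diagonal-preserving, hence orbit-recovering, content of the isomorphism.) This establishes (\ref{oicocycle}) in full, completing the verification that $\theta $ is a bounded orbit injection and hence the theorem.
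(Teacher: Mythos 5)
Your proposal is correct, and for most of its length it follows the same route as the paper's proof: the implication $(1)\Rightarrow(2)$ is quoted from Proposition \ref{Morita1}; for $(2)\Rightarrow(1)$ you build $\theta$ from $\alpha|_{C(X)}$ by Gelfand duality, form the unitaries $V^{z}=\alpha(U_{\varphi }^{z})+(1-p)$, check that they normalize $C(Z)$, and invoke Lemma \ref{FundationL1} to write $V^{z}=f_{z}\sum_{h}p_{h}^{z}U_{\psi }^{h}$, from which the forward implication in (\ref{oicocycle}) and continuity of the cocycle follow exactly as in the paper. The genuine difference is in the converse implication of (\ref{oicocycle}). The paper argues globally: it shows $pU_{\psi }^{k}p$ is a nonzero element of the spectral subspace $A_{k}$ of the dual action, shows that if no $g$ with $\varphi ^{g}(\theta ^{-1}(x))=\theta ^{-1}(y)$ existed then the projections $p_{k}^{g}$ would vanish on \emph{all} of $\theta (X)$ --- a propagation step that uses minimality of $\varphi $ (density of a single orbit, Claim \ref{ClaimNullCondPg}) --- and then derives a contradiction from orthogonality of the range of $\alpha $ to $A_{k}$ with respect to the pairing $\mathbb{E}(b^{\ast }a)$. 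You instead evaluate the $v$-th Fourier coefficient $\mathbb{E}(\,\cdot \,U_{\psi }^{-v})$ at the single point $z=\theta (y)$: for a finite sum $\alpha (\sum_{w}a_{w}U_{\varphi }^{w})$ this coefficient is $\sum_{w}\alpha (a_{w})f_{w}p_{v}^{w}$, which vanishes at $z$ unless $p_{v}^{w}(z)\neq 0$ for some $w$, whereas for $pU_{\psi }^{v}p$ it is $p\cdot (p\circ \psi ^{-v})$, which equals $1$ at $z$ because $\theta (x),\theta (y)\in \theta (X)$; since $pU_{\psi }^{v}p$ lies in the corner, i.e.\ in the range of $\alpha $, some $p_{v}^{w}(z)$ must equal $1$, and the covariance identity then gives $y=\varphi ^{w}(x)$. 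This pointwise variant is a real simplification: it dispenses entirely with the minimality-based propagation step (minimality enters your argument only through Lemma \ref{FundationL1}), and buys the same conclusion from freeness and the corner hypothesis alone. Two small points to tighten in a write-up: a general element of the range is only a norm limit of such finite sums, so you must observe that both the Fourier-coefficient map and evaluation at $z$ are norm-continuous before concluding that the coefficient vanishes identically on the range whenever all $p_{v}^{w}(z)=0$; and the decomposition $\alpha (U_{\varphi }^{w})=f_{w}\sum_{h}p_{h}^{w}U_{\psi }^{h}$ comes from Lemma \ref{FundationL1} (not Proposition \ref{FundationL2}, which runs in the opposite direction) and carries the unimodular factor $f_{w}$ --- harmless for the vanishing argument, but it should not be dropped.
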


\begin{proof}
Proposition (\ref{Morita1}) establishes that (1) implies (2). We are left to
show that (2) implies (1).

Assume henceforth that we are given a *-monomorphim $\alpha :C(X)\rtimes
_{\varphi }\mathbb{Z}^{d}\rightarrow C(Z)\rtimes _{\psi }\mathbb{Z}^{d}$
with the properties mentioned in (2). Let $p=\alpha (1)\in C(Z)$ ($p$ is a
continuous $\left\{ 0,1\right\} $ valued function on $Z$ whose support is a
clopen subset of $Z$ denoted by $X^{\prime }$). Note that we can write any
operator in $C(Z)\rtimes _{\psi }\mathbb{Z}^{d}$ as a 2 by 2 matrix $\left[ 
\begin{array}{cc}
a & b \\ 
c & d%
\end{array}%
\right] $ such that $a\in p\left( C(Z)\rtimes _{\psi }\mathbb{Z}^{d}\right)
p $, $b\in p\left( C(Z)\rtimes _{\psi }\mathbb{Z}^{d}\right) (1-p)$, $c\in
(1-p)\left( C(Z)\rtimes _{\psi }\mathbb{Z}^{d}\right) p$ and $d\in \left(
1-p\right) \left( C(Z)\rtimes _{\psi }\mathbb{Z}^{d}\right) \left(
1-p\right) $. Now, in this decomposition, for all $g\in \mathbb{Z}^{d}$ we
set $\omega ^{g}=\alpha (U_{\varphi }^{g})$ and $V^{g}=\left[ 
\begin{array}{cc}
\omega ^{g} & 0 \\ 
0 & 1-p%
\end{array}%
\right] $. Fix $g\in \mathbb{Z}^{d}$. By assumption on $\alpha $ we have: 
\begin{equation*}
C(Z)=\left\{ \left[ 
\begin{array}{cc}
\alpha (f) & 0 \\ 
0 & f^{\prime }%
\end{array}%
\right] :f\in C(X),f^{\prime }\in C(Z\backslash X^{\prime })\right\} \text{.}
\end{equation*}%
Fix $g\in \mathbb{Z}^{d}$. We also have by our assumptions on $\alpha $ that
for all $f\in C(X)$:%
\begin{equation*}
\omega ^{g}\alpha (f)\omega ^{-g}=\alpha (U_{\varphi }^{g}fU_{\varphi
}^{-g})=\alpha (f\circ \varphi _{-g})\in pC(Z)p\text{.}
\end{equation*}%
Hence $V^{g}$ stabilizes $C(Z)$ in $C(Z)\rtimes _{\psi }\mathbb{Z}^{d}$. By
Lemma \ref{FundationL1}, we conclude that for all $h\in \mathbb{Z}^{d}$
there exists a projection $p_{h}^{g}\in C(Z)$ such that $\sum_{h\in \mathbb{Z%
}^{d}}p_{h}^{g}=1$ with $\left\{ h\in \mathbb{Z}^{d}:p_{h}^{g}\not=0\right\} 
$ finite and $f_{g}\in C(X^{\prime })$ (valued in $\mathbb{T}$) such that:%
\begin{equation*}
V^{g}=f_{g}\sum_{h\in \mathbb{Z}^{d}}p_{h}^{g}U_{\psi }^{h}\text{.}
\end{equation*}%
By assumption on $\alpha $, the set $X^{\prime }$ is homeomorphic to $X$.
Then let $x\not\in X^{\prime }$. Let $h\in \mathbb{Z}^{d}\backslash \left\{
0\right\} $. Then if $p_{h}^{g}(x)\not=0$ (hence $=1$) then $p_{k}^{g}(x)=0$
for all $k\not=h$ by orthogonality. Let $f^{\prime }\in C(Z)$ supported in $%
Z\backslash X^{\prime }$ and such that $1=f^{\prime }\left( x\right)
\not=f^{\prime }\left( \psi ^{-h}(x)\right) $. Then $V^{g}f^{\prime
}V^{-g}=f^{\prime }$ by construction. Yet:%
\begin{eqnarray*}
&&\left( f_{g}\sum_{k\in \mathbb{Z}^{d}}p_{k}^{g}U_{\psi }^{k}\right)
f^{\prime }\left( f_{g}\sum_{k\in \mathbb{Z}^{d}}p_{k}^{g}U_{\psi
}^{k}\right) ^{\ast } \\
&=&f_{g}\left( \sum_{k\in \mathbb{Z}^{d}}\sum_{k^{\prime }\in \mathbb{Z}%
^{d}}\left( p_{k}^{g}\left( f^{\prime }\circ \psi ^{-k}\right) \left(
U_{\psi }^{k-k^{\prime }}\right) p_{k^{\prime }}^{g}\right) \right)
f_{g}^{\ast } \\
&=&f_{g}\left( \sum_{k\in \mathbb{Z}^{d}}\sum_{k^{\prime }\in \mathbb{Z}%
^{d}}\left( f^{\prime }\circ \psi ^{-k}\right) \left( p_{k}^{g}p_{k^{\prime
}}^{g}\circ \psi ^{k^{\prime }-k}\right) U_{\psi }^{k-k^{\prime }}\right)
f_{g}^{\ast }\text{,}
\end{eqnarray*}%
and thus, by orthogonality, as in Identity (\ref{Morita_id1}) in the proof
of Proposition (\ref{Morita1}), we see that:%
\begin{equation*}
\left( f_{g}\sum_{k\in \mathbb{Z}^{d}}p_{k}^{g}U_{\psi }^{k}\right)
f^{\prime }\left( f_{g}\sum_{k\in \mathbb{Z}^{d}}p_{k}^{g}U_{\psi
}^{k}\right) ^{\ast }=\sum_{k\in \mathbb{Z}^{d}}p_{k}^{g}\cdot f^{\prime
}\circ \psi ^{-k}\text{.}
\end{equation*}%
Hence:%
\begin{eqnarray*}
\left( \sum_{k\in \mathbb{Z}^{d}}p_{k}^{g}U_{\psi }^{k}\right) f^{\prime
}\left( \sum_{k\in \mathbb{Z}^{d}}p_{k}^{g}U_{\psi }^{k}\right) ^{\ast }(x)
&=&p_{h}^{g}(x)\cdot f^{\prime }\circ \psi ^{-h}(x)\cdot p_{h}^{g}(x) \\
&=&f^{\prime }\circ \psi ^{-h}(x)\not=f^{\prime }(x)\text{.}
\end{eqnarray*}

This is a contradiction.\ Hence $p_{h}^{g}(x)=0$ and $p_{h}^{g}\in
C(X^{\prime })$ for all $h\not=0$ and $g\in G$. Clearly $1-p$ is a
subprojection of $p_{0}^{g}$ for all $g\in G$. It will be convenient to
change our notation in the sequel, and denote the projection $p_{0}^{g}p$ by 
$p_{0}^{g}$ for all $g\in \mathbb{Z}^{d}$. With this new notation, $%
\sum_{h\in \mathbb{Z}^{d}}p_{h}^{g}=p$.

Let $\theta :X\rightarrow X^{\prime }$ be the homeomorphism defined by $%
\alpha (f)(x)=f\circ \theta ^{-1}(x)$ for all $x\in X^{\prime }$ and $f\in
C(X)$. We claim that $\theta $, identified as an injection $X\rightarrow Z$,
is a bounded orbit injection.

Let $x\in X$ and $g\in \mathbb{Z}^{d}$. Let $X_{h}^{g}$ be the support of $%
p_{h}^{g}$ for all $h\in \mathbb{Z}^{d}$ and note that $X_{h}^{g}$ is a
clopen subset of $X^{\prime }$. Since $\sum_{h\in \mathbb{Z}^{d}}p_{h}^{g}=p$
and these projections are orthogonal, there exists a unique $h=h\left(
x,g\right) \in \mathbb{Z}^{d}$ such that $\theta (\varphi ^{g}(x))\in
X_{h}^{g}$. Now, since there are only finitely many nonzero projections $%
\left\{ p_{h}^{g}:h\in \mathbb{Z}^{d}\right\} $, the map $x\mapsto h\left(
x,g\right) $ is bounded on $X$.

Let $x,y\in X$ and set $z=\theta (y)$. Then $\varphi ^{g}(x)=y$ implies $%
z\in X_{h(x,g)}^{g}$. Let $h=h(x,g)$. Now by construction, $z\in
X_{h(x,g)}^{g}$ if and only if $p_{k}^{g}(z)=0$ for all $k\in \mathbb{Z}%
^{d}\backslash \{h\}$. Using the same computation technique as before, we
have:%
\begin{eqnarray}
\alpha (f)\circ \psi ^{-h(x,g)}(z) &=&U_{\psi }^{h(x,g)}\alpha (f)U_{\psi
}^{-h(x,g)}(z)  \notag \\
&=&\left( \sum_{k\in \mathbb{Z}^{d}}p_{k}^{g}U_{\psi }^{k}\right) \cdot
\alpha (f)\cdot \left( \sum_{k\in \mathbb{Z}^{d}}p_{k}^{g}U_{\psi
}^{k}\right) ^{\ast }(z)  \notag \\
&=&\left( V^{g}\alpha (f)V^{-g}\right) (z)  \notag \\
&=&\alpha (U_{\varphi }^{g}fU_{\varphi }^{-g})(z)  \notag \\
&=&\alpha \left( f\circ \varphi _{-g}\right) (z)\text{.}  \label{morita2_eq1}
\end{eqnarray}

Hence $\varphi ^{g}(x)=y$ implies by definition of $\alpha $ and Equality (%
\ref{morita2_eq1}):%
\begin{equation*}
f\circ \varphi ^{-g}(y)=f\circ \theta ^{-1}\circ \psi ^{-h(x,g)}\circ \theta
(y)
\end{equation*}
for all $f\in C(X)$, or equivalently as $C(X)$ separates the points of $X$:%
\begin{equation*}
x=\varphi ^{-g}(y)=\theta ^{-1}\circ \psi ^{h(x,g)}\circ \theta (y)\text{,}
\end{equation*}%
or equivalently: $\psi ^{-h(x,g)}\left( \theta (x)\right) =\theta (y)$.

Let us now assume that $\psi ^{k}\left( x\right) =y$ for some $x,y\in
X^{\prime }$ and $k\in \mathbb{Z}^{d}$. We shall use the following sequence
of claims to establish the existence of $n(x,g)\in \mathbb{Z}^{d}$ such that 
$\varphi ^{n(x,g)}(\theta ^{-1}(x))=\theta ^{-1}(y)$.

Let $A=C(Z)\rtimes _{\psi }\mathbb{Z}^{d}$. The dual action $\gamma $ of $%
\mathbb{T}^{d}$ on $A$ defines spectral subspaces by setting:%
\begin{equation*}
\forall g\in \mathbb{Z}^{d}\ \ \ A_{g}=\left\{ a\in A:\forall \omega \in 
\mathbb{T}^{d}\ \ \ \gamma ^{\omega }(a)=\omega ^{g}a\right\}
\end{equation*}%
where, if we write $\omega =\left( \omega _{1},\ldots ,\omega _{d}\right) $
and $g=\left( g_{1},\ldots ,g_{d}\right) $ then $\omega ^{g}=\left( \omega
_{1}^{g_{1}},\ldots ,\omega _{d}^{g_{d}}\right) $. For $a,b\in A$ we set:%
\begin{equation*}
\left\langle a,b\right\rangle =\mathbb{E}\left( b^{\ast }a\right)
\end{equation*}%
and we thus defined an $C(Z)$-valued inner product. We check easily that $%
A_{k}$ and $A_{m}$ are orthogonal for $k\not=m$ for this inner product.

We now prove:

\begin{claim}
\label{ClaimPUP}There exists $x\in X^{\prime }$ and $k\in \mathbb{Z}^{d}$
with $\psi ^{k}(x)\in X^{\prime }$ if and only if $pU_{\psi }^{k}p\not=0$.
\end{claim}

Note that:%
\begin{eqnarray}
\left( pU_{\psi }^{k}p\right) ^{\ast }\left( pU_{\psi }^{k}p\right)
&=&p\left( p\circ \psi ^{k}\right) p  \notag \\
&=&p\left( p\circ \psi ^{k}\right) \text{.}  \label{ClaimPUP_eq1}
\end{eqnarray}%
Hence $pU_{\psi }^{k}p\not=0$ if and only if $\left( pU_{\psi }^{k}p\right)
\left( pU_{\psi }^{k}p\right) ^{\ast }\not=0$ which is equivalent by (\ref%
{ClaimPUP_eq1})\ to the existence of $x\in X^{\prime }$ such that $\psi
^{k}(x)\in X^{\prime }$.

\begin{claim}
\label{ClaimPUPAK}For any $k\in \mathbb{Z}^{d}$ we have $pU_{\psi }^{k}p\in
A_{k}$.
\end{claim}

Let $\chi \in \mathbb{T}^{d}$. Then, note that $p=\alpha (1)\in C(Z)$ by
assumption so $\gamma ^{\chi }(p)=p$ for all $\chi \in \mathbb{T}^{d}$ hence:%
\begin{equation*}
\gamma ^{\chi }(pU_{\psi }^{k}p)=\gamma ^{\chi }(p)\gamma ^{\chi }(U_{\psi
}^{k})\gamma ^{\chi }(p)=p\gamma ^{\chi }(U_{\psi }^{k})p=\chi ^{k}pU_{\psi
}^{k}p
\end{equation*}%
as claimed.

\begin{claim}
\label{ClaimNullPg}Suppose that there exists $k\in \mathbb{Z}^{d}$ such that
for all $g\in \mathbb{Z}^{d}$ and $x\in X^{\prime }$ we have $%
p_{k}^{g}\left( x\right) =0$. Then $\alpha (C(X)\rtimes _{\varphi }\mathbb{Z}%
^{d})$ is orthogonal to $A_{k}$ for $\left\langle .,.\right\rangle $.
\end{claim}

The C*-algebra $\alpha (C(X)\rtimes _{\varphi }\mathbb{Z}^{d})$ is generated
by $\alpha (C(X))\subseteq C(Z)$ and $\omega ^{g}$ ($g\in \mathbb{Z}^{d}$).
By Lemma\ \ref{FundationL1}, for all $g\in \mathbb{Z}^{d}$ we have $\omega
^{g}=f_{g}\sum_{h\in \mathbb{Z}^{d}}p_{h}^{g}U_{\psi }^{h}$ after replacing $%
p_{0}^{g}$ with $p_{0}^{g}p$ and with $f_{g}\in C(X^{\prime })$. Thus with
our assumption, $\omega ^{g}\perp A_{k}$. Hence $\alpha (C(X)\rtimes
_{\varphi }\mathbb{Z}^{d})$ is orthogonal to $A_{k}$.

\begin{claim}
\label{ClaimNullCondPg}Suppose that there exists $x\in X^{\prime }$ and $%
k\in \mathbb{Z}^{d}$ such that $y:=\psi ^{k}(x)\in X^{\prime }$. Assume
moreover that there is no $g\in \mathbb{Z}^{d}$ such that $\theta
^{-1}(y)=\varphi ^{g}(\theta ^{-1}(x))$. Then for all $g\in \mathbb{Z}^{d}$
and $z\in X^{\prime }$ we have $p_{k}^{g}(z)=0$.
\end{claim}

Let $g,j\in \mathbb{Z}^{d}$. By definition, if $p_{k}^{g}\left( \theta \circ
\varphi ^{j}\circ \theta ^{-1}(x)\right) \not=0$ then 
\begin{equation*}
y=\theta (\varphi ^{g}(\theta ^{-1}(\theta \circ \varphi ^{j}\circ \theta
^{-1}(x))))=\theta \left( \varphi ^{g+j}\left( \theta ^{-1}(x)\right)
\right) \text{.}
\end{equation*}%
So by assumption $p_{k}^{g}\left( \theta \circ \varphi ^{j}\circ \theta
^{-1}(x)\right) =0$ for all $g,j\in \mathbb{Z}^{d}$. Now:%
\begin{equation*}
\overline{\left\{ \varphi ^{j}\left( \theta ^{-1}(x)\right) :j\in \mathbb{Z}%
^{d}\right\} }=X
\end{equation*}%
since $\varphi $ is minimal.

Since $\theta $ and $p_{k}^{g}$ are continuous and $p_{k}^{g}\circ \theta $
is null on $\left\{ \varphi ^{j}\left( \theta ^{-1}(x)\right) :j\in \mathbb{Z%
}^{d}\right\} $, and since $\theta (X)=X^{\prime }$, we conclude that $%
p_{k}^{g}(X^{\prime })=\left\{ 0\right\} $ for all $g\in \mathbb{Z}^{d}$.
Since $p_{k}^{g}$ is supported on $X^{\prime }$ by construction, we conclude
that $p_{k}^{g}$ is null on $X^{\prime }$ for all $g\in \mathbb{Z}^{d}$ as
claimed.

\begin{claim}
If there exists $x\in X^{\prime }$ such that $\psi ^{k}(x)=y\in X^{\prime }$
then there exists $g\in \mathbb{Z}^{d}$ such that $\varphi ^{g}(\theta
^{-1}(x))=\theta ^{-1}(y)$.
\end{claim}

By Claim (\ref{ClaimPUP}), we conclude $pU_{\psi }^{k}p\not=0$. Since $%
\alpha $ is onto $pAp$, we conclude $pU_{\psi }^{k}p\in \func{ran}\alpha $.
Yet $pU_{\psi }^{k}p\in A_{k}$ by Claim (\ref{ClaimPUPAK}). If we assume
that there exists no $g\in \mathbb{Z}^{d}$ such that $\varphi ^{g}(x)=y$
then by Claim (\ref{ClaimNullCondPg}) $p_{k}^{g}=0$ for all $g\in \mathbb{Z}%
^{d}$. By Claim (\ref{ClaimNullPg}), this would imply that $\func{ran}\alpha 
$ is orthogonal to $A_{k}$. We have reached a contradiction. Hence there
exists $g\in \mathbb{Z}^{d}$ such that $\theta \circ \varphi ^{g}\circ
\theta ^{-1}(x)=y$.

Hence we conclude that $\theta $ is a bounded orbit injection as desired.
\end{proof}

\bigskip We note that our result proves, among other things, that the corner 
$p\left( C(Z)\rtimes _{\psi }\mathbb{Z}^{d}\right) p$ with the notation of
Theorem (\ref{MainChar}) is in fact a C*-crossed-product itself, a fact
which follows in our proof from the special nature of bounded orbit
injections.

\bigskip The ordered $K_{0}$ group of C*-algebras is an invariant for
Rieffel-Morita equivalence, and Theorem (\ref{MainChar}) implies that two
minimal free Cantor systems are bounded orbit injection equivalence must
have Rieffel-Morita equivalent C*-crossed-products. Hence the ordered $K_{0}$
group of C*-crossed-products of minimal free Cantor systems is an invariant
of bounded orbit injection equivalence. However the computation of the $%
K_{0} $ group of a C*-crossed-product of a minimal free actions of $\mathbb{Z%
}^{d}$ on a Cantor set is a delicate matter for $d>1$, as shown for instance
in \cite{Phillips05} -- unless in the case $d=1$ where the
Pimsner-Voiculescu six-term exact sequence suffices \cite{Blackadar98}. As
we shall see in the next section, one can however consider an alternative
ordered group as an invariant for orbit injection equivalence, hence for the
equivalence described in Theorem (\ref{MainChar}) between
C*-crossed-products of minimal free Cantor actions.

\section{Orbit Equivalence from Orbit Injection Equivalence}

If $\left( X,\varphi ,\mathbb{Z}^{d}\right) $ and $\left( Y,\psi ,\mathbb{Z}%
^{d}\right) $ are bounded orbit injection equivalent, there is a third
system $\left( Z,\alpha ,\mathbb{Z}^{d}\right) $ and bounded orbit
injections from both $\left( X,\varphi ,\mathbb{Z}^{d}\right) $ and $\left(
Y,\psi ,\mathbb{Z}^{d}\right) $ into $\left( Z,\alpha ,\mathbb{Z}^{d}\right) 
$. In this section, we focus our attention on the problem of omitting the
system $\left( Z,\alpha ,\mathbb{Z}^{d}\right) $. That is, we address the
question of when the existence of a bounded orbit injection equivalence be
strengthened to the existence of a bounded orbit injection and/or an orbit
equivalence.

A particularly useful invariant of bounded orbit equivalence for our purpose
is an ordered group defined by:

\begin{eqnarray*}
G\left( \varphi \right) &=&C\left( X,\mathbb{Z}\right) /\langle f-f\varphi
^{v}:v\in \mathbb{Z}^{d}\rangle \\
G\left( \varphi \right) _{+} &=&\left\{ \left[ f\right] :f\left( x\right)
\geq 0\text{ for all }x\in X\right\} \text{.}
\end{eqnarray*}%
This ordered group has been studied for minimal $\mathbb{Z}^{d}$ systems,
e.g. in \cite{Forrest, Matui08}, and is referred to there as the dynamical
cohomology group. For $d=1$, the triple formed by this ordered group along
with a distinguished order unit forms a complete invariant for strong orbit
equivalence. What, if any, analogous theorem there is for $d>1$ is an open
problem.

In \cite{Lightwood07}, Theorem (2.1)\ proves that if the suspension spaces
of two minimal free Cantor systems $\left( X,\varphi ,\mathbb{Z}^{d}\right) $
and $\left( Y,\psi ,\mathbb{Z}^{d}\right) $ are homeomorphic then $\left(
X,\varphi ,\mathbb{Z}^{d}\right) $ and $\left( Y,\psi ,\mathbb{Z}^{d}\right) 
$ are bounded-orbit injection equivalent. The converse is proven for $d\in
\left\{ 1,2\right\} $ in \cite{Lightwood07}. Thus, one can apply some
results proven in this section if we know that two minimal free Cantor
systems $\left( X,\varphi ,\mathbb{Z}^{d}\right) $ and $\left( Y,\psi ,%
\mathbb{Z}^{d}\right) $ are suspension equivalent. We work here with the
simpler relation of bounded orbit injection equivalence.

\subsection{Voronoi-Rohlin Partitions}

We create Voronoi-Rohlin partitions here, using techniques similar to those
used in \cite{Forrest, Lightwood07, Putnam08, Putnam09}, and analogous to
the way that Rohlin tower partitions are used in \cite[Theorem 2.6]{Putnam95}
for $\mathbb{Z}$-actions. More sophisticated results are in \cite{Putnam09},
we include the necessary arguments here for completeness. We use Voronoi
tilings to tile $\mathbb{Z}^{d}$ for each point $x$, thus partitioning the
points in each $\varphi $-orbit into equivalence classes. The idea is that
for two points $x,y$ in the same equivalence class, we can determine a
vector $v\left( x\right) \in \mathbb{Z}^{d}$ such that $\varphi ^{v\left(
x\right) }\left( x\right) =y$. Since we define $v\left( x\right) $ in a
locally constant\ way, we are able to define a continuous cocycle $v\left(
x\right) $ to serve our purposes.

We begin with some basics of Voronoi tilings associated with an $M$-regular
set $R\subset \mathbb{R}^{d}$. Below, we denote the Euclidean metric in $%
\mathbb{R}^{d}$ by $d$ , and the Euclidean norm by $\left\Vert .\right\Vert $%
.

\begin{definition}
Let $R\subset \mathbb{Z}^{d}$ and $M>0$. We say that $R$ is $M$-regular if

\begin{enumerate}
\item $R$ is $M$-\emph{separated,} that is, for any $v\in R$, if $w\in R$
and $w\neq v$ then $d\left( v,w\right) >M$.

\item $R$ is $2M$-\emph{syndetic}, that is, for any $v\in \mathbb{R}^{d}$,
there is a $w\in R$ such that $d\left( v,w\right) <2M$
\end{enumerate}
\end{definition}

\bigskip

Specifically, suppose $R\subset \mathbb{Z}^{d}$ is an $M$-regular set, and $%
M>2$. For each $p\in \mathbb{R}^{d}$, let $v\left( p\right) $ be the
(typically singleton) set of points $w\in R$ which achieve $%
\min\limits_{w\in R}d\left( p,w\right) =d\left( p,v\left( p\right) \right) $%
. Note that $\min\limits_{w\in R}d\left( p,w\right) $ is well-defined and
uniformly bounded by $2M$ because the set $R$ is $2M$-syndetic. For $w\in R$%
, the tile containing $w$ is the set $T\left( w\right) =\left\{ p\in \mathbb{%
R}^{d}:w\in v\left( p\right) \right\} $. The covering of $\mathbb{R}^{d}$ by
the tiles $\left\{ T\left( w\right) :w\in R\right\} $ is what we refer to as
the \emph{Voronoi tiling }$\tau \left( R\right) $\emph{. }

We note that with this setup, each tile $T\left( w\right) $ is a convex,
compact subset of $\mathbb{R}^{d}$ which is the intersection of a finite
number of closed half-spaces. Because $R\subset \mathbb{Z}^{d}$ and $R$ is $%
2M$-syndetic, there are only finitely many tiles up to translation. That is,
there are only finitely many different sets $P_{1},P_{2},\ldots ,P_{k}$ of
the form $T\left( w\right) -w$ where $w\in R$, The sets $P_{i}$ are referred
to as the \emph{prototiles} of the Voronoi tiling $\tau \left( R\right) $.

The following will serve as an important preliminary result because it will
later be used to bound the number of vectors that are near the boundary of a
tile.

\begin{lemma}
\label{adjacent}Let $d\geq 1$. Then there is a $b>0$ (depending only on $d$)
such that for any $M>2$ and any $M$-regular set $R\subset \mathbb{Z}^{d}$ if 
$\tau \left( R\right) $ is the Voronoi tiling of $\mathbb{R}^{d}$ associated
with $R$ then any tile in $\tau $ intersects at most $b$ other tiles.
\end{lemma}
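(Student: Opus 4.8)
The plan is to reduce this combinatorial statement about adjacent tiles to a Euclidean packing estimate, exploiting that $M$-regularity controls both how close the generating points of two intersecting tiles must be and how many points of $R$ can fit inside a ball. First I would translate ``two tiles intersect'' into a bound on the distance between their centers. Suppose $T\left( w\right) $ and $T\left( w^{\prime }\right) $ with $w\neq w^{\prime }$ in $R$ share a point $p$. By definition of the Voronoi tiling, $w\in v\left( p\right) $ and $w^{\prime }\in v\left( p\right) $, so
\[
d\left( p,w\right) =d\left( p,w^{\prime }\right) =\min_{u\in R}d\left( p,u\right) .
\]
Since $R$ is $2M$-syndetic, this common minimum is strictly less than $2M$, and hence by the triangle inequality $d\left( w,w^{\prime }\right) \leq d\left( p,w\right) +d\left( p,w^{\prime }\right) <4M$. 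Thus every tile adjacent to $T\left( w\right) $ has its center within distance $4M$ of $w$.

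Next I would bound the number of points of $R$ in such a ball by a volume argument that produces a constant depending only on $d$. Because $R$ is $M$-separated, the open balls $\left\{ B\left( u,M/2\right) :u\in R\right\} $ are pairwise disjoint: if $u\neq u^{\prime }$ lie in $R$ then $d\left( u,u^{\prime }\right) >M$, so the balls of radius $M/2$ about them cannot meet. Now let $w^{\prime }\in R$ satisfy $d\left( w,w^{\prime }\right) <4M$; then for any $x\in B\left( w^{\prime },M/2\right) $ we have $\left\Vert x-w\right\Vert <4M+M/2=9M/2$, so $B\left( w^{\prime },M/2\right) \subseteq B\left( w,9M/2\right) $, and the same containment holds for $B\left( w,M/2\right) $ itself. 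Writing $\omega _{d}$ for the volume of the unit ball and comparing Lebesgue measures, the number $N$ of centers $w^{\prime }\in R$ with $d\left( w,w^{\prime }\right) <4M$ (a count which includes $w$) satisfies
\[
N\cdot \omega _{d}\left( M/2\right) ^{d}\leq \omega _{d}\left( 9M/2\right) ^{d},
\]
whence $N\leq 9^{d}$.

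Finally I would conclude by setting $b=9^{d}-1$. Each tile $T\left( w^{\prime }\right) $ intersecting $T\left( w\right) $ with $w^{\prime }\neq w$ contributes a distinct center $w^{\prime }\in R$ with $d\left( w,w^{\prime }\right) <4M$, and there are at most $9^{d}-1$ such centers apart from $w$; hence $T\left( w\right) $ meets at most $9^{d}-1$ other tiles. Since $b=9^{d}-1$ depends only on $d$, the lemma follows. The argument is essentially routine, and I do not expect a serious obstacle; the only point requiring care is that the syndeticity and separation constants enter in a fixed ratio ($2M$ against $M/2$), so that all factors of $M$ cancel in the volume comparison and the resulting bound is genuinely independent of both $M$ and the particular regular set $R$.
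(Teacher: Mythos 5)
Your proof is correct and follows essentially the same route as the paper: both arguments observe that $2M$-syndeticity forces the centers of intersecting tiles to lie within $4M$ of one another, note that $M$-separation makes the balls of radius $M/2$ about the centers pairwise disjoint, and conclude with a packing bound depending only on $d$. The only difference is cosmetic --- you make the constant explicit ($b=9^{d}-1$) via a volume comparison, whereas the paper defines $b$ abstractly as a maximal packing cardinality after rescaling by $1/M$; your explicit computation even sidesteps a harmless slip in the paper's write-up, where the rescaled centers actually lie in $B\left( 0,4\right) $ rather than the stated $B\left( 0,2\right) $.
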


\begin{proof}
Let $B\left( x,r\right) \subset \mathbb{R}^{d}$ denote the ball in $\mathbb{R%
}^{d}$ centered at $x$ and with radius $r$. Let $b$ be the maximum
cardinality of a set $\left\{ y_{1},y_{2},\ldots ,y_{n}\right\} $ such that $%
y_{i}\in B\left( 0,2\right) $ for all $i$, and $B\left( y_{i},1/2\right)
\cap B\left( y_{j},1/2\right) =\emptyset $ for $i\neq j$.

Now suppose $M>2$ and that $R\subset \mathbb{Z}^{d}$ is an $M$-regular set.
Let $T_{0}\ $be a tile in $\tau \left( R\right) $, and let $a\left(
T_{0}\right) =\left\{ T_{1},T_{2},\ldots ,T_{n}\right\} $ be the set of
tiles in $\tau \left( R\right) $ which intersect, but are not equal to $%
T_{0} $. Let $x_{i}\in R$ denote the center of $T_{i}$ for $0\leq i\leq n$.
A tile $T_{i}$ intersects $T_{j}$ if there is a point $p\in \mathbb{R}^{d}$
such that $d\left( p,x_{i}\right) =d\left( p,x_{j}\right) =\min_{v\in
R}d\left( p,v\right) $. Because $R$ is $2M$-syndetic, $d\left(
x_{0},x_{i}\right) <4M$ for all $1\leq i\leq n$. Because $R$ is $M$%
-separated, $d\left( x_{i},x_{j}\right) >M$ meaning that $B\left(
x_{i},M/2\right) \cap B\left( x_{j},M/2\right) =\emptyset $ for any $1\leq
i,j\leq n$.

Now let $y_{i}=\frac{1}{M}\left( x_{i}-x_{0}\right) $. From the above it
follows that the points $y_{i}$ are in $B\left( 0,2\right) $ for $0\leq
i\leq n$ and $B\left( y_{i},1/2\right) \cap B\left( y_{j},1/2\right)
=\emptyset $ for $0\leq i<j\leq n$. Therefore $n\leq b$.
\end{proof}

\bigskip

To construct partitions associated with Voronoi tilings, suppose $\left(
X,\varphi ,\mathbb{Z}^{d}\right) $ is a minimal Cantor system. Let $C\subset
X$ be clopen, and for each $x\in X$ consider the set of return times of $x$
to $C$: $R_{C}\left( x\right) =\left\{ v\in \mathbb{Z}^{d}:\varphi
^{v}\left( x\right) \in C\right\} \subset \mathbb{Z}^{d}$.

\begin{definition}
Let $C\subset X$ be clopen. We say that $C$ is $M$-regular if for all $x\in
X $, the set $R_{C}\left( x\right) $ is $M$-regular.
\end{definition}

The following propositions establish the existence of $M$-regular clopen
sets $C$ with various properties.

\begin{proposition}
\label{sep}Let $\left( X,\varphi ,\mathbb{Z}^{d}\right) $ be a minimal
Cantor system, and let $M>2$. There is an $\varepsilon >0$ such that if $%
C\subset X$ is clopen with $diam(C)<\varepsilon $ then $C$ is $M$-separated.
\end{proposition}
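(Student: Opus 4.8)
The plan is to reduce $M$-separation to a finite, checkable condition and then to exploit freeness of $\varphi$ together with compactness of $X$. Fix a metric $\rho$ on $X$ compatible with its topology, with respect to which $diam$ is computed. I observe first that, for a point $x$, the return-time set $R_{C}(x)$ fails to be $M$-separated precisely when there are distinct $v,w\in R_{C}(x)$ with $d(v,w)\leq M$; writing $u=w-v$ this means $u$ is a nonzero vector of $\mathbb{Z}^{d}$ with $\|u\|\leq M$. Only finitely many such vectors exist, so I set $F=\{u\in\mathbb{Z}^{d}\setminus\{0\}:\|u\|\leq M\}$, a finite set, and the whole argument reduces to controlling $C$ against each $u\in F$ separately.

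Next I would translate the separation condition into a condition on $C$ alone. If $\varphi^{v}(x)\in C$ and $\varphi^{w}(x)\in C$ with $w-v=u$, then setting $y=\varphi^{v}(x)\in C$ gives $\varphi^{u}(y)=\varphi^{w}(x)\in C$, so $y\in C\cap\varphi^{-u}(C)$; conversely any $y\in C\cap\varphi^{-u}(C)$ yields two return times of $y$ at distance $\|u\|$. Hence $C$ is $M$-separated if and only if $C\cap\varphi^{-u}(C)=\emptyset$ for every $u\in F$. The value of this reformulation is that it no longer refers to $x$ or to the return-time sets and involves only the finitely many homeomorphisms $\varphi^{u}$, $u\in F$.

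The heart of the proof is a uniform displacement estimate. For each $u\in F$ the homeomorphism $\varphi^{u}$ has no fixed point, since $u\neq 0$ and $\varphi$ is free; therefore $y\mapsto\rho(y,\varphi^{u}(y))$ is a continuous, strictly positive function on the compact space $X$ and attains a strictly positive minimum $\delta_{u}>0$. I then put $\varepsilon=\min_{u\in F}\delta_{u}$, which is positive as a minimum of finitely many positive numbers. If $C$ is clopen with $diam(C)<\varepsilon$ and $y\in C$, then for every $u\in F$ we have $\rho(y,\varphi^{u}(y))\geq\delta_{u}\geq\varepsilon>diam(C)$, so $\varphi^{u}(y)\notin C$; thus $C\cap\varphi^{-u}(C)=\emptyset$ for all $u\in F$, and by the previous step $C$ is $M$-separated.

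I expect no serious obstacle: apart from the bookkeeping above, the single real ingredient is the passage from fixed-point freeness of each $\varphi^{u}$ to a uniform positive lower bound on the displacement, and this is exactly where compactness of $X$ enters. The one point worth flagging is that this step genuinely uses freeness of the action: if some nonzero $u\in F$ fixed a point $y$, then $y\in C\cap\varphi^{-u}(C)$ for every clopen $C$ containing $y$, and no $\varepsilon$ could work. Thus the conclusion rests on the standing assumption that the systems under consideration are free, and it is freeness rather than minimality that drives the estimate.
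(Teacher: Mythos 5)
Your proof is correct, and it takes a route that is recognizably dual to the paper's rather than identical to it. The paper argues by contradiction with sequences: if the statement fails, there are clopen sets $C_n$ of diameter $<1/n$, points $x_n\in C_n$, and nonzero vectors $v_n$ of norm at most $M$ with $\varphi^{v_n}(x_n)\in C_n$; since only finitely many lattice vectors have norm at most $M$, one passes to a subsequence on which $v_n$ is a constant $v$ and $x_n\to x_0$, whence $\varphi^{v}(x_0)=x_0$, contradicting freeness. You instead argue directly: you reduce $M$-separation of $C$ to the finitely many disjointness conditions $C\cap\varphi^{-u}(C)=\emptyset$ for $0<\|u\|\le M$, and for each such $u$ you apply the extreme value theorem to the strictly positive displacement function $y\mapsto\rho\left(y,\varphi^{u}(y)\right)$ on the compact space $X$, then take $\varepsilon$ to be the minimum over the finite set of vectors. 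The ingredients are identical (finiteness of the ball in $\mathbb{Z}^{d}$, compactness of $X$, freeness), but your version is quantitative --- it produces an explicit $\varepsilon$ --- and dispenses with the subsequence scaffolding; it also handles the borderline case $\|u\|=M$ cleanly, whereas the paper's proof writes $0<\|v_n\|<M$ even though the negation of $M$-separation only yields $\|v_n\|\le M$ (an inconsequential slip). Your closing remark is likewise on target: the hypothesis as stated says only ``minimal Cantor system,'' but both proofs --- the paper's explicitly, in its final sentence --- use freeness, which is a standing assumption on the systems considered in this paper, and without it the statement is false, exactly for the reason you give.
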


\begin{proof}
Suppose not. Then there is an $M>2$ such that for every $n\geq 1$, there is
a clopen set $C_{n}\subset X$, a point $x_{n}\in C_{n}$ and a vector $%
v_{n}\in \mathbb{Z}^{d}$ such that $diam\left( C_{n}\right) <\frac{1}{n}$, $%
\varphi ^{v_{n}}\left( x_{n}\right) \in C_{n}$ and $0<\left\Vert
v_{n}\right\Vert <M$. There is a subsequence $\left\{ x_{n_{k}}\right\} $
such that all vectors $v_{n_{k}}$ are equal to a vector $v$ with $%
0<\left\Vert v\right\Vert <M$, and the sequence $\left\{ x_{n_{k}}\right\} $
converges to a point $x_{0}\in X$. Since $\varphi ^{v}$ is continuous, $%
\lim_{k\rightarrow \infty }\varphi ^{v}\left( x_{n_{k}}\right) =\varphi
^{v}\left( x\right) $. Since $d\left( \varphi ^{v}\left( x_{n_{k}}\right)
,x_{n_{k}}\right) <\frac{1}{n_{k}}$, $\lim_{k\rightarrow \infty }\varphi
^{v}\left( x_{n_{k}}\right) =x$, implying that $\varphi ^{v}\left( x\right)
=x$, which is a contradiction to the freeness of the action.
\end{proof}

\begin{proposition}
\label{synd}Let $\left( X,\varphi ,\mathbb{Z}^{d}\right) $ be a minimal
Cantor system. Let $C$ be any clopen set. There is an $r>0$ such that $C$ is 
$r$-syndetic.
\end{proposition}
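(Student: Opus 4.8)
The plan is to upgrade the pointwise syndeticity that minimality gives for each individual return set $R_{C}\left( x\right) $ to a single bound $r$ valid uniformly in $x$, via a compactness argument. First I would unwind the definitions: $C$ is $r$-syndetic means that for every $x\in X$ and every $v\in \mathbb{R}^{d}$ there is $w\in R_{C}\left( x\right) $ with $d\left( v,w\right) <r$, i.e.\ $\varphi ^{w}\left( x\right) \in C$ with $w$ within distance $r$ of $v$. Since each $v\in \mathbb{R}^{d}$ lies within $\sqrt{d}/2$ of a lattice point, it costs only the additive constant $\sqrt{d}/2$ to restrict attention to $v\in \mathbb{Z}^{d}$. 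Writing $w=v+t$ and putting $y=\varphi ^{v}\left( x\right) $, the condition for lattice $v$ becomes the existence of $t\in \mathbb{Z}^{d}$ with $\|t\|<r$ and $\varphi ^{t}\left( y\right) \in C$. As the pair $\left( x,v\right) $ ranges over $X\times \mathbb{Z}^{d}$ the point $y=\varphi ^{v}\left( x\right) $ ranges over all of $X$, so the whole claim reduces to the single assertion: there is an $r>0$ such that every $y\in X$ has a return time $t$ to $C$ with $\|t\|<r$.

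To produce such an $r$, for each integer $n\geq 1$ I would set $U_{n}=\bigcup_{\|t\|<n}\varphi ^{-t}\left( C\right) $, which is a finite union of clopen sets, hence clopen, and which consists of exactly those $y$ that meet $C$ within the ball of radius $n$. These sets are nested and increasing in $n$. Since $C$ is nonempty and open and $\varphi $ is minimal, every orbit is dense and therefore meets $C$; consequently every $y\in X$ lies in some $U_{n}$, so $\left\{ U_{n}\right\} _{n\geq 1}$ is an open cover of $X$. By compactness of $X$, together with the nesting, there is an $N$ with $X=U_{N}$.

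Taking $r=N+\sqrt{d}/2$ then finishes the argument: given $x\in X$ and $v\in \mathbb{R}^{d}$, choose a lattice point $u$ with $\|v-u\|\leq \sqrt{d}/2$ and apply $X=U_{N}$ to the point $y=\varphi ^{u}\left( x\right) $, obtaining $t$ with $\|t\|<N$ and $\varphi ^{u+t}\left( x\right) \in C$; then $w=u+t$ lies in $R_{C}\left( x\right) $ and satisfies $d\left( v,w\right) \leq \|v-u\|+\|t\|<r$. The only real subtlety is the uniformity over $x$: minimality by itself yields, for each fixed $x$, that $R_{C}\left( x\right) $ is syndetic, but a priori with an $x$-dependent gap; the compactness of $X$ applied to the increasing clopen cover $\left\{ U_{n}\right\} $ is precisely what collapses these pointwise bounds into one $r$ that works for all $x$ at once. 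Unlike Proposition \ref{sep}, freeness of the action plays no role here --- only minimality and compactness are used.
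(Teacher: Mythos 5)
Your proof is correct and takes essentially the same route as the paper: minimality makes the family of translates $\left\{ \varphi ^{v}C\right\} _{v\in \mathbb{Z}^{d}}$ an open cover of $X$, and compactness of $X$ yields the uniform bound $r$; your nested clopen sets $U_{n}$ are simply an explicit organization of the paper's finite subcover. The paper compresses the remaining bookkeeping into ``the result follows,'' which you carry out in full (the reduction from $v\in \mathbb{R}^{d}$ to lattice points at cost $\sqrt{d}/2$ and the translation $y=\varphi ^{u}\left( x\right) $).
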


\begin{proof}
Since $\varphi $ is minimal, $\cup _{v\in \mathbb{Z}^{d}}\varphi ^{v}C$ is
an open cover of $X$. Because $X$ is compact, there is a finite subcover.
The result follows.
\end{proof}

\begin{proposition}
\label{reg}Let $\left( X,\varphi ,\mathbb{Z}^{d}\right) $ be a minimal
Cantor system. Let $C\subset X$ be any nonempty clopen set and $x_{0}\in C$.
Then there is an $M_{0}$ such that if $M>M_{0}$ then there exists a clopen
set $D\subset C$ such that $x_{0}\in D$ and $D$ is $M$-regular.
\end{proposition}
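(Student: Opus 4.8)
The plan is to reformulate $M$-regularity of a clopen set $D$ into two geometric conditions on its translates and then dispatch them separately. Observe that $R_D(x)$ is $M$-separated for every $x$ precisely when $\varphi^u(D)\cap D=\emptyset$ for all $u$ with $0<\|u\|\le M$, and that $R_D(x)$ is $2M$-syndetic for every $x$ precisely when $\bigcup_{\|u\|<2M}\varphi^u(D)=X$. The separation condition is the easy one: by Proposition \ref{sep} (applied with $2M$ in place of $M$, to leave room), there is $\delta>0$ so that every clopen set of diameter less than $\delta$ is $M$-separated, using that the finitely many homeomorphisms $\varphi^u$ with $0<\|u\|\le 2M$ are fixed-point free and $X$ is compact. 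Thus any sufficiently small clopen neighbourhood of $x_0$ inside $C$ is automatically $M$-separated, and the whole difficulty is to secure $2M$-syndeticity at the same time.

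First I would fix such a $\delta$ and a finite clopen partition $\mathcal P$ of $C$ into atoms of diameter less than $\delta$, with the atom $Q_0$ containing $x_0$. I would then take $D$ to be a union of atoms of $\mathcal P$ that contains $Q_0$, is $M$-separated, and is maximal with these two properties (maximality makes sense since there are only finitely many such unions). Separation then holds by construction. For syndeticity I would invoke the packing--covering duality underlying maximal separated sets: if some orbit failed to meet $D$ within radius $2M$, I would produce an atom $Q\subseteq C$ met by that orbit which could be adjoined to $D$ without destroying $M$-separation, contradicting maximality. Here Proposition \ref{synd} enters: letting $r_C$ be a syndeticity constant for $C$, every orbit meets $C$ within radius $r_C$, so once I set $M_0=\max(2,r_C)$ and take $M>M_0$, reaching $C$ costs less than $r_C<M$ and the final step from $C$ into $D$ costs at most $M$, for a total displacement less than $r_C+M<2M$. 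The factor $2$ in the definition of $M$-regularity is exactly the slack that absorbs the cost $r_C$ of first returning to $C$.

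The hard part will be making the conflict produced by maximality apply to the specific orbit point rather than merely to some point of the atom $Q$: maximality yields $u$ with $\varphi^u(Q)\cap D\neq\emptyset$, which is a statement about $Q$, whereas I need $\varphi^u(z)\in D$ for the actual point $z=\varphi^{u_0}(y)\in Q$. I would handle this by choosing the atoms of $\mathcal P$ fine relative to a common modulus of continuity for the finitely many maps $\varphi^u$ with $\|u\|\le 2M$, so that the conflict detected at one point of $Q$ persists, after one further controlled step, on all of $Q$; a compactness argument provides a single fineness that works simultaneously for all atoms, and any leftover displacement is once more absorbed by the factor $2$. I should stress that $D$ genuinely has to depend on $M$: a fixed clopen set is $M$-regular only for $M$ in a bounded window $(\rho/2,m)$ determined by its covering radius $\rho$ and separation radius $m$, so the construction must be redone for each $M$, and it is the maximality (not mere smallness) of $D$ that keeps its covering radius comparable to its separation radius and thereby covers the entire range $M>M_0$.
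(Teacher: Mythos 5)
Your overall strategy coincides with the paper's: use Proposition \ref{sep} so that small clopen pieces are automatically $M$-separated, use Proposition \ref{synd} to set $M_{0}=r$, build $D$ by a greedy/maximality device, and absorb the cost $r+M<2M$ in the factor $2$. The gap is exactly at the point you yourself flag as ``the hard part,'' and the patch you propose does not close it. If $D$ is a maximal $M$-separated union of atoms of a partition $\mathcal{P}$ fixed in advance, maximality only yields, for each atom $Q$ disjoint from $D$, \emph{some} point $z'\in Q$ and some $u$ with $0<\left\Vert u\right\Vert \leq M$ and $\varphi ^{u}(z')\in D$. To transfer this to the orbit point $y\in Q$ you invoke uniform continuity: $\varphi ^{u}(y)$ is metrically close to $\varphi ^{u}(z')\in D$. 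But in a Cantor set, metric proximity to a clopen set implies membership only when that proximity beats the gap $d\left( D,C\setminus D\right) $ between $D$ and its complement, and this gap is determined by $\mathcal{P}$ (indeed by $D$ itself), hence only \emph{after} the partition is chosen --- whereas the fineness you want to impose on $\mathcal{P}$ must be chosen \emph{before}, relative to that very gap. Refining $\mathcal{P}$ shrinks its gap, so the requirement ``atoms fine relative to the modulus of continuity and the gap'' is circular; no single fineness supplied by compactness satisfies it. Nor can a ``further controlled step'' help: no dynamical displacement of bounded size moves a point that is merely metrically near $D$ into $D$. Concretely, atoms of small diameter are (contained in) cylinders over a finite window, and $\varphi ^{u}$ with $0<\left\Vert u\right\Vert \leq M$ probes coordinates outside that window, so two points of the same atom can have entirely different orbit windows of radius $2M$; the conflict detected at $z'$ gives no information about the window of $y$, and the maximality argument therefore does not establish $2M$-syndeticity.

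The paper's proof avoids this by never fixing an atom granularity at all: it enumerates the finitely many $M$-separated clopen pieces $C_{1},\ldots ,C_{I}$ of $C$ given by Proposition \ref{sep} and adjoins them greedily with \emph{pointwise} removal,
\begin{equation*}
D_{n}=D_{n-1}\cup \left( C_{n}\setminus \bigcup_{\left\Vert v\right\Vert
\leq M}\varphi ^{v}D_{n-1}\right) \text{,}
\end{equation*}
the removed set being clopen automatically since $D_{n-1}$ is clopen and the union is finite. Every point discarded at stage $n$ thus carries its own witness $v$ with $\left\Vert v\right\Vert \leq M$ taking it into $D_{n-1}\subseteq D$ --- exactly the pointwise statement that your atom-level maximality fails to deliver --- and separation is checked case by case on the indices of the pieces. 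Syndeticity then follows with total displacement less than $r+M<2M$, as in your accounting. If you replace ``maximal union of atoms of a fixed partition'' by this pointwise greedy construction, the rest of your write-up (the reformulation of regularity, the roles of Propositions \ref{sep} and \ref{synd}, and the role of the factor $2$) goes through as stated.
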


\begin{proof}
By the Proposition \ref{synd}, there is an $r>0$ such that $C$ is $r$%
-syndetic. Let $M_{0}=r$.

Assume that $M>M_{0}$. By Proposition \ref{sep}, we may partition $C$ into
finitely many clopen sets of the form $C_{i}$, $1\leq i\leq I$ where each $%
C_{i}$ is $M$-separated. Without loss of generality, assume $x_{0}\in C_{1}$.

Set $D_{1}=C_{1}$ and for $1<n\leq I$, recursively define: 
\begin{equation*}
D_{n}=D_{n-1}\cup \left( C_{n}\setminus \cup _{\left\Vert v\right\Vert \leq
M}\varphi ^{v}D_{n-1}\right) \text{.}
\end{equation*}

Set $D=D_{I}$. Now suppose that $x,y\in D$ where $y=\varphi ^{w}\left(
x\right) $ with $w\neq 0$.

\textbf{Case 1: }Suppose $x,y\in C_{i}$ for some $i$. Then $\left\Vert
w\right\Vert >M$ since each $C_{i}$ is $M$-separated.

\textbf{Case 2: }Suppose $x\in C_{i}$, $y\in C_{j}$ for $j>i$. Then since $%
x\in D$, we have that $x\in D_{i}$ and therefore $x\in D_{j-1}$. Since $y\in
D$ and $y\in C_{j}$ we have $y\not\in \cup _{\left\Vert v\right\Vert \leq
M}\varphi ^{v}D_{j-1}$. Therefore, $\left\Vert w\right\Vert >M$ and $D$ is $%
M $-separated.

Fix any $x\in X$. To show that $D$ is $2M$-syndetic, it suffices to show
that there is a vector $w$ with $\left\Vert w\right\Vert <2M$ such that $%
\varphi ^{w}\left( x\right) \in D$.

Because $C$ is $M_{0}$-syndetic, we know there is a vector $w$ with $%
\left\Vert w\right\Vert <M_{0}<M$ such that $\varphi ^{w}\left( x\right) \in
C$. Therefore, $\varphi ^{w}\left( x\right) \in C_{i}$ for some $i$. If $%
\varphi ^{w}\left( x\right) \in D_{i}\subset D$, we are done. Otherwise, $%
\varphi ^{w}\left( x\right) \not\in D_{i}$ which implies $\varphi ^{w}\left(
x\right) \in \cup _{\left\Vert v\right\Vert \leq M}\varphi ^{v}D_{i-1}$. But
then there is a $v$ with $\left\Vert v\right\Vert \leq M$ such that $\varphi
^{w}\left( x\right) \in \varphi ^{v}D_{i-1}$ which implies $\varphi
^{w-v}\left( x\right) \in D_{i-1}\subset D$ where $d\left( w,v\right)
<M+M_{0}<2M$.
\end{proof}

Now given $M>2$, we can create a tiling for each point $x\in X$ in the
following way. Fix an $M$-regular clopen set $C$ and for $x\in X$ let $\tau
\left( x,C\right) =\tau \left( R_{C}\left( x\right) \right) $. There will
only be finitely many different prototile sets $P_{1},P_{2},\ldots ,P_{K}$
of the form $T\left( w\right) -w$ where $w\in R_{C}\left( x\right) $ even as
we vary $x$ over the entire space $X$. For $a\in C$, let $P\left( a\right) $
be the prototile containing the origin in $\tau \left( x,C\right) $. Fix a
prototile $P_{k}$ and set $C_{k}=\{a\in C:P\left( a\right) =P_{k}\}$. Then $%
C_{k}=\{a\in C:P\left( a\right) =P_{k}\}$ is clopen because $P\left(
a\right) $ only depends upon the set $B\left( 0,4M\right) \cap R_{C}\left(
a\right) $.

The above gives us a procedure for producing a certain clopen cover $%
\mathcal{A}=\left\{ \varphi ^{w}C_{k}:w\in P_{k},1\leq k\leq K\right\} $ of $%
X$ where $\cup _{k=1}^{K}C_{k}=C$. We note that given the above along with
any finite clopen partition $\mathcal{P}$ of $C$, we will frequently take a
partition 
\begin{equation*}
\mathcal{Q}=\left\{ C_{k}:1\leq k\leq K\right\} \vee \mathcal{P}
\end{equation*}%
of $C$, and consider the cover:%
\begin{equation*}
\mathcal{B}=\left\{ \varphi ^{w}D_{k}^{j}:D_{k}^{j}\subset
C_{k},D_{k}^{j}\in \mathcal{Q},w\in P_{k},1\leq k\leq K\right\}
\end{equation*}%
We will refer to this procedure as "refining $\left\{ C_{k}\right\} $ if
necessary". We note that after such a refinement, one may have $P\left(
x\right) =P\left( y\right) $ for $x,y\in C$ in different partition elements.
Points $x,y\in C$ in the same partition element will always have $P\left(
x\right) =P\left( y\right) $.

What we really need from each tiling $\tau \left( x,C\right) $ is a
partition of $\mathbb{Z}^{d}$. To this end, note that by refining $\left\{
C_{k}\right\} $ if necessary, we may assume that if $x,\varphi ^{v}\left(
x\right) \in C_{k}$ for some $k$, then $\left\Vert v\right\Vert >4M$, which
insures that if $x$ and $\varphi ^{v}\left( x\right) $ are in the same
partition element, the tile centered at the origin in $\tau \left(
x,C\right) $ does not meet the tile centered at $v$. Now fix $x\in C$ and a
tiling $\tau \left( x,X\right) $. For each $w\in R_{C}\left( x\right) $,
consider the set $Z\left( w\right) =T\left( w\right) \cap \mathbb{Z}^{d}$,
the tile centered at $w$ intersected with $\mathbb{Z}^{d}$. The sets $%
\left\{ Z\left( w\right) :w\in R_{C}\left( x\right) \right\} $ need not be
pairwise disjoint. Thus for each $w\in R_{C}\left( x\right) $, we let $%
Z^{\prime }\left( w\right) $ be the set of all elements of $Z\left( w\right) 
$ which do not have the property that $v\in Z\left( u\right) $ with $\varphi
^{w}\left( x\right) \in C_{k}$, $\varphi ^{u}\left( x\right) \in C_{l}$ and $%
l<k$. As a result for each $x\in X$, we obtain a partition $\mathcal{Z}%
\left( x\right) =\left\{ Z^{\prime }\left( w\right) :w\in R_{C}\left(
x\right) \right\} $ of $\mathbb{Z}^{d}$. Again there are only finitely many
different $\mathbb{Z}^{d}$-prototile sets $Z_{1},Z_{2},\ldots ,Z_{K}$ of the
form $Z^{\prime }\left( w\right) -w$ where $w\in R_{C}\left( x\right) $ for $%
x\in X$, and for fixed $k$, the set of points $x$ such that $Z^{\prime
}\left( 0\right) \in \mathcal{Z}\left( x\right) $ is equal to $Z_{k}$ is
clopen.

Mostly inherent in the above discussion is the proof of the following
theorem. Below, $A\vartriangle B$ denotes the symmetric difference of two
sets $A$ and $B$.

\begin{theorem}
\label{voronoi}Let $\left( X,\varphi ,\mathbb{Z}^{d}\right) $ be a minimal
Cantor system. Let $x_{0}\in C\subset X$ where $C$ is clopen. Then there are
integers $L\geq 1$ and $M_{0}>2$, such that given any $M>M_{0}$ there is a
clopen partition $\mathcal{P}=\left\{ \varphi ^{w}C_{k}:w\in Z_{k},1\leq
k\leq K\right\} $ of $X$ with the following properties.

\begin{enumerate}
\item $x_{0}\in \cup _{k=1}^{K}C_{k}\subset C$,

\item for all $k$, $Z_{k}$ contains all vectors $v\in \mathbb{Z}^{d}$ with $%
\left\Vert v\right\Vert \leq M/2$,

\item for all $k$, $Z_{k}$ contains no vectors $v\in \mathbb{Z}^{d}$ with $%
\left\Vert v\right\Vert \geq 2M$,

\item for any $v\in \mathbb{Z}^{d}$, $\left\vert Z_{k}\vartriangle \left(
Z_{k}-v\right) \right\vert <L\left\Vert v\right\Vert M^{d-1}$.
\end{enumerate}
\end{theorem}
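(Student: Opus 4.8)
The plan is to run the Voronoi construction described just above the theorem on a suitable $M$-regular subset of $C$, and then verify the four properties, of which only the last is quantitatively delicate. First I would apply Proposition \ref{reg} to $C$ and $x_{0}$ to obtain $M_{0}$ so that for each $M>M_{0}$ there is an $M$-regular clopen set $D\subseteq C$ with $x_{0}\in D$. Feeding $D$ into the construction preceding the theorem produces the clopen sets $C_{1},\dots ,C_{K}$ with $\bigcup_{k}C_{k}=D$, the refined $\mathbb{Z}^{d}$-prototiles $Z_{k}=Z^{\prime }(w)-w$, and the partition $\mathcal{P}=\{\varphi ^{w}C_{k}:w\in Z_{k}\}$ of $X$. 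Property (1) is then immediate, since $x_{0}\in D=\bigcup_{k}C_{k}\subseteq C$. Throughout I keep in mind that each $Z_{k}$ comes from a genuine Voronoi tile $T=T(w)-w$, a convex polytope equal to a finite intersection of the half-spaces $\{p:\langle p,u\rangle \leq \left\Vert u\right\Vert ^{2}/2\}$ indexed by the neighbouring return-time differences $u\in \mathbb{Z}^{d}$; by $M$-separation each such $u$ has $\left\Vert u\right\Vert >M$, by $2M$-syndeticity $T\subseteq B(0,2M)$, and by Lemma \ref{adjacent} there are at most $b$ facets.

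Properties (2) and (3) I would read off from the geometry of $T$. For (2), if $\left\Vert v\right\Vert \leq M/2$ then for every other center $u$ one has $d(v,u)\geq \left\Vert u\right\Vert -\left\Vert v\right\Vert >M/2\geq d(v,0)$, so $v$ lies strictly inside the central tile; in particular $v$ belongs to no other tile, hence survives the refinement defining $Z^{\prime }(0)$, so $v\in Z_{k}$. For (3), $2M$-syndeticity forces every point of $T$ to lie within $2M$ of its center, so $Z_{k}\subseteq T\cap \mathbb{Z}^{d}$ contains no $v$ with $\left\Vert v\right\Vert \geq 2M$; since refinement only removes points, this persists.

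The heart of the argument, and the one real obstacle, is the cocycle-type estimate (4). Since $v\in \mathbb{Z}^{d}$ we have $(\mathbb{Z}^{d}\cap T)-v=\mathbb{Z}^{d}\cap (T-v)$, so, writing $T_{k}=T\cap \mathbb{Z}^{d}$ for the unrefined prototile, $T_{k}\vartriangle (T_{k}-v)=\mathbb{Z}^{d}\cap (T\vartriangle (T-v))$. If $n\in T$ and $n+v\notin T$ then the segment from $n$ to $n+v$ crosses $\partial T$, whence $d(n,\partial T)\leq \left\Vert v\right\Vert $; thus every lattice point of $T_{k}\vartriangle (T_{k}-v)$ lies within $\left\Vert v\right\Vert $ of one of the at most $b$ facets of $T$. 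For a single facet lying in an integer hyperplane $\{\langle p,u\rangle =c\}$, I would divide $u$ by its gcd to make it primitive: the admissible values of $\langle n,u\rangle $ then fill an interval of length $2\left\Vert v\right\Vert \left\Vert u\right\Vert $, giving at most $2\left\Vert v\right\Vert \left\Vert u\right\Vert +1$ integer levels, while each level meets $B(0,2M)$ in at most $C_{d}M^{d-1}/\left\Vert u\right\Vert +C_{d}M^{d-2}$ points of the rank-$(d-1)$ lattice of covolume $\left\Vert u\right\Vert $ on that hyperplane. The two factors of $\left\Vert u\right\Vert $ cancel in the leading term, and using $\left\Vert u\right\Vert <4M$ the error terms are also $O(\left\Vert v\right\Vert M^{d-1})$, so each facet contributes $O(\left\Vert v\right\Vert M^{d-1})$ and the whole symmetric difference $O(b\left\Vert v\right\Vert M^{d-1})$. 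Finally, passing from $T_{k}$ to the refined $Z_{k}$ alters the set only within the lattice points on $\partial T$, of which there are $O(M^{d-2})$, and these are absorbed into $\left\Vert v\right\Vert M^{d-1}$ for $v\neq 0$; I thereby obtain $\left\vert Z_{k}\vartriangle (Z_{k}-v)\right\vert \leq L\left\Vert v\right\Vert M^{d-1}$ with $L$ a dimensional constant, the case $v=0$ being trivial. The delicate point is precisely the uniformity of $L$ in both $M$ and $v$; it is guaranteed by the cancellation of $\left\Vert u\right\Vert $ noted above, the dimensional facet bound of Lemma \ref{adjacent}, and the containment $T\subseteq B(0,2M)$ furnished by $2M$-syndeticity.
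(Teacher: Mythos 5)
Your proof follows the paper's own route: apply Proposition \ref{reg} to get an $M$-regular clopen set $D\subseteq C$ containing $x_{0}$, run the Voronoi--Rohlin construction preceding the theorem, deduce (1)--(3) from $M$-separation and $2M$-syndeticity exactly as the paper does, and prove (4) by covering $Z_{k}\vartriangle \left( Z_{k}-v\right) $ by the $\left\Vert v\right\Vert $-neighborhoods of the at most $b$ facets supplied by Lemma \ref{adjacent}, your level-by-level lattice count merely filling in what the paper compresses into ``the bound in 4 follows.'' The only slip is your claim that $\partial T$ carries $O(M^{d-2})$ lattice points --- a facet of diameter $<4M$ lying in an integer hyperplane (e.g.\ a coordinate hyperplane) can carry $\Theta (M^{d-1})$ of them --- but this is harmless: an $O(M^{d-1})$ correction is still absorbed into $L\left\Vert v\right\Vert M^{d-1}$, since $\left\Vert v\right\Vert \geq 1$ for $v\neq 0$.
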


\begin{proof}
Given $C$ as in the hypothesis, by Proposition \ref{reg}, there is an $M_{0}$
such that for $M>M_{0}$ there is an $M$-regular clopen set $D$ containing $%
x_{0}$. Let $P_{1},P_{2},\ldots ,P_{K}$ be the prototiles appearing in $\tau
\left( R_{D}\left( x\right) \right) $ for $x\in X$, and $C_{k}=\{x\in
C:P\left( x\right) =P_{k}\}$. Via the procedure described after Proposition %
\ref{reg}, we obtain clopen partition $\mathcal{P}=\left\{ \varphi
^{w}C_{k}:w\in Z_{k},1\leq k\leq K\right\} $ of $X$. Note that any
difference between $P_{k}\cap \mathbb{Z}^{d}$ and $Z_{k}$ takes place only
if we have an intersection of two sets $\varphi ^{w}C_{k}$ and $\varphi
^{v}C_{l}$ in the original cover and in this case $\left\Vert w\right\Vert
=\left\Vert v\right\Vert >M/2$. Thus the intersection of the original $P_{k}$
with the ball around the origin of radius $M/2$ is not affected, and it
follows that $Z_{k}$ satisfy properties 2 and 3. By Lemmas \ref{adjacent},
the collection of vectors in $Z_{k}\vartriangle \left( Z_{k}-v\right) $ is
the union of at most $b$ sets, each of which is a collection of $\mathbb{Z}%
^{d}$-vectors within distance $\left\Vert v\right\Vert $ of a subset of a $%
\left( d-1\right) $-dimensional hyperplane in $\mathbb{R}^{d}$. Because
these hyperplane subsets are within the tile $P_{k}$, they have diameter
less than $4M$. The bound in 4 follows.
\end{proof}

We refer to a paritition $\mathcal{P}=\left\{ \varphi ^{w}C_{k}:w\in
Z_{k},1\leq k\leq K\right\} $ with the properties described above as a \emph{%
Voronoi-Rohlin partition} \emph{centered at }$C$. In the next section we
will use the notation $P_{k}$ in place of $Z_{k}$ for the integer partitions.

\subsection{The Ordered Group $\left( G\left( \protect\varphi \right)
,G\left( \protect\varphi \right) _{+}\right) $}

Given $\left( X,\varphi ,\mathbb{Z}^{d}\right) $ a minimal free Cantor
system, consider the group $C\left( X,\mathbb{Z}\right) $ under addition,
generated by indicator functions $1_{A}$ of clopen sets $A\subset X$. Let $%
B\left( \varphi \right) \subset C\left( X,\mathbb{Z}\right) $ denote the set
of $\varphi $\emph{-coboundaries}, i.e., functions which are sums of
functions of the form $1_{A}-1_{\varphi ^{v}A}$ where $A$ is clopen and $%
v\in \mathbb{Z}^{d}$.

\begin{definition}
\label{Gphi} We set: 
\begin{eqnarray*}
G\left( \varphi \right) &=&C\left( X,\mathbb{Z}\right) /B\left( \varphi
\right) \\
G\left( \varphi \right) _{+} &=&\left\{ \left[ f\right] :f\left( x\right)
\geq 0\text{ for all }x\in X\right\}
\end{eqnarray*}
\end{definition}

The pair $\left( G\left( \varphi \right) ,G\left( \varphi \right)
_{+}\right) $ is an \emph{ordered group}$\emph{,}$ that is, $G\left( \varphi
\right) $ is a countable abelian group, and $G\left( \varphi \right) _{+}$
is a subset of $G\left( \varphi \right) $ satisfying the following \cite%
{Forrest}.

\begin{enumerate}
\item $G\left( \varphi \right) _{+}+G\left( \varphi \right) _{+}\subset
G\left( \varphi \right) _{+}$

\item $G\left( \varphi \right) _{+}+\left( -G\left( \varphi \right)
_{+}\right) =G\left( \varphi \right) $

\item $G\left( \varphi \right) _{+}\cap \left( -G\left( \varphi \right)
_{+}\right) =0$
\end{enumerate}

\begin{notation}
For $g_{1}$ and $g_{2}$ in an ordered group $\left( G,G_{+}\right) $, we
will use the notation

\begin{itemize}
\item $g_{2}\geq g_{1}$ if $g_{2}-g_{1}\in G_{+}$

\item $g_{2}>g_{1}$ if $g_{2}\geq g_{1}$ and $g_{2}\neq g_{1}$.
\end{itemize}
\end{notation}

We need to employ Voronoi-Rohlin partitions in order to make use of the
ordered group $\left( G\left( \varphi \right) ,G\left( \varphi \right)
_{+}\right) $ as an invariant for bounded orbit injection equivalence. The
next two are the main lemmas along these lines.

\begin{lemma}
\label{lem:ineq}Let $\left( X,\varphi ,\mathbb{Z}^{d}\right) $ be a minimal
Cantor system, and let $f,g\in C\left( X,\mathbb{Z}\right) $. Suppose $\left[
f\right] <\left[ g\right] $ in $\left( G\left( \varphi \right) ,G\left(
\varphi \right) _{+}\right) $. Then there are constants $c>0$ and $M_{0}>2$
such that if $M>M_{0}$, $C$ is $M$-regular and $\mathcal{P}=\left\{ \varphi
^{w}C_{k}:w\in P_{k},1\leq k\leq K\right\} $ is a Voronoi-Rohlin partition
centered at $C$ then for $1\leq k\leq K$ and $x\in C_{k}$, 
\begin{equation*}
\sum_{v\in P_{k}}g\varphi ^{v}\left( x\right) -\sum_{v\in P_{k}}f\varphi
^{v}\left( x\right) >cM^{d}
\end{equation*}
\end{lemma}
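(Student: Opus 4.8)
The plan is to reduce the sum over the tile to a sum of $h\circ\varphi^{v}$ for a fixed nonnegative representative $h$ of the class $[g]-[f]$, and to control the resulting coboundary error by the boundary estimate of Theorem~\ref{voronoi}(4). Since $[g]-[f]>0$ in $(G(\varphi),G(\varphi)_{+})$, by the definition of $G(\varphi)_{+}$ there is a representative $h\in C(X,\mathbb{Z})$ of $[g-f]$ with $h\geq 0$ everywhere; write $g-f=h+b$ where $b=\sum_{i=1}^{N}(1_{A_{i}}-1_{\varphi^{u_{i}}A_{i}})$ lies in $B(\varphi)$. These choices of $h$ and $b$ depend only on $f$ and $g$, not on $M$ or $C$, which is what will make the constants $c$ and $M_{0}$ uniform. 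Because $[g-f]\neq 0$, the function $h$ is not a coboundary, so in particular $h\not\equiv 0$; as $h$ is integer-valued and nonnegative, the clopen set $U=\{x:h(x)\geq 1\}$ is nonempty and satisfies $h\geq 1_{U}$ pointwise.

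First I would bound the main term from below. By Proposition~\ref{synd}, $U$ is $r$-syndetic for some fixed $r>0$, so for every $x$ the return set $R_{U}(x)=\{v\in\mathbb{Z}^{d}:\varphi^{v}(x)\in U\}$ meets every ball of radius $r$ in $\mathbb{Z}^{d}$. A standard packing argument then yields a constant $c'>0$ (depending only on $r$ and $d$) with $|R_{U}(x)\cap B(0,M/2)|\geq c'M^{d}$: place a $2r$-spaced grid of points in $B(0,M/4)$, choose for each grid point a nearby element of $R_{U}(x)$ within distance $r$, and observe that these elements are distinct and lie in $B(0,M/2)$ once $M>4r$. By Theorem~\ref{voronoi}(2) we have $B(0,M/2)\cap\mathbb{Z}^{d}\subseteq P_{k}$, so for $x\in C_{k}$ this gives $\sum_{v\in P_{k}}h\varphi^{v}(x)\geq\sum_{v\in P_{k}}1_{U}(\varphi^{v}x)=|R_{U}(x)\cap P_{k}|\geq c'M^{d}$.

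Next I would bound the coboundary term. For each elementary coboundary one has $1_{\varphi^{u_{i}}A_{i}}(\varphi^{v}x)=1_{A_{i}}(\varphi^{v-u_{i}}x)$, so summing over $v\in P_{k}$ telescopes into the difference of the sum of $1_{A_{i}}$ over $P_{k}$ and over $P_{k}-u_{i}$; in absolute value this is at most $|P_{k}\triangle(P_{k}-u_{i})|$, which by Theorem~\ref{voronoi}(4) is less than $L\|u_{i}\|M^{d-1}$. Summing over $i$ gives $|\sum_{v\in P_{k}}b\varphi^{v}(x)|\leq C''M^{d-1}$ with $C''=L\sum_{i=1}^{N}\|u_{i}\|$, a constant independent of $M$, $C$, $k$, and $x$. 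Combining the two estimates yields $\sum_{v\in P_{k}}(g-f)\varphi^{v}(x)\geq c'M^{d}-C''M^{d-1}$, and choosing $M_{0}$ large enough that $C''M^{d-1}<\tfrac{1}{2}c'M^{d}$ for all $M>M_{0}$ gives the claim with $c=c'/2$.

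The step I expect to require the most care is verifying that all constants are genuinely uniform, i.e. independent of $M$, of $C$, and of the particular partition. Here one must use that $L$ in Theorem~\ref{voronoi}(4) depends only on $d$ (it arises from Lemma~\ref{adjacent} together with a lattice-point count near a bounded piece of a hyperplane), that $r$ is fixed once $U$ is fixed, and that the decomposition $g-f=h+b$ is chosen before any partition is introduced. The apparent difficulty that $h\geq 0$ by itself yields only a nonnegative lower bound is precisely what the syndeticity of $U$ resolves: it upgrades the qualitative fact that $h$ is not a coboundary into the quantitative $\Omega(M^{d})$ frequency of visits needed to dominate the $O(M^{d-1})$ boundary error.
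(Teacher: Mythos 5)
Your proof is correct and follows essentially the same route as the paper's: the same decomposition $g-f=h+\sum_{i}\left( 1_{A_{i}}-1_{\varphi ^{v(i)}A_{i}}\right)$ with $h\geq 0$ supported on a nonempty clopen set, the same syndeticity-based $\Omega \left( M^{d}\right)$ lower bound on the $h$-term using property 2 of Theorem \ref{voronoi}, the same bound $L\left\Vert v\left( i\right) \right\Vert M^{d-1}$ on each coboundary term via property 4, and the same final choice of $M_{0}$ and $c$. The only difference is that you make explicit the packing argument behind the volume constant (your $c^{\prime }$, the paper's $\ell $), which the paper asserts without detail.
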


\begin{proof}
We have that $g-f=h+\sum_{i=1}^{I}\left( 1_{A_{i}}-1_{\varphi
^{v(i)}A_{i}}\right) $ where $h\left( x\right) \geq 0$ for all $x\in X$, and
the $A_{i}$ are clopen. Since $\left[ f\right] <\left[ g\right] $ there is a
clopen set $U$ upon which $h\left( x\right) >0$. If $C$ is $M$-separated
then each $P_{k}$ contains all integer lattice points which are within $M/2$
of the origin. Because $U$ is $r$-syndetic for some $r>0$, we know that
there is a constant $\ell $ such that if $M$ is sufficiently large and $C$
is any $M$-separated set, then for all $x\in X$, 
\begin{equation*}
\sum_{v\in P_{k}}h\varphi ^{v}\left( x\right) >\ell M^{d}
\end{equation*}%
On the other hand for each $i$, 
\begin{eqnarray*}
\left\vert \sum_{v\in P_{k}}\left( 1_{A_{i}}-1_{\varphi ^{v(i)}A_{i}}\right)
\left( \varphi ^{v}\left( x\right) \right) \right\vert &=&\left\vert
\sum_{v\in P_{k}}1_{A_{i}}\left( \varphi ^{v}\left( x\right) \right)
-1_{A_{i}}\left( \varphi ^{v\mathbf{-}v\left( i\right) }\left( x\right)
\right) \right\vert \\
&\leq &\left\vert P_{k}\vartriangle \left( P_{k}-v\left( i\right) \right)
\right\vert
\end{eqnarray*}

By Theorem \ref{voronoi}, we have%
\begin{equation*}
\left\vert P_{k}\vartriangle \left( P_{k}-v\left( i\right) \right)
\right\vert \leq LM^{d-1}\left\Vert v\left( i\right) \right\Vert
\end{equation*}

If $M_{0}$ is large enough to insure $c=\ell -\frac{1}{M}L\sum_{i=1}^{I}%
\left\Vert v\left( i\right) \right\Vert >0$, then 
\begin{eqnarray*}
\sum_{v\in P_{k}}g\varphi ^{v}\left( x\right) -\sum_{v\in P_{k}}f\varphi
^{v}\left( x\right) &=&\sum_{v\in P_{k}}h\varphi ^{v}\left( x\right)
+\sum_{v\in P_{k}}\left( 1_{A_{i}}-1_{\varphi ^{v(i)}A_{i}}\right) \left(
\varphi ^{v}\left( x\right) \right) \\
&>&\ell M^{d}-\sum_{i=1}^{I}LM^{d-1}\left\Vert v\left( i\right) \right\Vert
\\
&=&cM^{d}
\end{eqnarray*}
\end{proof}

\begin{lemma}
\label{lem:ineq2}Let $\left( X,\varphi ,\mathbb{Z}^{d}\right) $ be a minimal
Cantor system, and let $f,g\in C\left( X,\mathbb{Z}\right) $. Suppose $%
\mathcal{P}=\left\{ \varphi ^{w}C_{k}:w\in P_{k},1\leq k\leq K\right\} $ is
a Voronoi-Rohlin partition centered at $C$ such that for all $x\in C_{k}$, 
\begin{equation*}
\sum_{v\in P_{k}}g\varphi ^{v}\left( x\right) \geq \sum_{v\in P_{k}}f\varphi
^{v}\left( x\right)
\end{equation*}%
then $\left[ g\right] \geq \left[ f\right] $. Moreover, if the above
inequality above holds for all $x\in C_{k}$, and for some $k$ and $x\in
C_{k} $ the inequality is strict then $\left[ g\right] >\left[ f\right] $.
\end{lemma}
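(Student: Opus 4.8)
The plan is to prove the contrapositive-free version directly, exploiting the fact that $\left(G\left(\varphi\right), G\left(\varphi\right)_+\right)$ has no "small positive elements" that escape detection by Voronoi-Rohlin sums. The key obstacle is that the hypothesis gives control over $g - f$ only through the \emph{finite} sums $\sum_{v \in P_k}(g-f)\varphi^v(x)$, whereas the conclusion $[g] \geq [f]$ is a statement about the existence of a coboundary decomposition in $C(X,\mathbb{Z})$. My strategy is to run Lemma \ref{lem:ineq} in reverse: assume toward a contradiction that $[g] < [f]$ does \emph{not} hold, i.e.\ $[g] - [f] \notin G\left(\varphi\right)_+$, and show this forces a Voronoi-Rohlin sum of $f - g$ to be strictly positive somewhere, contradicting the hypothesis $\sum_{v \in P_k} g\varphi^v(x) \geq \sum_{v \in P_k} f\varphi^v(x)$.

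First I would set $\phi = g - f \in C(X,\mathbb{Z})$ and suppose, for contradiction, that $[\phi] \notin G\left(\varphi\right)_+$. By the trichotomy properties (2) and (3) of the ordered group, together with the definition of $G\left(\varphi\right)_+$, the failure of $[\phi] \geq 0$ means precisely that $[-\phi] = [f - g]$ satisfies $[f-g] > 0$ \emph{or} $[\phi]$ and $[-\phi]$ are incomparable; but since property (2) guarantees $G\left(\varphi\right) = G\left(\varphi\right)_+ + (-G\left(\varphi\right)_+)$, every element is comparable to $0$ only if the group is totally ordered, which it need not be. The honest route is therefore: $[\phi] \notin G\left(\varphi\right)_+$ means $\phi$ is not cohomologous to a nonnegative function, and I would instead apply Lemma \ref{lem:ineq} to a genuinely strictly-smaller pair. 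Concretely, if $[g] \geq [f]$ fails then $[f] \not\leq [g]$, and I would produce a strict inequality $[g'] < [f]$ for a suitable $g'$ (e.g.\ shrinking $g$ slightly on a clopen set where the order fails) so that Lemma \ref{lem:ineq} yields, for all sufficiently large $M$ and all Voronoi-Rohlin partitions centered at a fine enough $C$, the bound $\sum_{v \in P_k} f\varphi^v(x) - \sum_{v \in P_k} g'\varphi^v(x) > cM^d$ for some $c > 0$ and every $x \in C_k$.

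The decisive point is then a \emph{refinement} argument: the hypothesis of Lemma \ref{lem:ineq2} is assumed for a \emph{particular} partition $\mathcal{P}$, but the inequality $\sum_{v \in P_k}(g-f)\varphi^v(x) \geq 0$ passes to any Voronoi-Rohlin refinement, because each tile $P_k$ of a finer partition is obtained by subdividing the clopen base $C$ and the partial sums over tiles are additive over the orbit decomposition. Since Lemma \ref{lem:ineq} forces a uniform strict deficit of order $M^d$ on arbitrarily fine partitions while the hypothesis forbids any deficit, we reach a contradiction once $M > M_0$ is large enough that $cM^d > 0$. This establishes $[g] \geq [f]$. For the moreover clause, I would observe that if the tile-sum inequality is \emph{strict} for some $k$ and some $x \in C_k$, then $[g] = [f]$ is impossible: equality of classes would force, via the already-proven direction applied to both $g - f$ and $f - g$, that every Voronoi-Rohlin tile-sum of $g - f$ vanishes identically, contradicting the strictness at the witnessing point. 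Hence $[g] > [f]$. The main obstacle throughout is handling the non-total ordering of $G\left(\varphi\right)$ cleanly — I must reduce the abstract order failure to a concrete strictly-smaller comparison to which Lemma \ref{lem:ineq} directly applies, rather than assuming comparability of $[\phi]$ with $0$ for free.
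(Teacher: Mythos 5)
Your proposal misses the key idea, and the contradiction scheme you outline cannot be completed. The paper's proof is direct and short: for any $h\in C\left( X,\mathbb{Z}\right) $ define the collapse of $h$ onto the base of the given partition,
\begin{equation*}
\widehat{h}\left( x\right) =\sum_{k=1}^{K}\sum_{w\in P_{k}}1_{C_{k}}\left(
x\right) \cdot h\varphi ^{w}\left( x\right) \text{,}
\end{equation*}
and observe the identity
\begin{equation*}
h-\widehat{h}=\sum_{k=1}^{K}\sum_{w\in P_{k}}\left( 1_{C_{k}}\varphi
^{-w}\cdot h-1_{C_{k}}\cdot h\varphi ^{w}\right) \text{,}
\end{equation*}
in which each summand has the form $F-F\varphi ^{w}$ with $F=1_{\varphi
^{w}C_{k}}\cdot h\in C\left( X,\mathbb{Z}\right) $, hence $h-\widehat{h}\in
B\left( \varphi \right) $. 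The hypothesis of the lemma says \emph{verbatim}
that $\widehat{g}-\widehat{f}$ is a nonnegative function, so $\left[ g\right]
-\left[ f\right] =\left[ \widehat{g}-\widehat{f}\right] \in G\left( \varphi
\right) _{+}$ by the very definition of the positive cone; if moreover $
\widehat{g}-\widehat{f}$ is positive at one point then its class is nonzero
and $\left[ g\right] >\left[ f\right] $. No appeal to Lemma \ref{lem:ineq}
and no contradiction argument is needed: the partition itself supplies the
nonnegative representative.

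Your route has three concrete gaps. First, and fatally, Lemma \ref{lem:ineq}
needs a \emph{strict} order relation as input, but the negation of $\left[ g
\right] \geq \left[ f\right] $ in the merely partially ordered group $
G\left( \varphi \right) $ yields no comparison at all; your proposed repair
--- shrinking $g$ on ``a clopen set where the order fails'' --- is not a
construction, since failure of positivity of a class in the quotient $
C\left( X,\mathbb{Z}\right) /B\left( \varphi \right) $ does not localize to
any clopen subset of $X$, and you never produce the pair to which Lemma \ref
{lem:ineq} would apply. Second, even granted such a pair, you must transfer
the hypothesis from the \emph{one fixed} partition $\mathcal{P}$ to the
arbitrarily coarse (large $M$) partitions that Lemma \ref{lem:ineq} speaks
about; your claimed exact additivity is false, because a large Voronoi tile
is not a disjoint union of complete blocks $v+P_{k}$ of the fixed partition
--- one gets nonnegative complete-block sums plus uncontrolled boundary
blocks, which would have to be estimated as $O\left( M^{d-1}\right) $
against the $cM^{d}$ deficit, an argument you do not make (and note you need
\emph{coarser} partitions, not refinements in the paper's sense, which only
subdivide the base sets). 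Third, your ``moreover'' step invokes the
implication that $\left[ g\right] =\left[ f\right] $ forces all tile sums of
$g-f$ over the given partition to vanish; that is the converse of the
direction being proved, it is not available, and it is false for a fixed
partition (a nonzero coboundary has zero class but generically nonzero tile
sums).
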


\begin{proof}
Suppose $\mathcal{P}$ is a Voronoi-Rohlin partition satisfying the above
properties. For any function $h\in C\left( X,\mathbb{Z}\right) $, let $%
\widehat{h}\in C\left( X,\mathbb{Z}\right) $ be the following function 
\begin{equation*}
\widehat{h}\left( x\right) =\sum_{k=1}^{K}\sum_{w\in P_{k}}1_{C_{k}}\left(
x\right) \cdot h\varphi ^{w}\left( x\right)
\end{equation*}%
Now 
\begin{eqnarray*}
h-\widehat{h} &=&\sum_{k=1}^{K}\sum_{w\in P_{k}}1_{\varphi ^{w}C_{k}}\cdot
h-\sum_{k=1}^{K}\sum_{w\in P_{k}}1_{C_{k}}\cdot h\varphi ^{w} \\
&=&\sum_{k=1}^{K}\sum_{w\in P_{k}}\left( 1_{C_{k}}\varphi ^{-w}\cdot
h-1_{C_{k}}\cdot h\varphi ^{w}\right)
\end{eqnarray*}%
So $h-\widehat{h}\in B\left( \varphi \right) $ for any function $h\in
C\left( X,\mathbb{Z}\right) $.

The assumptions assert that $\widehat{g}-\widehat{f}$ is a nonnegative
function, which means that $\left[ g\right] \geq \left[ f\right] $. If in
addition, $\widehat{g}-\widehat{f}$ evaluates to a positive value at one $%
x\in X$, then $\left[ g\right] -\left[ f\right] \neq 0$, which implies $%
\left[ g\right] >\left[ f\right] $.
\end{proof}

In what follows we will use the following properties of $\left( G\left(
\varphi \right) ,G\left( \varphi \right) _{+}\right) $

\begin{enumerate}
\item $\left( G\left( \varphi \right) ,G\left( \varphi \right) _{+}\right) $
is \emph{weakly unperforated}, i.e., if $n\left[ f\right] >0$ with $n\in 
\mathbb{Z}$ and $\left[ f\right] \in G\left( \varphi \right) $ then $\left[ f%
\right] >0$.

\item $\left( G\left( \varphi \right) ,G\left( \varphi \right) _{+}\right) $
has the strong \emph{Reisz property}, i.e., if $\left[ g_{1}\right] $, $%
\left[ g_{2}\right] $, $\left[ f_{1}\right] $, $\left[ f_{2}\right] $ are in 
$G\left( \varphi \right) $ with $\left[ f_{j}\right] >\left[ g_{i}\right] $
for all $i,j\in \left\{ 1,2\right\} $ then there exists an $\left[ h\right]
\in G$ such that $\left[ f_{j}\right] >\left[ h\right] >\left[ g_{i}\right] $
for all $i,j\in \left\{ 1,2\right\} $.

\item $\left( G\left( \varphi \right) ,G\left( \varphi \right) _{+}\right) $
is \emph{simple}, i.e.,\emph{\ }if for every $\left[ f\right] \in G\left(
\varphi \right) _{+}\setminus \left\{ 0\right\} $ and $\left[ g\right] \in
G\left( \varphi \right) $, there is an $n\in \mathbb{N}$ such that $n\left[ f%
\right] >\left[ g\right] $.
\end{enumerate}

These properties were proven in \cite{Forrest}, along with the claim that $%
G\left( \varphi \right) $ is torsion-free. The torsion-freeness part of the
proof was later shown to be incorrect, leading to some interesting
developments, see \cite{Matui08}. Nevertheless, it is true that the above
properties hold for $\left( G\left( \varphi \right) ,G\left( \varphi \right)
_{+}\right) $; for example, they follow from Lemma \ref{lem:ineq2}.

\subsection{The ordered group as an invariant}

In the following, we use Voronoi-Rohlin partitions to prove that if: $\left(
X,\varphi ,\mathbb{Z}^{d}\right) $ and $\left( Y,\psi ,\mathbb{Z}^{d}\right) 
$ are bounded orbit injection equivalent then: 
\begin{equation*}
\left( G\left( \varphi \right) ,G\left( \varphi \right) _{+}\right) \cong
\left( G\left( \psi \right) ,G\left( \psi \right) _{+}\right) \text{.}
\end{equation*}

\begin{theorem}
\label{thm:iso}Let $d\geq 1$, and suppose $\left( X,\varphi ,\mathbb{Z}%
^{d}\right) $ and $\left( Y,\psi ,\mathbb{Z}^{d}\right) $ are bounded orbit
injection equivalent. Then $\left( G\left( \varphi \right) ,G\left( \varphi
\right) _{+}\right) \cong \left( G\left( \psi \right) ,G\left( \psi \right)
_{+}\right) $.
\end{theorem}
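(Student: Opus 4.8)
The plan is to reduce the statement to a single bounded orbit injection and show that such an injection induces an order isomorphism between the associated groups. By Definition \ref{BOIE} there is a minimal free Cantor system $\left( Z,\alpha ,\mathbb{Z}^{d}\right) $ together with bounded orbit injections $\theta _{\varphi }:X\rightarrow Z$ and $\theta _{\psi }:Y\rightarrow Z$. Thus it suffices to prove: if $\theta :\left( X,\varphi ,\mathbb{Z}^{d}\right) \rightarrow \left( Z,\alpha ,\mathbb{Z}^{d}\right) $ is a bounded orbit injection, then the ``extension by zero'' construction induces an isomorphism $\theta _{\ast }:\left( G\left( \varphi \right) ,G\left( \varphi \right) _{+}\right) \rightarrow \left( G\left( \alpha \right) ,G\left( \alpha \right) _{+}\right) $ of ordered groups; composing $\left( \theta _{\psi }\right) _{\ast }^{-1}\circ \left( \theta _{\varphi }\right) _{\ast }$ then yields the theorem.

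First I would construct the map. Writing $X^{\prime }=\theta (X)$, which is clopen in $Z$, I extend $f\in C\left( X,\mathbb{Z}\right) $ to $\widetilde{f}\in C\left( Z,\mathbb{Z}\right) $ by $\widetilde{f}\left( \theta (x)\right) =f(x)$ and $\widetilde{f}\equiv 0$ off $X^{\prime }$. To see that $f\mapsto \lbrack \widetilde{f}]$ descends to $G\left( \varphi \right) \rightarrow G\left( \alpha \right) $, I check that coboundaries go to coboundaries: for clopen $A\subseteq X$ and $v\in \mathbb{Z}^{d}$, partition $A$ into finitely many clopen pieces $A=\bigsqcup_{j}A_{j}$ on which the continuous cocycle $\eta \left( \cdot ,v\right) $ is constant equal to $h_{j}$; since $\theta \left( \varphi ^{v}A_{j}\right) =\alpha ^{h_{j}}\theta \left( A_{j}\right) $, one gets $\widetilde{1_{A}}-\widetilde{1_{\varphi ^{v}A}}=\sum_{j}\left( 1_{\theta \left( A_{j}\right) }-1_{\alpha ^{h_{j}}\theta \left( A_{j}\right) }\right) \in B\left( \alpha \right) $. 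Positivity of $\theta _{\ast }$ is immediate, since $f\geq 0$ forces $\widetilde{f}\geq 0$. Surjectivity I would obtain from minimality: the nonempty clopen set $X^{\prime }$ meets every $\alpha $-orbit, so finitely many translates $\alpha ^{v_{1}}X^{\prime },\ldots ,\alpha ^{v_{n}}X^{\prime }$ cover $Z$; disjointifying and using the relation $\left[ h\right] _{\alpha }=\left[ h\circ \alpha ^{v}\right] _{\alpha }$ (valid for any $h\in C\left( Z,\mathbb{Z}\right) $ and any $v$), every class $\left[ g\right] _{\alpha }$ has a representative supported in $X^{\prime }$, hence of the form $[\widetilde{f}]_{\alpha }$.

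The core of the proof is that $\theta _{\ast }$ is injective and order-reflecting, and here I would use the Voronoi-Rohlin machinery. The key geometric fact is that $\theta $ sends the $\varphi $-orbit of $x$ bijectively, via $v\mapsto \eta \left( x,v\right) $, onto the intersection of the $\alpha $-orbit of $\theta (x)$ with $X^{\prime }$. Consequently, given a Voronoi-Rohlin partition of $\left( Z,\alpha ,\mathbb{Z}^{d}\right) $ centered at a small clopen $C\subseteq X^{\prime }$, its tiles pull back through $\eta \left( x,\cdot \right) ^{-1}$ to an orbit partition of $\left( X,\varphi ,\mathbb{Z}^{d}\right) $ centered at $\theta ^{-1}(C)$, with only finitely many prototiles because the cocycle is bounded. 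Because $\widetilde{f}$ vanishes off $X^{\prime }$, the $\alpha $-tile sum $\sum_{w\in P_{k}}\widetilde{f}\left( \alpha ^{w}\theta (x)\right) $ equals \emph{exactly} the corresponding $\varphi $-tile sum $\sum_{v\in Q_{k}(x)}f\left( \varphi ^{v}x\right) $. Running this correspondence together with Lemma \ref{lem:ineq} and Lemma \ref{lem:ineq2} in both directions, I would transfer strict positivity: $\left[ f\right] _{\varphi }>0$ exactly when $[\widetilde{f}]_{\alpha }>0$. The vanishing case I would also read off the same correspondence, using that the telescoping identity $h-\widehat{h}\in B\left( \varphi \right) $ from the proof of Lemma \ref{lem:ineq2} holds for \emph{arbitrary} clopen orbit partitions, so that $\left[ f\right] _{\varphi }=0$ can be detected through the pulled-back tile sums rather than through invariant measures; this is essential since $G\left( \varphi \right) $ may carry torsion and the order alone does not see incomparable elements.

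I expect the main obstacle to be precisely this last step. The pulled-back partition is not literally a Voronoi-Rohlin partition of $\left( X,\varphi ,\mathbb{Z}^{d}\right) $, so I must either verify directly that it is admissible as input to Lemma \ref{lem:ineq2} (it is an honest clopen orbit partition, which is all that proof uses) or compare it to a genuine $X$-Voronoi-Rohlin partition and absorb the discrepancy into the $cM^{d}$ versus $LM^{d-1}$ margins supplied by the two lemmas. The delicate point is that the cocycle distorts tiles: I must check that $\eta \left( x,\cdot \right) $ is coarsely bi-Lipschitz (the upper bound is the boundedness of $\theta $; the lower bound follows from $\eta \left( x,\cdot \right) $ being a bijection onto the syndetic return-time set $R_{X^{\prime }}\left( \theta (x)\right) $), so that the boundary estimates and tile-size controls of Theorem \ref{voronoi} survive the transfer. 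Handling the order-zero case so that it yields genuine injectivity, rather than merely equality of all invariant-measure values, is the subtlest part, and is where the exactness of the tile-sum correspondence is used decisively.
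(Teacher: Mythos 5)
Most of your proposal is sound and, up to the last step, parallels the paper's own proof: the reduction to a single bounded orbit injection, the extension-by-zero map $\theta _{\ast }$, the push-forward of coboundaries by splitting $A$ along the level sets of $\eta \left( \cdot ,v\right) $, and positivity are exactly the paper's construction of $h_{\theta }$. Your surjectivity argument (covering $Z$ by finitely many translates $\alpha ^{v_{i}}\theta \left( X\right) $ and using $\left[ h\right] _{\alpha }=\left[ h\circ \alpha ^{v}\right] _{\alpha }$) is a genuine shortcut the paper does not use. The transfer of \emph{strict} positivity also works as you describe, provided you refine the base sets $C_{k}$ so that the inverse cocycle is constant on each piece; then the pulled-back tiles are independent of the point, the pulled-back family is an honest clopen Rohlin-type partition of $X$, and you are right that the proof of Lemma \ref{lem:ineq2} uses only that structure.

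The genuine gap is injectivity, precisely the step you flag as subtlest, and the mechanism you propose cannot close it. Injectivity says: if $\widetilde{f}\in B\left( \alpha \right) $ then $f\in B\left( \varphi \right) $. Tile sums cannot detect membership in $B\left( \alpha \right) $: the coboundary $1_{A}-1_{\alpha ^{v}A}$ has tile sums that are nonzero in general, bounded only by $\left\vert P_{k}\vartriangle \left( P_{k}-v\right) \right\vert =O\left( M^{d-1}\right) $. So from $[\widetilde{f}]_{\alpha }=0$ your exact correspondence yields only that the pulled-back tile sums of $f$ are $O\left( M^{d-1}\right) $, and smallness of tile sums never implies vanishing of a class: infinitesimal and torsion classes (which exist by \cite{Matui08}) are exactly the classes invisible to such estimates and to the order, since a nonzero torsion element is comparable to $0$ in neither direction. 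The same $O\left( M^{d-1}\right) $ error of indeterminate sign blocks non-strict order reflection, which would otherwise give injectivity via $G_{+}\cap \left( -G_{+}\right) =\left\{ 0\right\} $; Lemma \ref{lem:ineq2} needs tile sums $\geq 0$, not $\geq -O\left( M^{d-1}\right) $. What is needed, and what constitutes the central step of the paper's proof, is the \emph{reverse} coboundary statement: using a Voronoi-Rohlin partition refining $\left\{ \theta \left( X\right) ,Z\setminus \theta \left( X\right) \right\} $, one collapses any $g\in C\left( Z,\mathbb{Z}\right) $ onto the base ($g\mapsto \widehat{g}$, supported in $\theta \left( X\right) $) and proves that an $\alpha $-coboundary, after collapsing, pulls back through $\theta ^{-1}$ to a $\varphi $-coboundary; here one uses that the inverse cocycle (the $v$ with $\theta \varphi ^{v}\left( x\right) =\alpha ^{\widehat{v}}\theta \left( x\right) $) is continuous and finitely valued, so each $1_{\widehat{B}}-1_{\alpha ^{\widehat{v}}\widehat{B}}$ pulls back to a finite sum of $\varphi $-coboundaries. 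This makes $\left[ g\right] _{\alpha }\mapsto [\widehat{g}\circ \theta ]_{\varphi }$ well defined on classes, after which it inverts $\theta _{\ast }$ on generators and injectivity is immediate. Your surjectivity trick produces representatives supported in $\theta \left( X\right) $ but never shows that this assignment is well defined on classes; that is the same missing pull-back of coboundaries, and without it the kernel of $\theta _{\ast }$ could a priori contain a torsion or infinitesimal class that none of your estimates can see.
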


\begin{proof}
It suffices to consider the case where there is a bounded orbit injection $%
\theta :X\rightarrow Y$. Consider the homomorphism $h_{\theta }:C\left( X,%
\mathbb{Z}\right) \rightarrow G\left( \psi \right) $ defined on generators
by $h_{\theta }\left( 1_{A}\right) =\left[ 1_{\theta A}\right] $ for any
clopen set $A\subset X$.

To show that $h_{\theta }$ gives a well-defined homomorphism on $G\left(
\varphi \right) $, we wish to show that $h_{\theta }\left( 1_{A}-1_{\varphi
^{v}A}\right) =\left[ 0\right] $ for any clopen set $A\subset X$ and $v\in $ 
$\mathbb{Z}^{d}$. A clopen set $A$ partitions into finitely many clopen sets 
$A_{i}$ such that for each $i$, there is a $v\left( i\right) $ with $\theta
\varphi ^{v}\left( x\right) =\psi ^{v\left( i\right) }\theta \left( x\right) 
$ for all $x\in A_{i}$. Therefore, 
\begin{eqnarray*}
h\left( 1_{A}-1_{\varphi ^{v}A}\right) &=&\left[ 1_{\theta A}-1_{\theta
\varphi ^{v}A}\right] \\
&=&\sum_{i}\left[ 1_{\theta A_{i}}-1_{\theta \varphi ^{v}A_{i}}\right] \\
&=&\sum_{i}\left[ 1_{\theta A_{i}}-1_{\psi ^{v\left( i\right) }\theta A_{i}}%
\right] \\
&=&0
\end{eqnarray*}

Now we will construct an inverse homomorphism. Apply Proposition \ref{reg}
to $\theta \left( X\right) $ to obtain an $M$-regular subset $C$ of $\theta
\left( X\right) $ with $M>2$, and an associated Voronoi-Rohlin partition
centered at $C$, $\mathcal{P}=\left\{ \psi ^{w}C_{k}:w\in P_{k}\right\} $.
We can assume, after partitioning the clopen sets $C_{k}$ further if
necessary, that each partition element $\psi ^{w}C_{k}$ is a subset of $%
\theta \left( X\right) $ or $\theta \left( X\right) ^{c}$. From this, the
set $\theta \left( X\right) ^{c}$ is a disjoint union of clopen sets of the
form $\psi ^{w}C_{k}$, i.e., 
\begin{equation*}
\theta \left( X\right) ^{c}=\cup _{k=1}^{K}\cup _{j=1}^{J\left( k\right)
}\psi ^{w\left( j,k\right) }C_{k}
\end{equation*}%
where $w\left( j,k\right) \in P_{k}$ for all $\left( j,k\right) $. Let $f\in
C\left( Y,\mathbb{Z}\right) $. Then let $\widehat{f}\in C\left( Y,\mathbb{Z}%
\right) $ be the following function 
\begin{equation*}
\widehat{f}\left( x\right) =\left\{ 
\begin{tabular}{ll}
$0$ & if $x\in Y\setminus \theta \left( X\right) $ \\ 
$f\left( x\right) $ & if $x\in \theta \left( X\right) \setminus C$ \\ 
$f\left( x\right) +\sum_{k=1}^{K}\sum_{j=1}^{J\left( k\right)
}1_{C_{k}}\left( x\right) \cdot f\psi ^{w\left( j,k\right) }\left( x\right) $
& if $x\in C$%
\end{tabular}%
\right.
\end{equation*}%
It is not difficult to see that $f\mapsto \widehat{f}$ is a homomorphism,
and therefore the map $g_{\theta }:C\left( Y,\mathbb{Z}\right) \rightarrow
G\left( \varphi \right) $ defined by $g_{\theta }:f\mapsto \left[ \widehat{f}%
\theta \right] $ is a homomorphism. We wish to see that $g_{\theta }$
applied to a $\psi $-coboundary is equal to $\left[ 0\right] $ in $G\left(
\varphi \right) $. Recall that any $\psi $-coboundary is equal to the sum of
functions of the form $1_{B_{i}}-1_{\psi ^{v\left( i\right) }B_{i}}$ where
the sets $B_{i}\subset Y$ are clopen and vectors $v\left( i\right) \in $ $%
\mathbb{Z}^{d}$; note that the $B_{i}$ need not be distinct nor disjoint.
Further, by subdividing the sets if necessary, we may assume that each set $%
B_{i}$ and $\psi ^{v\left( i\right) }B_{i}$ is a subset of an element of the
Voronoi-Rohlin partition $\mathcal{P}=\left\{ \psi ^{w}C_{k}:w\in
P_{k},1\leq k\leq K\right\} $. Fix $i$ and set $B=B_{i}$, $v=v\left(
i\right) $. Then $B\subset \psi ^{w}C_{k}$ for some $w$, $k$. If $B\subset
\theta \left( X\right) $, then $\widehat{1}_{B}=1_{B}$ since $1_{B}\equiv 0$
on $Y\setminus \theta \left( X\right) $. If $B\subset Y\setminus \theta
\left( X\right) $, then $\widehat{1}_{B}=1_{C_{k}}\cdot 1_{B}\circ \psi
^{w}=1_{\psi ^{-w}B}$. In either case there is a vector $u$ such that $%
\widehat{1}_{B}=1_{\psi ^{u}B}$ and $\psi ^{u}B\subset \theta \left(
X\right) $. A similar fact is true of $\psi ^{v}B$, there is a vector $t$
such that $\widehat{1}_{\psi ^{v}B}=1_{\psi ^{t}B}$ with $\psi ^{t}B\subset
\theta \left( X\right) $. Setting $\widehat{B}=\psi ^{u}B$, and $\widehat{v}%
=t-u$ we have $\widehat{1}_{B}-\widehat{1}_{\psi ^{v}B}=1_{\widehat{B}%
}-1_{\psi ^{\widehat{v}}\widehat{B}}$.

Now let us consider $1_{\widehat{B}}\theta -1_{\psi ^{\widehat{v}}\widehat{B}%
}\theta =1_{\theta ^{-1}\widehat{B}}-1_{\theta ^{-1}\psi ^{\widehat{v}}%
\widehat{B}}$. Because $\widehat{B}$ and $\psi ^{\widehat{v}}\widehat{B}$
are subsets of $\theta \left( X\right) $, for each $x\in \theta ^{-1}%
\widehat{B}$ there is a vector $\alpha \left( x\right) $ such that $\theta
\varphi ^{\alpha \left( x\right) }\left( x\right) =\psi ^{\widehat{v}}\theta
\left( x\right) $. Further the function $\alpha $ is continuous and takes on
finitely many values $u\left( 1\right) ,u\left( 2\right) ,\ldots ,u\left(
I\right) $. Therefore, $1_{\theta ^{-1}\widehat{B}}-1_{\theta ^{-1}\psi ^{%
\widehat{v}}\widehat{B}}$ is a finite sum of the form $1_{A\left( i\right)
}-1_{\varphi ^{u\left( i\right) }A\left( i\right) }$ where $A\left( i\right) 
$ is the clopen set $A\left( i\right) =\left\{ x:\alpha \left( x\right)
=u\left( i\right) \right\} $. It follows that $g_{\theta }$ applied to a $%
\psi $-coboundary is a $\varphi $-coboundary. In particular this means that $%
g_{\theta }:G\left( \psi \right) \rightarrow G\left( \varphi \right) $ is
well-defined.

Now consider $g_{\theta }h_{\theta }$ applied to a function $1_{A}$ where $%
A\subset X$ is clopen. Then since $\theta \left( A\right) \subset \theta
\left( X\right) $, $\widehat{1}_{\theta A}=1_{\theta A}$. Further since $%
\theta $ is injective, $\theta ^{-1}\theta A=A$. Thus, 
\begin{eqnarray*}
g_{\theta }h_{\theta }\left[ 1_{A}\right] &=&g_{\theta }\left[ 1_{\theta A}%
\right] \\
&=&\left[ \widehat{1}_{\theta A}\theta \right] \\
&=&\left[ 1_{\theta A}\theta \right] \\
&=&\left[ 1_{\theta ^{-1}\theta A}\right] \\
&=&\left[ 1_{A}\right]
\end{eqnarray*}

Now consider $h_{\theta }g_{\theta }$ applied to a function $1_{B}$ where $%
B\subset Y$ is clopen and $B\subset \psi ^{w}C_{k}$ for some $w$, $k$. Then
recall that $\widehat{1}_{B}=1_{\psi ^{u}B}$ for some $u$ with $\psi
^{u}B\subset \theta \left( X\right) $%
\begin{eqnarray*}
h_{\theta }g_{\theta }\left[ 1_{B}\right] &=&h_{\theta }\left[ \widehat{1}%
_{B}\theta \right] \\
&=&h_{\theta }\left[ 1_{\psi ^{u}B}\theta \right] \\
&=&h_{\theta }\left[ 1_{\theta ^{-1}\psi ^{u}B}\right] \\
&=&\left[ 1_{\theta \theta ^{-1}\psi ^{u}B}\right] \\
&=&\left[ 1_{\psi ^{u}B}\right] \\
&=&\left[ 1_{B}\right]
\end{eqnarray*}%
Therefore, $g_{\theta }h_{\theta }$ and $h_{\theta }g_{\theta }$ are both
identity maps and $G\left( \varphi \right) $, $G\left( \psi \right) $ are
isomorphic.

To see that the positive cones are preserved, consider $f\in C\left( X,%
\mathbb{Z}\right) $ with $f\left( x\right) \geq 0$ for all $x\in X$. Then $%
f=\sum c_{i}1_{A_{i}}$ where $c_{i}>0$, $A_{i}$ are clopen. $h_{\theta }%
\left[ f\right] =\left[ \sum c_{i}1_{\theta A_{i}}\right] \in G\left( \psi
\right) _{+}$.

Conversely, suppose $f\in C\left( Y,\mathbb{Z}\right) $ and $f\left(
y\right) \geq 0$ for all $y\in Y$. Then $\widehat{f}\left( y\right) \geq 0$
for all $y\in Y$, and $\widehat{f}\theta \left( x\right) \geq 0$ for all $%
x\in X$. Thus $g_{\theta }\left[ f\right] \in G\left( \varphi \right) _{+}$.
\end{proof}

Suppose $\left( X,\varphi ,\mathbb{Z}^{d}\right) $ and $\left( Y,\psi ,%
\mathbb{Z}^{d}\right) $ are minimal Cantor systems which are bounded orbit
injection equivalent by virtue of bounded orbit injections into a common
system $\left( Z,\alpha ,\mathbb{Z}^{d}\right) $. Below we give a condition
on the isomorphisms created in Theorem \ref{thm:iso} which guarantees that
there is a bounded orbit injection from $\left( X,\varphi ,\mathbb{Z}%
^{d}\right) $ into $\left( Y,\psi ,\mathbb{Z}^{d}\right) $. This in turn,
leads to a proof that bounded orbit injection equivalence is in fact an
equivalence relation for $d>2$ (the cases where $d=1,2$ are already covered
by \cite{Lightwood07}).

\begin{theorem}
\label{small}Suppose there exist bounded orbit injections $\theta _{1}$ and $%
\theta _{2}$ from systems $\left( X,\varphi ,\mathbb{Z}^{d}\right) $ and $%
\left( Y,\psi ,\mathbb{Z}^{d}\right) $ into $\left( Z,\alpha ,\mathbb{Z}%
^{d}\right) $, and that $h_{\theta _{1}}\left[ 1_{X}\right] <h_{\theta _{2}}%
\left[ 1_{Y}\right] $. Then there is a bounded orbit injection from $\left(
X,\varphi ,\mathbb{Z}^{d}\right) $ into $\left( Y,\psi ,\mathbb{Z}%
^{d}\right) $.
\end{theorem}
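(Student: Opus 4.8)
The plan is to work inside the common system $\left( Z,\alpha ,\mathbb{Z}^{d}\right) $ and to build an $\alpha $-orbit preserving clopen injection between the two ranges. Write $A=\theta _{1}(X)$ and $B=\theta _{2}(Y)$, which are clopen subsets of $Z$, and recall from the construction of $h_{\theta _{i}}$ in Theorem \ref{thm:iso} that $h_{\theta _{1}}\left[ 1_{X}\right] =\left[ 1_{A}\right] $ and $h_{\theta _{2}}\left[ 1_{Y}\right] =\left[ 1_{B}\right] $, so the hypothesis reads $\left[ 1_{A}\right] <\left[ 1_{B}\right] $ in $\left( G\left( \alpha \right) ,G\left( \alpha \right) _{+}\right) $. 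I claim it suffices to produce a continuous open injection $F:A\rightarrow B$ such that $F(a)$ lies in the $\alpha $-orbit of $a$ for every $a\in A$, with locally constant displacement cocycle $c:A\rightarrow \mathbb{Z}^{d}$ (so that $F(a)=\alpha ^{c(a)}(a)$). Indeed, since $\theta _{1}:X\rightarrow A$ and $\theta _{2}:Y\rightarrow B$ are homeomorphisms, $\zeta =\theta _{2}^{-1}\circ F\circ \theta _{1}$ is then a continuous open injection $X\rightarrow Y$; and because $\theta _{1},\theta _{2}$ are orbit injections and $F$ preserves $\alpha $-orbits, the equivalence (\ref{oicocycle}) for $\zeta $ reduces to the tautology that $a,a^{\prime }$ share an $\alpha $-orbit if and only if $F(a),F(a^{\prime })$ do. Finiteness of the range of $c$ together with continuity of the cocycles of $\theta _{1},\theta _{2}$ will make the cocycle of $\zeta $ continuous, so $\zeta $ will be a bounded orbit injection.

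To build $F$ I would first apply Lemma \ref{lem:ineq} to $\left( Z,\alpha ,\mathbb{Z}^{d}\right) $ with $f=1_{A}$ and $g=1_{B}$: since $\left[ 1_{A}\right] <\left[ 1_{B}\right] $, there are $c_{0}>0$ and $M_{0}>2$ so that for every $M>M_{0}$ and every Voronoi-Rohlin partition $\mathcal{P}=\left\{ \alpha ^{w}C_{k}:w\in P_{k},\ 1\leq k\leq K\right\} $ of $Z$ centered at an $M$-regular clopen set $C$, one has for each $k$ and each $x\in C_{k}$
\begin{equation*}
\#\left\{ v\in P_{k}:\alpha ^{v}(x)\in B\right\} -\#\left\{ v\in P_{k}:\alpha ^{v}(x)\in A\right\} >c_{0}M^{d}>0\text{.}
\end{equation*}
Fix such an $M$ and such a partition. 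Each tile $\left\{ \alpha ^{v}(x):v\in P_{k}\right\} $ (with $x\in C_{k}$) is a finite orbit segment of diameter at most $4M$ by property (3) of Theorem \ref{voronoi}, and it contains strictly more points of $B$ than of $A$. Refining the sets $C_{k}$ if necessary, I may assume that each partition element $\alpha ^{w}C_{k}$ lies entirely in $A$ or in its complement, and entirely in $B$ or in its complement; then as $x$ ranges over a single refined $C_{k}$ the position sets $\left\{ v\in P_{k}:\alpha ^{v}(x)\in A\right\} $ and $\left\{ v\in P_{k}:\alpha ^{v}(x)\in B\right\} $ are constant.

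Now I define $F$ one tile at a time. For each refined $C_{k}$, the combinatorial type of its tiles --- the finite set $P_{k}$ together with the fixed $A$- and $B$-position sets --- is one of finitely many possibilities, and by the displayed inequality the $A$-positions are no more numerous than the $B$-positions; I choose once and for all an injection $\iota _{k}$ from the $A$-positions into the $B$-positions. For $a=\alpha ^{w}(x)\in A$ with $x\in C_{k}$ and $w\in P_{k}$, set $F(a)=\alpha ^{\iota _{k}(w)}(x)$, so that $c(a)=\iota _{k}(w)-w$. This $c$ is locally constant, since it depends only on the refined cell of $x$ and on $w$, and $\left\Vert c(a)\right\Vert \leq 4M$, so the cocycle is bounded. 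Within a single tile $F$ is injective by the choice of $\iota _{k}$; since distinct tiles are disjoint orbit segments partitioning $Z$ and $F$ keeps each point in its own tile, $F$ is globally injective. Its image is a finite union of $\alpha $-translates of clopen cells, hence clopen, and $F$ is a homeomorphism onto it. This yields the required $F:A\rightarrow B$, and $\zeta =\theta _{2}^{-1}\circ F\circ \theta _{1}$ is the desired bounded orbit injection.

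The main obstacle is to carry out the tilewise matching so that it is simultaneously globally injective, locally constant (hence delivering a continuous bounded cocycle), and an open map with clopen image. The Voronoi-Rohlin machinery is exactly what resolves this: tiles partition $Z$ into disjoint bounded orbit segments of only finitely many combinatorial types, so a single combinatorial choice of injection per type globalizes to a continuous map, while disjointness of tiles upgrades per-tile injectivity to global injectivity. The only quantitative input needed is that every tile carries at least as many $B$-points as $A$-points, which is precisely what $\left[ 1_{A}\right] <\left[ 1_{B}\right] $ delivers through Lemma \ref{lem:ineq}. A final routine verification is that the cocycle of $\zeta $, expressed through $c$ and the continuous cocycles of $\theta _{1}$ and $\theta _{2}$, is continuous, confirming that $\zeta $ is a bounded orbit injection.
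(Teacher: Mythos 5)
Your proposal is correct and follows essentially the same route as the paper: both apply Lemma \ref{lem:ineq} to $1_{\theta_{1}(X)}$ and $1_{\theta_{2}(Y)}$ to get a Voronoi-Rohlin partition in which every tile contains strictly more $\theta_{2}(Y)$-positions than $\theta_{1}(X)$-positions, refine so the partition respects both clopen sets, choose a per-tile injection of $A$-positions into $B$-positions ($\iota_{k}$ in your notation, $\rho_{k}$ in the paper's), and conjugate the resulting full-group-type map by $\theta_{1}$ and $\theta_{2}^{-1}$. Your write-up simply makes explicit the verifications (global injectivity, clopen image, continuity of the cocycle) that the paper leaves to the reader.
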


\begin{proof}
By Lemma \ref{lem:ineq}, there is a Voronoi-Rohlin partition:%
\begin{equation*}
\mathcal{P}=\left\{ \alpha ^{v}C_{k}:w\in P_{k},1\leq k\leq K\right\}
\end{equation*}
of $Z$ centered at $C=\cup _{k=1}^{K}C_{k}$ such that for $x\in C_{k}$, 
\begin{equation*}
\sum_{v\in P_{k}}1_{\theta _{2}\left( Y\right) }\alpha ^{v}\left( x\right)
>\sum_{v\in P_{k}}1_{\theta _{1}\left( X\right) }\alpha ^{v}\left( x\right)
\end{equation*}%
By further refining if necessary, we may assume $\mathcal{P}$ refines both $%
\left\{ \theta _{1}\left( X\right) ,Z\setminus \theta _{1}\left( X\right)
\right\} $ and $\left\{ \theta _{2}\left( Y\right) ,Z\setminus \theta
_{2}\left( Y\right) \right\} $. Now for each $k$, we define an injection 
\begin{equation*}
\rho _{k}:\left\{ v\in P_{k}:\alpha ^{v}C_{k}\subset \theta _{1}\left(
X\right) \right\} \rightarrow \left\{ v\in P_{k}:\alpha ^{v}C_{k}\subset
\theta _{2}\left( Y\right) \right\}
\end{equation*}%
For $x\in \alpha ^{v}C_{k}\cap \theta _{1}\left( X\right) \in \mathcal{P}$
set $\pi \left( x\right) =\alpha ^{\rho _{k}\left( v\right) -v}\left(
x\right) $. Then $\pi \left( \theta _{1}\left( X\right) \right) \subset
\theta _{2}\left( Y\right) $ and we have a bounded orbit injection from $%
\left( X,\varphi ,\mathbb{Z}^{d}\right) $ into $\left( Y,\psi ,\mathbb{Z}%
^{d}\right) $ defined by $\theta _{2}^{-1}\pi \theta _{1}$.
\end{proof}

Let $m>0$. By the \emph{tower of height }$\emph{m}$\emph{\ over }$\left(
X,\varphi ,\mathbb{Z}^{d}\right) $, we mean the system of the form $\left(
X\left( m\right) ,\widehat{\varphi },\mathbb{Z}^{d}\right) $ where $X\left(
m\right) =X\times \left\{ 0,1,\ldots ,m-1\right\} ^{d}$ and $\widehat{%
\varphi }$ is defined by the following. Let $x\in X$, $v\in \mathbb{Z}^{d}$
and $u\in \left\{ 0,1,\ldots ,m-1\right\} ^{d}$. Then $v+u$ can be written
in the form $mw+r$ where $w\in \mathbb{Z}^{d}$ and $r\in \left\{ 0,1,\ldots
,m-1\right\} ^{d}$. We then define $\widehat{\varphi }^{v}\left( x,u\right)
=\left( \varphi ^{w}\left( x\right) ,r\right) $. Note that $X\times \left\{
0,1,\ldots ,m-1\right\} ^{d}$ is a Cantor set, and that if $\varphi $ is a
minimal free $\mathbb{Z}^{d}$-action of $X$, then $\widehat{\varphi }$ is a
minimal free $\mathbb{Z}^{d}$-action of $X\left( m\right) $. Further note
that there is a bounded orbit injection from $\left( X,\varphi ,\mathbb{Z}%
^{d}\right) $ into a tower of height $m$ over $\left( X,\varphi ,\mathbb{Z}%
^{d}\right) $ given by $\theta \left( x\right) =\left( x,0\right) $. In
this, the isomorphism $h_{\theta }:G\left( \varphi \right) \rightarrow
G\left( \widehat{\varphi }\right) $ satisfies $m^{d}h_{\theta }\left[ 1_{X}%
\right] =\left[ 1_{X\left( m\right) }\right] $.

\begin{lemma}
\label{tower}Suppose there exist bounded orbit injections $\theta _{1}$ and $%
\theta _{2}$ from systems $\left( X,\varphi ,\mathbb{Z}^{d}\right) $ and $%
\left( Y,\psi ,\mathbb{Z}^{d}\right) $ into $\left( Z,\alpha ,\mathbb{Z}%
^{d}\right) $. Then for some $m>0$ there is a bounded orbit injection from $%
\left( X,\varphi ,\mathbb{Z}^{d}\right) $ into a tower of height $m$ over $%
\left( Y,\psi ,\mathbb{Z}^{d}\right) $.
\end{lemma}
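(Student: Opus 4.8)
The plan is to reduce to Theorem~\ref{small} by passing to the tower $\left( Z(m),\widehat{\alpha },\mathbb{Z}^{d}\right) $ of height $m$ over the common target $\left( Z,\alpha ,\mathbb{Z}^{d}\right) $; the point is that a tower multiplies the relevant order unit by $m^{d}$, and simplicity of $\left( G(\alpha ),G(\alpha )_{+}\right) $ lets us choose $m$ so large that the order unit of $Y$ overtakes that of $X$. First I would build two bounded orbit injections into $Z(m)$. Composing $\theta _{1}$ with the canonical bounded orbit injection $\iota \colon z\mapsto (z,0)$ of $Z$ into $Z(m)$ gives a bounded orbit injection $\sigma _{1}=\iota \circ \theta _{1}\colon X\rightarrow Z(m)$ (the cocycles of orbit injections compose continuously). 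Next I would lift $\theta _{2}$ to the towers by $\widehat{\theta }_{2}\colon Y(m)\rightarrow Z(m)$, $\left( y,u\right) \mapsto \left( \theta _{2}(y),u\right) $: this is a continuous open injection with clopen range $\theta _{2}(Y)\times \{0,\dots ,m-1\}^{d}$, and unwinding the tower normal form $v+u=mw+r$ shows that its cocycle sends $\left( (y,u),v\right) $ to $v+m\left( \eta _{2}(y,w)-w\right) $, where $\eta _{2}$ is the continuous cocycle of $\theta _{2}$; as this is continuous, $\widehat{\theta }_{2}$ is a bounded orbit injection. All four systems here are minimal free Cantor, as towers preserve these properties.

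Second, I would compute the two order units in $G(\widehat{\alpha })$ through the order isomorphism $h_{\iota }\colon G(\alpha )\rightarrow G(\widehat{\alpha })$, which satisfies $m^{d}h_{\iota }[1_{Z}]=[1_{Z(m)}]$. Since $h_{\sigma _{1}}=h_{\iota }\circ h_{\theta _{1}}$, we get $h_{\sigma _{1}}[1_{X}]=h_{\iota }\left( h_{\theta _{1}}[1_{X}]\right) $. For the second unit, note that $\theta _{2}(Y)\times \{u\}=\widehat{\alpha }^{u}\left( \theta _{2}(Y)\times \{0\}\right) $ for each $u\in \{0,\dots ,m-1\}^{d}$, so the indicators of these $m^{d}$ pieces are pairwise cohomologous and $h_{\widehat{\theta }_{2}}[1_{Y(m)}]=m^{d}h_{\iota }\left( h_{\theta _{2}}[1_{Y}]\right) $. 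Because $h_{\iota }$ is an order isomorphism, the inequality $h_{\sigma _{1}}[1_{X}]<h_{\widehat{\theta }_{2}}[1_{Y(m)}]$ in $G(\widehat{\alpha })$ is equivalent to $h_{\theta _{1}}[1_{X}]<m^{d}\,h_{\theta _{2}}[1_{Y}]$ in $G(\alpha )$.

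Finally I would secure this inequality and invoke Theorem~\ref{small}. The element $h_{\theta _{2}}[1_{Y}]=[1_{\theta _{2}(Y)}]$ lies in $G(\alpha )_{+}\setminus \{0\}$ --- it is nonzero because any $\alpha $-invariant probability measure assigns the nonempty clopen set $\theta _{2}(Y)$ positive mass and descends to a positive homomorphism on $G(\alpha )$. By simplicity of $\left( G(\alpha ),G(\alpha )_{+}\right) $ there is an $n\in \mathbb{N}$ with $n\,h_{\theta _{2}}[1_{Y}]>h_{\theta _{1}}[1_{X}]$; choosing $m$ with $m^{d}\geq n$ then yields $h_{\theta _{1}}[1_{X}]<m^{d}\,h_{\theta _{2}}[1_{Y}]$, hence $h_{\sigma _{1}}[1_{X}]<h_{\widehat{\theta }_{2}}[1_{Y(m)}]$. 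Theorem~\ref{small}, applied to $\sigma _{1}\colon X\rightarrow Z(m)$ and $\widehat{\theta }_{2}\colon Y(m)\rightarrow Z(m)$, then produces a bounded orbit injection from $\left( X,\varphi ,\mathbb{Z}^{d}\right) $ into $\left( Y(m),\widehat{\psi },\mathbb{Z}^{d}\right) $, the tower of height $m$ over $\left( Y,\psi ,\mathbb{Z}^{d}\right) $. I expect the main obstacle to be the bookkeeping of the first two paragraphs: verifying from the normal form $v+u=mw+r$ that $\widehat{\theta }_{2}$ carries a genuinely continuous cocycle, and tracking that the lifted range contributes exactly the factor $m^{d}$ to the order unit.
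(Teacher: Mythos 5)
Your proposal is correct and follows essentially the same route as the paper: both proofs construct the two bounded orbit injections $x\mapsto \left( \theta _{1}\left( x\right) ,0\right) $ and $\left( y,u\right) \mapsto \left( \theta _{2}\left( y\right) ,u\right) $ into the tower of height $m$ over $\left( Z,\alpha ,\mathbb{Z}^{d}\right) $, use simplicity of $\left( G\left( \alpha \right) ,G\left( \alpha \right) _{+}\right) $ to pick $n$ and then $m$ with $m^{d}\geq n$ so that the order unit inequality $h_{\theta _{1}}\left[ 1_{X}\right] <m^{d}h_{\theta _{2}}\left[ 1_{Y}\right] $ holds, and conclude via Theorem \ref{small}. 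Your write-up merely makes explicit some details the paper asserts without proof (the continuity of the lifted cocycle and the factor $m^{d}$ in the order unit computation), which is fine.
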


\begin{proof}
By the fact that $G\left( \alpha \right) $ is simple, there is an $n>0$ such
that $h_{\theta _{1}}\left[ 1_{X}\right] <nh_{\theta _{2}}\left[ 1_{Y}\right]
$. Fix $m>0$ so that $m^{d}>n$. Notice that there is a bounded orbit
injection $\widehat{\theta }_{1}$ from $\left( X,\varphi ,\mathbb{Z}%
^{d}\right) $ into a tower of height $m$ over $\left( Z,\alpha ,\mathbb{Z}%
^{d}\right) $ defined by $\widehat{\theta }_{1}\left( x\right) =\left(
\theta _{1}\left( x\right) ,0\right) $. Also notice that there is a bounded
orbit injection $\widehat{\theta }_{2}$ from the tower of height $m$ over $%
\left( Y,\psi ,\mathbb{Z}^{d}\right) $ into the tower of height $m$ over $%
\left( Z,\alpha ,\mathbb{Z}^{d}\right) $, defined by $\widehat{\theta }%
_{2}\left( y,u\right) =\left( \theta _{2}\left( y\right) ,u\right) $ for $%
u\in \left\{ 0,1,\ldots ,m-1\right\} ^{d}$. Now 
\begin{equation*}
h_{\widehat{\theta }_{1}}\left[ 1_{X}\right] <nh_{\widehat{\theta }_{2}}%
\left[ 1_{Y}\right] <m^{d}h_{\widehat{\theta }_{2}}\left[ 1_{Y}\right] =h_{%
\widehat{\theta }_{2}}\left[ 1_{Y\left( m\right) }\right]
\end{equation*}
Therefore, by Lemma \ref{small}, there is a bounded orbit injection from $%
\left( X,\varphi ,\mathbb{Z}^{d}\right) $ into $\left( Y\left( m\right) ,%
\widehat{\psi },\mathbb{Z}^{d}\right) $.
\end{proof}

\begin{theorem}
Bounded orbit injection equivalence is an equivalence relation.
\end{theorem}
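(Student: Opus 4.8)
The plan is to establish the only nontrivial property, \textbf{transitivity}, since reflexivity follows from the identity map and symmetry follows by exchanging the two orbit injections $\theta_\varphi,\theta_\psi$ in Definition \ref{BOIE}. So suppose $\left(X,\varphi,\mathbb{Z}^d\right)$ is bounded orbit injection equivalent to $\left(Y,\psi,\mathbb{Z}^d\right)$, and $\left(Y,\psi,\mathbb{Z}^d\right)$ is bounded orbit injection equivalent to $\left(W,\chi,\mathbb{Z}^d\right)$. The goal is to exhibit a single minimal free Cantor system admitting bounded orbit injections from both $X$ and $W$.

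First I would collapse each of the two equivalences onto a tower over the common middle system $Y$. From the first equivalence there is a system $\left(Z_1,\alpha_1,\mathbb{Z}^d\right)$ with bounded orbit injections from $X$ and from $Y$ into $Z_1$; Lemma \ref{tower} then produces an integer $m_1>0$ and a bounded orbit injection from $\left(X,\varphi,\mathbb{Z}^d\right)$ into the tower $\left(Y(m_1),\widehat{\psi},\mathbb{Z}^d\right)$. From the second equivalence there is a system $\left(Z_2,\alpha_2,\mathbb{Z}^d\right)$ with bounded orbit injections from $Y$ and from $W$ into $Z_2$; applying Lemma \ref{tower} with $W$ playing the role of the first system gives an integer $m_2>0$ and a bounded orbit injection from $\left(W,\chi,\mathbb{Z}^d\right)$ into $\left(Y(m_2),\widehat{\psi},\mathbb{Z}^d\right)$. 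Thus both $X$ and $W$ inject into towers over $Y$, but the two heights $m_1,m_2$ may differ, and reconciling them is the crux of the argument.

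The key step is to embed both towers into one common tower over $Y$. Set $m=m_1m_2$. I claim that whenever $n\mid m$ there is a bounded orbit injection $\iota_{n,m}:Y(n)\to Y(m)$, given by $\iota_{n,m}(y,a)=(y,a)$ under the inclusion $\{0,\ldots,n-1\}^d\subset\{0,\ldots,m-1\}^d$. This $\iota_{n,m}$ is a continuous injection whose range $Y\times\{0,\ldots,n-1\}^d$ is clopen in $Y(m)$, hence open. Since in any tower two points $(y,a)$ and $(y',a')$ lie on a common orbit precisely when $y,y'$ lie on a common $\psi$-orbit (the orbit of $(y,a)$ being $(\text{orbit}_\psi\, y)\times\{0,\ldots,n-1\}^d$), the relation ``same orbit'' is preserved in both directions, so the orbit-injection condition of Definition \ref{OrbitInjection} holds. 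Its cocycle sends $\left((y,a),v\right)$ to $v+(m-n)w$, where $w\in\mathbb{Z}^d$ is determined componentwise by $v+a=nw+r$ with $r\in\{0,\ldots,n-1\}^d$; for fixed $v$ this depends only on $a$, which ranges over a finite set, so the cocycle is locally constant and therefore continuous. Hence $\iota_{m_1,m}$ and $\iota_{m_2,m}$ are bounded orbit injections into $Y(m)$.

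Finally I would use that a composition of bounded orbit injections is again a bounded orbit injection: the composite of continuous open injections is one, the orbit condition chains through the middle system, and the composite cocycle is $\eta_{\theta'\theta}(x,w)=\eta_{\theta'}\!\left(\theta(x),\eta_\theta(x,w)\right)$, a composition of continuous maps. Composing $X\to Y(m_1)$ with $\iota_{m_1,m}$, and $W\to Y(m_2)$ with $\iota_{m_2,m}$, yields bounded orbit injections from both $X$ and $W$ into $Y(m)$. As noted in the discussion preceding Lemma \ref{tower}, $\left(Y(m),\widehat{\psi},\mathbb{Z}^d\right)$ is a minimal free Cantor system, so Definition \ref{BOIE} gives that $\left(X,\varphi,\mathbb{Z}^d\right)$ and $\left(W,\chi,\mathbb{Z}^d\right)$ are bounded orbit injection equivalent, establishing transitivity. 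The main obstacle is the height-reconciliation claim of the third paragraph; the remaining steps are routine chaining of the definitions and of Lemma \ref{tower}.
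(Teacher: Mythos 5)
Your proposal is correct, and it follows the same overall strategy as the paper --- reduce both equivalences to towers over the common middle system $Y$ via Lemma \ref{tower} --- but it handles the crux differently, and arguably more carefully. The paper's proof is a single sentence: it invokes Lemma \ref{tower} to get \emph{one} height $m$ serving both $\left( X,\varphi ,\mathbb{Z}^{d}\right) $ and the third system simultaneously. Strictly speaking, the lemma as \emph{stated} only yields ``for some $m>0$'' in each application, so two applications produce heights $m_{1},m_{2}$ that need not agree; the paper's common $m$ is justified only by looking inside the proof of Lemma \ref{tower}, where $m$ may be taken to be any integer with $m^{d}>n$, so that all sufficiently large heights work and a common one exists. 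You instead treat Lemma \ref{tower} as a black box and reconcile the heights explicitly, via the bounded orbit injection $\iota _{n,m}:Y\left( n\right) \rightarrow Y\left( m\right) $ together with closure of bounded orbit injections under composition. Your verification of $\iota _{n,m}$ is sound: the $\widehat{\psi }$-orbit of $\left( y,a\right) $ in any tower is $\left( \mathrm{orbit}_{\psi }\,y\right) \times \left\{ 0,\ldots ,n-1\right\} ^{d}$, so the orbit condition holds in both directions, and your cocycle formula $\eta \left( \left( y,a\right) ,v\right) =v+\left( m-n\right) w$ with $v+a=nw+r$ is locally constant, hence continuous (note that divisibility $n\mid m$ is never actually used --- any $m\geq \max \left( m_{1},m_{2}\right) $ would do). The composite-cocycle identity $\eta _{\theta ^{\prime }\theta }\left( x,w\right) =\eta _{\theta ^{\prime }}\left( \theta \left( x\right) ,\eta _{\theta }\left( x,w\right) \right) $ is also correct. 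In short: the paper's route is shorter but relies on an unstated strengthening of Lemma \ref{tower} (``for all sufficiently large $m$''), while yours is longer but self-contained given the lemma's literal statement; it also records the useful general facts that towers embed in taller towers and that bounded orbit injections compose.
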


\begin{proof}
Reflexivity and symmetry are clear, we prove transitivity. Suppose there are
bounded orbit injections from $\left( X,\varphi ,\mathbb{Z}^{d}\right) $ and 
$\left( Y,\psi ,\mathbb{Z}^{d}\right) $ into a common system and there are
bounded orbit injections from $\left( Y,\psi ,\mathbb{Z}^{d}\right) $ and $%
\left( Z,\alpha ,\mathbb{Z}^{d}\right) $ into a common system. Then by Lemma %
\ref{tower}, there is an $m>0$ such that there exist bounded orbit
injections from both $\left( X,\varphi ,\mathbb{Z}^{d}\right) $ and $\left(
Z,\alpha ,\mathbb{Z}^{d}\right) $ into a tower of height $m$ over $\left(
Y,\psi ,\mathbb{Z}^{d}\right) $.
\end{proof}

\subsection{Orbit equivalences from injections}

Now let us suppose that two minimal Cantor systems $\left( X,\varphi ,%
\mathbb{Z}^{d}\right) $ and $\left( Y,\psi ,\mathbb{Z}^{d}\right) $ are
bounded orbit injection equivalent, with bounded orbit injections $\theta
_{1}$ and $\theta _{2}$ into a common system $\left( Z,\alpha ,\mathbb{Z}%
^{d}\right) $. Let $h_{\theta _{1}}$ and $h_{\theta _{2}}$ be the
isomorphisms as in Theorem \ref{thm:iso}. We show that if $h_{\theta _{1}}%
\left[ 1_{X}\right] =h_{\theta _{2}}\left[ 1_{Y}\right] $ holds then there
is a \emph{bounded orbit equivalence} $\theta $ from $\left( X,\varphi ,%
\mathbb{Z}^{d}\right) $ to $\left( Y,\psi ,\mathbb{Z}^{d}\right) $, i.e., an
orbit equivalence $\theta $ in which the function $\eta :X\times \mathbb{Z}%
^{d}\rightarrow \mathbb{Z}^{d}$ satisfying $\theta \left( \varphi ^{v}\left(
x\right) \right) =\psi ^{\eta \left( x,v\right) }\theta \left( x\right) $ is
continuous.

Let $\left( X,\varphi ,\mathbb{Z}^{d}\right) $ be a minimal Cantor system.
By the \emph{full group of} $\varphi $ we mean the collection of
homeomorphisms $\pi :X\rightarrow X$ such that for each $x$, $\pi \left(
x\right) =\varphi ^{\zeta \left( x\right) }\left( x\right) $ for some $\zeta
\left( x\right) \in \mathbb{Z}^{d}$.\ By the \emph{topological full group of 
}$\varphi $ we mean the collection of full group elements that have the
property that the associated cocycle function $\zeta :X\rightarrow \mathbb{Z}%
^{d}$ is continuous.

\begin{theorem}
Let $\left( X,\varphi ,\mathbb{Z}^{d}\right) $ and $\left( Y,\psi ,\mathbb{Z}%
^{d}\right) $ be two minimal Cantor systems which are bounded orbit
injection equivalent, with bounded orbit injections $\theta _{1}$ and $%
\theta _{2}$ into a common system $\left( Z,\alpha ,\mathbb{Z}^{d}\right) $.
Let $h_{\theta _{1}}:\left( G\left( \varphi \right) ,G\left( \varphi \right)
_{+}\right) \rightarrow \left( G\left( \alpha \right) ,G\left( \alpha
\right) _{+}\right) $ and $h_{\theta _{2}}:\left( G\left( \psi \right)
,G\left( \psi \right) _{+}\right) \rightarrow \left( G\left( \alpha \right)
,G\left( \alpha \right) _{+}\right) $ be the isomorphisms as in Theorem \ref%
{thm:iso}. Suppose $h_{\theta _{1}}\left[ 1_{X}\right] =h_{\theta _{2}}\left[
1_{Y}\right] $ in $G\left( \alpha \right) $. Then there is a bounded orbit
equivalence $\theta $ from $\left( X,\varphi ,\mathbb{Z}^{d}\right) $ to $%
\left( Y,\psi ,\mathbb{Z}^{d}\right) $. Further, $h_{\theta }=h_{\theta
_{2}}^{-1}h_{\theta _{1}}$ is an isomorphism from $\left( G\left( \varphi
\right) ,G\left( \varphi \right) _{+}\right) \ $to $\left( G\left( \psi
\right) ,G\left( \psi \right) _{+}\right) $.
\end{theorem}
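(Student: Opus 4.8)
The plan is to reduce the construction of a bounded orbit equivalence $\theta:X\to Y$ to that of a single homeomorphism $\pi:\theta_1(X)\to\theta_2(Y)$ which moves every point along its $\alpha$-orbit by a continuous cocycle and which restricts, on each $\alpha$-orbit $O$, to a bijection from $\theta_1(X)\cap O$ onto $\theta_2(Y)\cap O$; one then sets $\theta=\theta_2^{-1}\circ\pi\circ\theta_1$. Indeed, such a $\pi$ is in particular a bijection of $\theta_1(X)$ onto $\theta_2(Y)$, so $\theta$ is a homeomorphism of $X$ onto $Y$, and since $\theta_1,\theta_2$ are bounded orbit injections and $\pi$ has a continuous (hence locally constant on the Cantor set $Z$) orbit cocycle, the orbit cocycle $\eta$ of $\theta$ is a composition of continuous cocycles and is therefore continuous, so $\theta$ is a bounded orbit equivalence. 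The hypothesis $h_{\theta_1}[1_X]=h_{\theta_2}[1_Y]$ unwinds, by the definition of $h_{\theta_i}$ in Theorem \ref{thm:iso}, to the statement that $[1_{\theta_1(X)}]=[1_{\theta_2(Y)}]$ in $G(\alpha)$, i.e. that $1_{\theta_1(X)}-1_{\theta_2(Y)}$ is an $\alpha$-coboundary $\sum_i(1_{A_i}-1_{\alpha^{v(i)}A_i})$.

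To build $\pi$ I would match the points of $\theta_1(X)$ with those of $\theta_2(Y)$ tile by tile, along the lines of Theorem \ref{small}. Fix a Voronoi-Rohlin partition $\mathcal{P}=\{\alpha^w C_k:w\in P_k,\ 1\le k\le K\}$ refining both $\{\theta_1(X),Z\setminus\theta_1(X)\}$ and $\{\theta_2(Y),Z\setminus\theta_2(Y)\}$, so that within each tile the indices split into those $w\in P_k$ with $\alpha^w C_k\subset\theta_1(X)$ and those with $\alpha^w C_k\subset\theta_2(Y)$. The coboundary expression above, together with the boundary estimate $|P_k\vartriangle(P_k-v)|\le LM^{d-1}\Vert v\Vert$ from Theorem \ref{voronoi}, shows that for $x\in C_k$ the two counts $\sum_{w\in P_k}1_{\theta_1(X)}(\alpha^w x)$ and $\sum_{w\in P_k}1_{\theta_2(Y)}(\alpha^w x)$ differ only by a surface term of order $M^{d-1}$, negligible against the $\sim M^d$ indices in the tile; over each tile one then matches as many indices as possible by an injection $\rho_k$ (as in Theorem \ref{small}) and translates $x\in\alpha^w C_k\cap\theta_1(X)$ to $\alpha^{\rho_k(w)-w}(x)\in\theta_2(Y)$. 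The main obstacle, and the place where equality (rather than the strict inequality of Theorem \ref{small}) is essential, is to turn this approximate per-tile matching into an exact global bijection with bounded cocycle: a single scale leaves an unmatched residual of size $O(M^{d-1})$ concentrated near the tile boundaries in each of $\theta_1(X)$ and $\theta_2(Y)$. I would therefore iterate, refining to a much larger scale $M'\gg M$, matching the residuals there so as to produce yet smaller residuals, and arrange the choices at successive scales to be consistent so that the partial matchings converge. Equality of the classes is exactly what guarantees that the global surplus of $\theta_1(X)$-points over $\theta_2(Y)$-points vanishes, so that this back-and-forth exhausts both sides and closes up to a homeomorphism $\pi$ whose cocycle, being locally constant at each stage and eventually stationary on each clopen piece, is continuous.

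Finally, the identity $h_\theta=h_{\theta_2}^{-1}h_{\theta_1}$ follows formally. The assignment $\theta\mapsto h_\theta$ is functorial on generators, since $h_{\theta_2}\circ h_\theta[1_A]=h_{\theta_2}[1_{\theta A}]=[1_{\theta_2\theta A}]=h_{\theta_2\circ\theta}[1_A]$ for every clopen $A\subset X$, and $\theta_2\circ\theta=\pi\circ\theta_1$. Since $\pi$ moves points along $\alpha$-orbits by a continuous cocycle, the computation used to establish well-definedness of $h_\theta$ in Theorem \ref{thm:iso} shows $1_{\pi\theta_1 A}-1_{\theta_1 A}\in B(\alpha)$, whence $h_{\pi\theta_1}=h_{\theta_1}$. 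Therefore $h_{\theta_2}\circ h_\theta=h_{\pi\theta_1}=h_{\theta_1}$, and as $h_{\theta_2}$ is an isomorphism we conclude $h_\theta=h_{\theta_2}^{-1}h_{\theta_1}$, an isomorphism from $(G(\varphi),G(\varphi)_+)$ to $(G(\psi),G(\psi)_+)$ by Theorem \ref{thm:iso}.
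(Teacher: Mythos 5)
Your overall skeleton is the same as the paper's: reduce to a map $\pi$ in the (topological) full group of $\alpha$ matching $\theta_{1}\left( X\right) $ with $\theta_{2}\left( Y\right) $ tile by tile in a Voronoi--Rohlin partition, set $\theta =\theta_{2}^{-1}\pi \theta_{1}$, and deduce $h_{\theta }=h_{\theta_{2}}^{-1}h_{\theta_{1}}$ by the functoriality computation (that last paragraph of yours is fine). The gap is in the crucial middle step: your multi-scale iteration cannot produce a \emph{bounded} orbit equivalence, and you give no argument that it does. A continuous cocycle $\eta \left( \cdot ,w\right) :X\rightarrow \mathbb{Z}^{d}$ on a compact space is locally constant with finite range, so the displacements of $\pi $ along $\alpha $-orbits must be bounded. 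In your scheme, points left unmatched at scale $M_{n}$ are matched at scale $M_{n+1}$ with displacements of order $M_{n+1}$, so continuity of the cocycle forces the residual to be \emph{exactly empty} after finitely many stages. But at every scale the per-tile discrepancy is generically a nonzero boundary term of order $M^{d-1}$; refining to a larger scale only shrinks its density and never kills it, so the iteration runs forever and the limiting cocycle is unbounded, hence discontinuous at accumulation points of the residuals. What you have written is essentially the back-and-forth of Lemma \ref{step} and Theorem \ref{fginf}, which the paper invokes under the strictly weaker hypothesis that $h_{\theta_{1}}\left[ 1_{X}\right] -h_{\theta_{2}}\left[ 1_{Y}\right] $ is infinitesimal, and which delivers only an orbit equivalence whose cocycle is continuous off finitely many points --- strictly less than the bounded orbit equivalence claimed here.

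What the equality hypothesis actually buys --- and what the paper exploits --- is an \emph{exact} finite coboundary identity $1_{\theta_{1}\left( X\right) }-1_{\theta_{2}\left( Y\right) }=\sum_{i=1}^{I}\left( 1_{A_{i}}-1_{\alpha^{v\left( i\right) }A_{i}}\right) $, used at a \emph{single} scale. Summing over a tile $P_{k}$, each term $1_{A_{i}}-1_{\alpha^{v\left( i\right) }A_{i}}$ telescopes exactly, so the per-tile discrepancy is not merely $O\left( M^{d-1}\right) $ in size: it is identified concretely as the count of the clopen pieces $B_{i}\subset A_{i}$ that translation by $v\left( i\right) $ pushes across the tile boundary, minus the count of their images. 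The paper then uses the two lower bounds $cM^{d}$ from Lemma \ref{lem:ineq} (applied to $\left[ 1_{\theta_{2}\left( Y\right) }\right] >0$ and $\left[ 1_{Z}\right] >\left[ 1_{\theta_{2}\left( Y\right) }\right] $), together with the choice $cM>L\sum_{i=1}^{I}\left\Vert v\left( i\right) \right\Vert $ from Theorem \ref{voronoi}, to relocate these finitely many offending pieces injectively \emph{within each tile} by a topological full group element $\pi $; after replacing $\theta_{2}$ by $\pi \theta_{2}$ the per-tile counts agree exactly, and a one-scale matching $\zeta $ on each tile finishes the construction with a locally constant, hence continuous and bounded, cocycle. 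To repair your proof, replace the infinite iteration by this single-scale correction; as it stands, your claim that the matchings are ``eventually stationary on each clopen piece'' is unsubstantiated and, in general, false.
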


\begin{proof}
First consider the case where $\theta _{1}\left( X\right) =Z$. Then $\theta
_{2}\left( Y\right) =Z$ as well, for otherwise $Z\setminus \theta _{2}\left(
Y\right) $ is a nonempty clopen set and $h_{\theta _{1}}\left[ 1_{X}\right] =%
\left[ 1_{Z}\right] =h_{\theta _{2}}\left[ 1_{Y}\right] +\left[
1_{Z\setminus \theta _{2}\left( Y\right) }\right] >h_{\theta _{2}}\left[
1_{Y}\right] $. But if $\theta _{1}\left( X\right) =Z=\theta _{2}\left(
Y\right) $ then we can simply let $\theta =\theta _{2}^{-1}\theta _{1}$, and
we are done.

Assume then that $\left[ 1_{Z}\right] >\left[ 1_{\theta _{1}\left( X\right) }%
\right] =\left[ 1_{\theta _{2}\left( X\right) }\right] $. Then 
\begin{equation*}
1_{\theta _{1}\left( X\right) }-1_{\theta _{2}\left( Y\right)
}=\sum_{i=1}^{I}\left( 1_{A_{i}}-1_{\alpha ^{v\left( i\right) }A_{i}}\right)
\end{equation*}%
where $A_{i}\subset Z$ are clopen and $v\left( i\right) \in \mathbb{Z}^{d}$.
Via Lemma \ref{lem:ineq}, there is a $c>0$ and $M_{0}>2$ such that if $C$ is 
$M$-regular and $\mathcal{P}=\left\{ \varphi ^{w}C_{k}:w\in P_{k},1\leq
k\leq K\right\} $ is a Voronoi-Rohlin partition centered at $C$ then for $%
1\leq k\leq K$ and $x\in C_{k}$, both the following hold 
\begin{eqnarray*}
\sum_{v\in P_{k}}1_{\theta _{2}\left( Y\right) }\alpha ^{v}\left( x\right)
&>&cM^{d} \\
\sum_{v\in P_{k}}1_{Z}\alpha ^{v}\left( x\right) -\sum_{v\in P_{k}}1_{\theta
_{2}\left( Y\right) }\alpha ^{v}\left( x\right) &>&cM^{d}
\end{eqnarray*}

Choose $M$ such that $cM>L\sum_{i=1}^{I}\left\Vert v\left( i\right)
\right\Vert $ where $L$ is the constant from Theorem \ref{voronoi} (item 4).

Let $\mathcal{P}=\left\{ \alpha ^{w}C_{k}:w\in P_{k},1\leq k\leq K\right\} $
be a Voronoi-Rohlin partition of $Z$ centered at an $M$-regular clopen set $%
C $. If necessary, partition the clopen sets $C_{k}$ so that $\mathcal{P}$
refines $\left\{ A_{i},Z\setminus A_{i}\right\} $ and $\left\{ \alpha
^{v\left( i\right) }A_{i},Z\setminus \alpha ^{v\left( i\right)
}A_{i}\right\} $ for each $i$.

Then for each $i$, 
\begin{equation*}
A_{i}=\cup _{k=1}^{K}\cup _{j=1}^{J\left( k\right) }\alpha ^{w\left(
i,j,k\right) }C_{k}
\end{equation*}%
where $w\left( i,j,k\right) \in P_{k}$ for all $\left( i,j,k\right) $. Let $%
B_{i}\subset A_{i}$ be the union of sets $\alpha ^{w\left( i,j,k\right)
}C_{k}$ over the indices $\left( i,j,k\right) $ where $w\left( i,j,k\right)
+v\left( i\right) \not\in P_{k}$. The number of such indices $\left(
i,j,k\right) $ is bounded above by 
\begin{eqnarray*}
\sum_{i=1}^{I}\left\vert P_{k}\vartriangle \left( P_{k}-v\left( i\right)
\right) \right\vert &<&\sum_{i=1}^{I}L\left\Vert v\left( i\right)
\right\Vert M^{d-1} \\
&<&cM^{d}
\end{eqnarray*}

For each $x\in B_{i}$, $x\in \alpha ^{w\left( x\right) }C_{k\left( x\right)
} $ for some $1\leq l\left( x\right) \leq K$ and $w\left( x\right) \in
P_{k\left( x\right) }$ and $\alpha ^{v\left( i\right) }\left( x\right) $ is
in $\alpha ^{u\left( x\right) }C_{l\left( x\right) }$ for some $1\leq
l\left( x\right) \leq K$ for some $u\left( x\right) \in P_{l\left( x\right)
} $. The map $x\mapsto \left( k\left( x\right) ,w\left( x\right) ,l\left(
x\right) ,u\left( x\right) \right) $ is continuous.

For each pair of indices $k$ and $w$ with $\alpha ^{w}C_{k}\subset B_{i}$,
we select a vector $r\left( i,w,k\right) \in P_{k}$ so that $\alpha
^{r\left( i,w,k\right) }C_{k}\subset Z\setminus \theta _{2}\left( Y\right) $%
. We do so in such a way that if $\left( i,w\right) \neq \left( i^{\prime
},w^{\prime }\right) $ then $r\left( i,w,k\right) \neq r\left( i^{\prime
},w^{\prime },k\right) $. This is possible because 
\begin{eqnarray*}
\#\left\{ \left( i,k,w\right) :\alpha ^{w}C_{k}\subset B_{i}\text{ for some }%
i\right\} &=&\sum_{i=1}^{I}\left\vert P_{k}\vartriangle \left( P_{k}-v\left(
i\right) \right) \right\vert \\
&<&cM^{d} \\
&<&\sum_{v\in P_{k}}1_{Z\setminus \theta _{2}\left( Y\right) }\alpha
^{v}\left( x\right)
\end{eqnarray*}

By the same reasoning, for each index $l$ and $u$ with $\alpha
^{u}C_{l}\subset \alpha ^{v\left( i\right) }B_{i}$, we select a vector $%
s\left( i,u,l\right) \in P_{l}$ so that $\alpha ^{s\left( i,u,l\right)
}C_{l}\subset \theta _{2}\left( Y\right) $, and that if $\left( i,u\right)
\neq \left( i^{\prime },u^{\prime }\right) $ then $s\left( i,u,l\right) \neq
s\left( i^{\prime },u^{\prime },l\right) $.

Define $\pi :\theta _{2}\left( Y\right) \rightarrow Z$, an element of the
topological full group of $\alpha $ as follows. Suppose $y=\alpha ^{v\left(
i\right) }\left( x\right) $ for some $x\in B_{i}$ where $B_{i}\subset \alpha
^{w}C_{k}\in \mathcal{P}$, and $y\in \alpha ^{u}C_{l}\in \mathcal{P}$.
Define $\pi \left( \alpha ^{s\left( i,u,l\right) -u}y\right) =\alpha
^{r\left( i,w,k\right) -w}\left( x\right) $. We set $\pi \equiv id$ on the
complement of $\cup _{i,u,l}\alpha ^{s\left( i,u,l\right) -u}C_{l}$.

Now replace the bounded orbit injection $\theta _{2}:Y\rightarrow Z$ with
the bounded orbit injection $\pi \theta _{2}:Y\rightarrow Z$.

Fix $k$, and let us consider $z\in C_{k}$ and the sum 
\begin{eqnarray*}
\sum_{v\in P_{k}}\left( 1_{\theta _{1}\left( X\right) }\left( \alpha
^{v}z\right) -1_{\pi \theta _{2}\left( Y\right) }\left( \alpha ^{v}z\right)
\right) &=&\sum_{v\in P_{k}}\left( 1_{\theta _{1}\left( X\right) }\left(
\alpha ^{v}z\right) -1_{\theta _{2}\left( Y\right) }\left( \alpha
^{v}z\right) \right) \\
&&+\sum_{v\in P_{k}}\left( 1_{\theta _{2}\left( Y\right) }\left( \alpha
^{v}z\right) -1_{\pi \theta _{2}\left( Y\right) }\left( \alpha ^{v}z\right)
\right)
\end{eqnarray*}%
We rewrite the first and second terms in the sum as follows.%
\begin{eqnarray*}
\sum_{v\in P_{k}}\left( 1_{\theta _{1}\left( X\right) }\left( \alpha
^{v}z\right) -1_{\theta _{2}\left( Y\right) }\left( \alpha ^{v}z\right)
\right) &=&\sum_{v\in P_{k}}\sum_{i}\left( 1_{B_{i}}\left( \alpha
^{v}z\right) -1_{\alpha ^{v\left( i\right) }B_{i}}\left( \alpha ^{v}z\right)
\right) \\
&=&\sum_{i}\#\left\{ v\in P_{k}:\alpha ^{v}C_{k}\subset B_{i}\right\} \\
&&-\sum_{i}\#\left\{ v\in P_{k}:\alpha ^{v-v\left( i\right) }C_{k}\subset
B_{i}\right\}
\end{eqnarray*}%
\begin{eqnarray*}
&&\sum_{v\in P_{k}}\left( 1_{\theta _{2}\left( Y\right) }\left( \alpha
^{v}z\right) -1_{\pi \theta _{2}\left( Y\right) }\left( \alpha ^{v}z\right)
\right) = \\
&&\sum_{i}\#\left\{ v\in P_{k}:v=s\left( i,u,k\right) \text{ for some }u\in
P_{k}\right\} \\
&&-\sum_{i}\#\left\{ v\in P_{k}:r\left( i,w,k\right) =v\text{ for some }w\in
P_{k}\right\}
\end{eqnarray*}

Fixing $i$, these sums cancel. This means that for each $z\in C_{k}$, there
is a one-to-one correspondence $\zeta :\left\{ v\in P_{k}:\alpha ^{v}\left(
z\right) \subset \theta _{1}\left( X\right) \right\} \rightarrow \left\{
v\in P_{k}:\alpha ^{v}\left( z\right) \subset \pi \theta _{2}\left( Y\right)
\right\} $.

With this, we can set up a bounded orbit equivalence $h$ from $X$ to $Y$ by
taking $h\left( x\right) =\theta _{2}^{-1}\pi ^{-1}\alpha ^{\zeta \left(
v\right) -v}\theta _{1}\left( x\right) $ where $\theta _{1}\left( x\right)
\in \alpha ^{v}C_{k}$.
\end{proof}

Finally we examine the situation where two minimal Cantor systems $\left(
X,\varphi ,\mathbb{Z}^{d}\right) $ and $\left( Y,\psi ,\mathbb{Z}^{d}\right) 
$ are bounded orbit injection equivalent, with bounded orbit injections $%
\theta _{1}$ and $\theta _{2}$ into a common system $\left( Z,\alpha ,%
\mathbb{Z}^{d}\right) $ with the property that $h_{\theta _{1}}\left[ 1_{X}%
\right] -h_{\theta _{2}}\left[ 1_{Y}\right] $ is an infinitesimal, i.e., an
element of the subgroup $Inf\left( G\left( \alpha \right) \right) $ as
defined below.

\begin{definition}
For a simple ordered group $\left( G,G_{+}\right) $ we define $Inf\left(
G\right) $ to be the following subgroup 
\begin{equation*}
Inf\left( G\right) =\left\{ g\in G:ng<h\text{ for any }n\in \mathbb{Z}\text{%
, and any }h\in G_{+}\setminus \left\{ 0\right\} \right\}
\end{equation*}
\end{definition}

The type of bounded orbit injection equivalence discussed here corresponds
to a kind of topological version of the notion of \emph{even }Kakutani
equivalence in measure theoretic dynamics (see for example, \cite{Rudolph}).
It follows from the main results of \cite{Putnam09} that $\left( X,\varphi ,%
\mathbb{Z}^{d}\right) $ and $\left( Y,\psi ,\mathbb{Z}^{d}\right) $ are
orbit equivalent. We obtain a stronger form of orbit equivalence in this
setting, not bounded, but where for any $v\in \mathbb{Z}^{d}$ the cocycle $%
\eta \left( \cdot ,v\right) $ is continuous except at two points $\left\{
x_{0},\varphi ^{-v}x_{0}\right\} $.

We first show over the next three propositions that if indicator functions
of clopen sets differ by a infinitesimal then there is a full group element
mapping one to the other. The proof is essentially an adaptation of an
argument in \cite{Glasner} to the case of $\mathbb{Z}^{d}$-actions.

\begin{proposition}
Let $\left( X,\varphi ,\mathbb{Z}^{d}\right) $ be a minimal Cantor system.
Let $A$, $B$ be two clopen sets in $X$ such that $\left[ 1_{A}\right] -\left[
1_{B}\right] \in Inf\left( \varphi \right) $. Then given a proper clopen
subset $D\subset A$ and point $b\in B$, there is a clopen subset $E\subset
B\setminus \left\{ b\right\} $ such that $\left[ 1_{D}\right] <\left[ 1_{E}%
\right] $.
\end{proposition}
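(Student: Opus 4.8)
The plan is to produce $E$ in the form $E=B\setminus W$, where $W$ is a sufficiently small clopen neighborhood of $b$ contained in $B$; since $b\in W$ this automatically gives $E\subset B\setminus\{b\}$, so the whole problem reduces to controlling the class $[1_{W}]$. Writing $g=[1_{A}]-[1_{B}]\in Inf(\varphi)$ and $V=A\setminus D$ (a nonempty clopen set, since $D\subsetneq A$), a direct computation in $G(\varphi)$ using $W\subset B$, $[1_{B}]=[1_{A}]-g$ and $[1_{A}]=[1_{D}]+[1_{V}]$ gives
\begin{equation*}
[1_{E}]-[1_{D}]=[1_{B}]-[1_{W}]-[1_{D}]=[1_{V}]-[1_{W}]-g.
\end{equation*}
Thus it suffices to arrange $[1_{V}]-[1_{W}]\in G(\varphi)_{+}\setminus\{0\}$: once $[1_{V}]-[1_{W}]>0$, the defining property of an infinitesimal (taking $n=1$ in the definition of $Inf$) yields $g<[1_{V}]-[1_{W}]$, i.e. $[1_{E}]-[1_{D}]>0$, which is exactly $[1_{D}]<[1_{E}]$.

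To make $[1_{V}]-[1_{W}]$ strictly positive I will first split $V=V_{1}\sqcup V_{2}$ into two nonempty clopen pieces (possible because a nonempty clopen subset of a Cantor set has no isolated points, hence is infinite), and then find $W$ with $[1_{W}]\le[1_{V_{1}}]$; this gives $[1_{V}]-[1_{W}]\ge[1_{V_{2}}]>0$. The heart of the argument is constructing such a $W$, and here I will use the Voronoi--Rohlin machinery together with Lemma \ref{lem:ineq2}. Fix the syndeticity constant $r_{1}$ of $V_{1}$ from Proposition \ref{synd}, choose a scale $M$ larger than the $M_{0}$ of Theorem \ref{voronoi}, larger than $2r_{1}$, and larger than $2$, and take a Voronoi--Rohlin partition $\mathcal{P}=\{\varphi^{w}C_{k}:w\in P_{k},\,1\le k\le K\}$ at scale $M$. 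Using the zero-dimensionality of $X$ together with Proposition \ref{sep} (applied with separation constant $4M$), I then choose a clopen neighborhood $W$ of $b$ with $W\subset B$ and $\mathrm{diam}(W)$ small enough that $W$ is $4M$-separated.

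The key estimate, carried out tile by tile, is that for every $x\in C_{k}$ the orbit segment indexed by $P_{k}$ meets $V_{1}$ at least once and meets $W$ at most once. The lower bound uses property (2) of Theorem \ref{voronoi} (each $P_{k}$ contains all lattice vectors of norm $\le M/2$) together with $r_{1}$-syndeticity of $V_{1}$ and $r_{1}\le M/2$, which forces a $V_{1}$-return vector of norm $<r_{1}$ to lie in $P_{k}$. The upper bound uses property (3) of Theorem \ref{voronoi} (each $P_{k}$ lies in the ball of radius $2M$, so any two of its indices differ by less than $4M$) together with the $4M$-separation of $W$, which rules out two distinct $W$-return vectors in the same $P_{k}$. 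Hence
\begin{equation*}
\sum_{v\in P_{k}}1_{V_{1}}\varphi^{v}(x)\ \ge\ 1\ \ge\ \sum_{v\in P_{k}}1_{W}\varphi^{v}(x)
\end{equation*}
for all $k$ and all $x\in C_{k}$, and Lemma \ref{lem:ineq2} yields $[1_{V_{1}}]\ge[1_{W}]$, completing the construction and hence the proof.

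I expect the visit-count domination in the last step to be the main obstacle. The two geometric bounds come from the \emph{opposite} inclusions in Theorem \ref{voronoi}(2) and (3), so one must fix the tile scale $M$ \emph{first} (after $V_{1}$, through its syndeticity constant) and only afterwards choose $W$ separated at the larger scale $4M$; reversing this order would make the inequalities point the wrong way. Once the counting is set up correctly, the remaining steps---the split of $V$, the identity for $[1_{E}]-[1_{D}]$, and the single invocation of the infinitesimal property---are routine bookkeeping in the ordered group $(G(\varphi),G(\varphi)_{+})$.
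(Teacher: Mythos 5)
Your proof is correct, and it takes a genuinely different route from the paper's. The paper first upgrades the hypothesis to the group inequality $[1_{B}]>[1_{D}]$, and that step is where the ordered-group structure theory enters: simplicity produces an $n$ with $n\left( [1_{B}]+\varepsilon -[1_{D}]\right) >[1]$, the infinitesimal property kills $n\varepsilon $, and weak unperforation removes the factor $n$. It then feeds this strict inequality into the quantitative Lemma \ref{lem:ineq} to get a per-tile margin exceeding $2$, builds a Voronoi--Rohlin partition centered at an $M$-regular set inside $B$ containing $b$ (Proposition \ref{reg}), refines it by $\{D,X\setminus D\}$ and $\{B,X\setminus B\}$, and takes $E$ to be $B$ minus the single partition element containing $b$, which costs at most one visit per tile. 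You never prove $[1_{B}]>[1_{D}]$: your identity $[1_{E}]-[1_{D}]=[1_{V}]-[1_{W}]-g$ reduces the problem to the domination $[1_{W}]\leq [1_{V_{1}}]$, which you obtain by a two-sided counting argument (syndeticity of $V_{1}$ at scale $r_{1}<M/2$ forces at least one visit per tile via Theorem \ref{voronoi}(2), while $4M$-separation of $W$ allows at most one via Theorem \ref{voronoi}(3)), and you invoke the infinitesimal hypothesis exactly once, with $n=1$. Your argument is thus more elementary on the ordered-group side---it needs only the soft Lemma \ref{lem:ineq2} together with Propositions \ref{sep} and \ref{synd}, and dispenses with Lemma \ref{lem:ineq}, simplicity, and weak unperforation---at the price of the more delicate geometric bookkeeping you flag yourself: the scale $M$ must be fixed from the syndeticity constant of $V_{1}$ before $W$ is chosen $4M$-separated, or the two counting bounds point the wrong way. (A side effect worth noting: your construction implicitly shows $B$ itself can never be $4M$-separated under the hypotheses, so the chosen $W$ is automatically a proper subset of $B$; no separate check is needed.) Both proofs rest on the same standard fact that a nonempty clopen set has strictly positive class (your $[1_{V_{2}}]>0$, the paper's $[1_{A\setminus D}]>0$), which follows from the strict part of Lemma \ref{lem:ineq2}, so that is not an additional gap.
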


\begin{proof}
Let $\varepsilon =\left[ 1_{A}\right] -\left[ 1_{B}\right] \in Inf\left(
\varphi \right) $. Since $D\subset A$ is a proper subset, $A\setminus D$ is
a nonempty clopen set and $\left[ 1_{A\setminus D}\right] >0$. Now $\left[
1_{B}\right] +\varepsilon -\left[ 1_{D}\right] =\left[ 1_{A}\right] -\left[
1_{D}\right] =\left[ 1_{A\setminus D}\right] >0$. Because $\left( G\left(
\varphi \right) ,G\left( \varphi \right) _{+}\right) $ is simple, there
exists an $n\in \mathbb{N}$ such that $n\left( \left[ 1_{B}\right]
+\varepsilon -\left[ 1_{D}\right] \right) -\left[ 1\right] >0$. Because $%
\varepsilon $ is infinitesimal, $\left[ 1\right] -n\varepsilon >0$, adding
this to $n\left( \left[ 1_{B}\right] +\varepsilon -\left[ 1_{D}\right]
\right) -\left[ 1\right] $, we obtain $n\left( \left[ 1_{B}\right] -\left[
1_{D}\right] \right) >0$. Because the ordered group is weakly unperforated, $%
\left[ 1_{B}\right] -\left[ 1_{D}\right] >0$.

Now use Lemma \ref{lem:ineq} to find a $M_{0}$ such that whenever $C$ is $M$%
-regular for $M>M_{0}$ and $\mathcal{P}=\left\{ \varphi ^{w}C_{k}:w\in
P_{k},1\leq k\leq K\right\} $ is a Voronoi-Rohlin partition of $X$ centered
at $C$ then for all $x\in C_{k}$, 
\begin{equation*}
\sum_{v\in P_{k}}1_{D}\varphi ^{v}\left( x\right) -\sum_{v\in
P_{k}}1_{B}\varphi ^{v}\left( x\right) >2
\end{equation*}

Apply Proposition \ref{reg} to construct an $M$-regular clopen set $C\subset
B$ containing $b$ with $M>M_{0}$ and the corresponding Voronoi-Rohlin
partition 
\begin{equation*}
\mathcal{P}=\left\{ \varphi ^{w}C_{k}:w\in P_{k},1\leq k\leq K\right\} \text{%
.}
\end{equation*}%
By partitioning $C_{k}$ if necessary, we may assume that $\mathcal{P}$ is a
finer partition than both $\left\{ D,X\setminus D\right\} $ and $\left\{
B,X\setminus B\right\} $. One of these partition elements $B$ is a union of
the form $\cup _{k=1}^{K}\cup _{j=1}^{J\left( k\right) }\varphi ^{w\left(
j,k\right) }C_{k}$ and one of these sets, say $\varphi ^{w\left( 1,1\right)
}C_{1}$, contains the point $b$. Let $E$ be the clopen set $B\setminus
\varphi ^{w\left( 1,1\right) }C_{1}$. Then $E\subset B$, $b\not\in E$, and%
\begin{equation*}
\sum_{v\in P_{k}}1_{D}\varphi ^{v}\left( x\right) <\sum_{v\in
P_{k}}1_{E}\varphi ^{v}\left( x\right) \text{.}
\end{equation*}%
Therefore, $\left[ 1_{E}\right] -\left[ 1_{D}\right] \in G\left( \varphi
\right) _{+}\setminus \left[ 0\right] $ which gives the result.
\end{proof}

\begin{lemma}
\label{step}Let $\left( X,\varphi ,\mathbb{Z}^{d}\right) $ be a minimal
Cantor system. Let $A$, $B$ be two clopen sets in $X$ with $\left[ 1_{A}%
\right] -\left[ 1_{B}\right] \in Inf\left( \varphi \right) $. Fix $x_{0}\in
A $, $y_{0}\in B$ and let $\epsilon >0$ be given. Then there is an element $%
\pi $ of the topological full group of $\varphi $ and a clopen set $%
A^{\prime }\subset A\setminus \left\{ x_{0}\right\} $ such that

\begin{enumerate}
\item $A^{\prime }\supset A\setminus B\left( x_{0},\epsilon \right) $

\item $\pi \left( A^{\prime }\right) \subset B\setminus \left\{
y_{0}\right\} $

\item $\pi ^{2}=id$

\item $\pi |_{X\setminus A^{\prime }}=id$
\end{enumerate}
\end{lemma}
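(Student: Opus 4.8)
The plan is to realize $\pi$ as a finite product of disjoint level-swaps inside a sufficiently fine Voronoi--Rohlin partition, after using the preceding Proposition to upgrade the infinitesimal hypothesis to a strict inequality. First I would fix the clopen set $A'$. Since a minimal Cantor system has no isolated points, there is a clopen set $N$ with $x_{0}\in N\subset A\cap B\left( x_{0},\epsilon \right) $ and $N\neq A$; set $A'=A\setminus N$. Then $A'$ is a proper clopen subset of $A$ with $x_{0}\notin A'$ and $A\setminus B\left( x_{0},\epsilon \right) \subset A'$, which already secures $(1)$ and the requirement $A'\subset A\setminus \left\{ x_{0}\right\} $. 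Applying the preceding Proposition with $D=A'$ and $b=y_{0}$ then produces a clopen set $E\subset B\setminus \left\{ y_{0}\right\} $ with $\left[ 1_{A'}\right] <\left[ 1_{E}\right] $. The reason for passing from $A$ to the strictly smaller $A'$ is exactly that $\left[ 1_{A}\right] -\left[ 1_{B}\right] \in Inf\left( \varphi \right) $ gives no usable inequality on its own, whereas removing the neighbourhood $N$ of $x_{0}$ converts it into the strict inequality above.

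Next I would feed $f=1_{A'}$ and $g=1_{E}$ into Lemma \ref{lem:ineq} (using Proposition \ref{reg} to produce the $M$-regular clopen set) to obtain a Voronoi--Rohlin partition $\mathcal{P}=\left\{ \varphi ^{w}C_{k}:w\in P_{k},1\leq k\leq K\right\} $, which I further refine so that it also refines $\left\{ A',X\setminus A'\right\} $ and $\left\{ E,X\setminus E\right\} $, and so that on each column
\[
\sum_{v\in P_{k}}1_{E}\varphi ^{v}\left( x\right) >\sum_{v\in P_{k}}1_{A'}\varphi ^{v}\left( x\right) \qquad \left( x\in C_{k}\right) .
\]
Writing $S_{k}=\left\{ v\in P_{k}:\varphi ^{v}C_{k}\subset A'\right\} $ and $T_{k}=\left\{ v\in P_{k}:\varphi ^{v}C_{k}\subset E\right\} $, this says precisely that $\left\vert T_{k}\right\vert >\left\vert S_{k}\right\vert $ for every $k$; moreover, since $\mathcal{P}$ is a partition refining $\left\{ A',X\setminus A'\right\} $, all its levels are pairwise disjoint and $A'=\bigcup_{k}\bigcup_{v\in S_{k}}\varphi ^{v}C_{k}$.

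I would then build $\pi$ column by column. For each $k$ I want an involution $\sigma _{k}$ of the finite set $P_{k}$ whose nontrivial part is a disjoint union of transpositions covering every $v\in S_{k}$ and pairing each such $v$ with a height $\sigma _{k}\left( v\right) \in T_{k}$. Granting this, define $\pi$ on $z\in \varphi ^{v}C_{k}$ by $\pi \left( z\right) =\varphi ^{\sigma _{k}\left( v\right) -v}\left( z\right) $ (so $\pi =\mathrm{id}$ on every level with $\sigma _{k}\left( v\right) =v$). Because the levels are pairwise disjoint and $\sigma _{k}$ is an involution, $\pi$ is a well-defined homeomorphism with $\pi ^{2}=\mathrm{id}$, giving $(3)$; its cocycle $z\mapsto \sigma _{k}\left( v\right) -v$ is locally constant, so $\pi$ lies in the topological full group of $\varphi$. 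By construction $\pi$ is the identity off $A'\cup \pi \left( A'\right) $, establishing $(4)$, and $\pi \left( A'\right) =\bigcup_{k}\bigcup_{v\in S_{k}}\varphi ^{\sigma _{k}\left( v\right) }C_{k}\subset E\subset B\setminus \left\{ y_{0}\right\} $, giving $(2)$.

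The only real obstacle is the existence of the matching $\sigma _{k}$ when $S_{k}$ and $T_{k}$ overlap, which is unavoidable since $B$ may lie inside $A$. I would handle it by splitting $P_{k}$ into the pure sources $S_{k}\setminus T_{k}$, the shared heights $S_{k}\cap T_{k}$, and the pure targets $T_{k}\setminus S_{k}$; the inequality $\left\vert T_{k}\right\vert >\left\vert S_{k}\right\vert $ is equivalent to $\left\vert T_{k}\setminus S_{k}\right\vert >\left\vert S_{k}\setminus T_{k}\right\vert $. Each pure source must be paired with a distinct pure target, consuming $\left\vert S_{k}\setminus T_{k}\right\vert $ of them and leaving at least one pure target free; the shared heights are then paired off among themselves, and when their number is odd the single leftover is absorbed by a remaining pure target. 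This is exactly where strictness is indispensable: it guarantees the spare target needed to fix the parity of $S_{k}\cap T_{k}$. Each resulting pair is a legitimate transposition (the partner of every source lies in $T_{k}$), the pairs are disjoint, and $\sigma _{k}$ depends only on $k$, so the cocycle of $\pi$ is continuous as required.
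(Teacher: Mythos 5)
Your proof is correct, and its skeleton is the same as the paper's: use the preceding proposition (with $D=A^{\prime }$, $b=y_{0}$) to convert the infinitesimal hypothesis into the strict inequality $\left[ 1_{A^{\prime }}\right] <\left[ 1_{E}\right] $, then invoke Lemma \ref{lem:ineq} and Proposition \ref{reg} to obtain a Voronoi-Rohlin partition refining $\left\{ A^{\prime },X\setminus A^{\prime }\right\} $ and $\left\{ E,X\setminus E\right\} $ with strict column-count inequalities, and finally define $\pi $ by column-wise level swaps. The one genuine divergence is the treatment of the overlap of $A$ and $B$: the paper removes it at the outset ("without loss of generality $A$ and $B$ are disjoint, otherwise set $\pi =id$ on $A\cap B$"), after which $S_{k}$ and $T_{k}$ are automatically disjoint and any injection $\pi _{k}:S_{k}\rightarrow T_{k}$ extends to the required involution by swapping; you instead keep $A$ and $B$ in general position and build the involution $\sigma _{k}$ directly, pairing pure sources with pure targets and absorbing the parity defect of $S_{k}\cap T_{k}$ using the spare target that strictness provides. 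Your matching argument is valid (and could even be simplified: letting $\sigma _{k}$ fix each $v\in S_{k}\cap T_{k}$ is enough, since the identity on such a level already lands in $E$). What each approach buys: the paper's reduction is shorter, but as a standalone statement its WLOG is slightly delicate (e.g.\ when $x_{0}$ or $y_{0}$ lies in $A\cap B$), whereas your version proves the lemma uniformly without that reduction. One caveat you share with the paper: your $\pi $ is the identity off $A^{\prime }\cup \pi \left( A^{\prime }\right) $, not off $A^{\prime }$; conditions (2)--(4) as literally stated would force $\pi \left( A^{\prime }\right) =A^{\prime }\subset B$, which is impossible when $A$ and $B$ are disjoint, so the weakened reading of (4) — which is also all that the iteration in Theorem \ref{fginf} uses, since there $A$ and $B$ are disjoint — is the correct one.
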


\begin{proof}
Without loss of generality, $A$ and $B$ are disjoint, otherwise set $\pi =id$
on $A\cap B$. Let $A^{\prime }$ be any clopen set such that $A\setminus
B\left( x_{0},\epsilon \right) \subset A^{\prime }\subset A\setminus \left\{
x_{0}\right\} $. Then by the previous proposition there is a clopen set $%
B^{\prime }\subset B\setminus \left\{ y_{0}\right\} $ such that $\left[
1_{B^{\prime }}\right] >\left[ 1_{A^{\prime }}\right] $. Now use Lemma \ref%
{lem:ineq} to find a $M_{0}$ such that whenever $C$ is $M$-regular for $%
M>M_{0}$ and $\mathcal{P}=\left\{ \varphi ^{w}C_{k}:w\in P_{k},1\leq k\leq
K\right\} $ is a Voronoi-Rohlin partition of $X$ centered at $C$ then for
all $x\in C$, 
\begin{equation*}
\sum_{v\in P_{k}}1_{A^{\prime }}\varphi ^{v}\left( x\right) <\sum_{v\in
P_{k}}1_{B^{\prime }}\varphi ^{v}\left( x\right)
\end{equation*}

Next apply Proposition \ref{reg} to construct an $M$-regular clopen set $%
C\subset A$ containing $x_{0}$ with $M>M_{0}$, and the corresponding
Voronoi-Rohlin partition $\mathcal{P}=\left\{ \varphi ^{w}C_{k}:w\in
P_{k},1\leq k\leq K\right\} $. By partitioning $C_{k}$ if necessary, we may
assume that $\mathcal{P}$ is a finer partition than both $\left\{ A^{\prime
},X\setminus A^{\prime }\right\} $ and $\left\{ B^{\prime },X\setminus
B^{\prime }\right\} $. Then for each $k$ we can define an injection $\pi
_{k}:\left\{ v\in P_{k}:\varphi ^{v}C_{k}\subset A^{\prime }\right\}
\rightarrow \left\{ v\in P_{k}:\varphi ^{v}C_{k}\subset B^{\prime }\right\} $%
.\ Then for $x\in A^{\prime }$ we know $x\in $ $\varphi ^{v}C_{k}$ for some $%
k$ and some $v\in P_{k}$; define $\pi \left( x\right) =\varphi ^{\pi
_{k}\left( v\right) -v}\left( x\right) $. Then we see that $\pi \left(
x\right) \in $ $\varphi ^{\pi _{k}\left( v\right) -v}\varphi
^{v}C_{k}=\varphi ^{\pi _{k}\left( v\right) }C_{k}\subset B^{\prime }$.

To extend $\pi $ to a homeomorphism of $X$, we set $\pi =\pi ^{-1}$ on $%
B^{\prime }$ and $\pi =id$ elsewhere.
\end{proof}

\begin{theorem}
\label{fginf}Let $\left( X,\varphi ,\mathbb{Z}^{d}\right) $ be a minimal
Cantor system. Let $A$, $B$ be two clopen sets in $X$ with $\left[ 1_{A}%
\right] -\left[ 1_{B}\right] \in Inf\left( \varphi \right) $. Let $x_{0}\in
A $ and $y_{0}=\varphi ^{w}\left( x_{0}\right) \in B$. Then there is an
element $\pi $ of the full group of $\varphi $ such that $\pi \left(
A\right) =B$ and $\pi \left( x_{0}\right) =y_{0}$ and the function $\zeta
:X\rightarrow \mathbb{Z}^{d}$ satisfying $\pi \left( x\right) =\varphi
^{\zeta \left( x\right) }\left( x\right) $ is continuous on $A\setminus
\left\{ a\right\} $.
\end{theorem}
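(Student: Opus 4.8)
The plan is to build $\pi$ as an infinite product of commuting involutions supplied by Lemma \ref{step}, assembled through a Cantor--Bernstein style back-and-forth. First I would reduce to the case $A\cap B=\emptyset$: replacing $A,B$ by $A\setminus B,\ B\setminus A$ preserves $\left[ 1_{A}\right] -\left[ 1_{B}\right]$, and one extends the final map by the identity on $A\cap B$; the basepoints $x_{0},y_{0}=\varphi ^{w}(x_{0})$ can be kept in the reduced sets (the case $x_{0}=y_{0}$ being trivial). The one algebraic fact I would record at the outset is that for any element $\sigma $ of the topological full group and any clopen $S$ one has $\left[ 1_{\sigma S}\right] =\left[ 1_{S}\right] $ in $G\left( \varphi \right) $: decomposing $S$ into the finitely many clopen pieces where the (continuous) cocycle of $\sigma $ is constant and using that $1_{\varphi ^{v}T}-1_{T}$ is a $\varphi $-coboundary. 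This is what keeps the infinitesimal hypothesis alive throughout the construction.

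Set $R_{0}^{A}=A$, $R_{0}^{B}=B$, and fix $\epsilon _{n}\downarrow 0$. At an odd stage $n$ I would apply Lemma \ref{step} to $\left( R_{n-1}^{A},R_{n-1}^{B}\right) $ with basepoints $x_{0},y_{0}$ and radius $\epsilon _{n}$, obtaining an involution $\rho _{n}$ in the topological full group that is the identity off a clopen set and swaps a clopen $A_{n}^{\prime }\supset R_{n-1}^{A}\setminus B\left( x_{0},\epsilon _{n}\right) $ with $\rho _{n}\left( A_{n}^{\prime }\right) \subset R_{n-1}^{B}\setminus \left\{ y_{0}\right\} $; then $R_{n}^{A}=R_{n-1}^{A}\setminus A_{n}^{\prime }\subset B\left( x_{0},\epsilon _{n}\right) $ and $R_{n}^{B}=R_{n-1}^{B}\setminus \rho _{n}\left( A_{n}^{\prime }\right) $. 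At an even stage I do the same with the roles of $A,B$ and of $x_{0},y_{0}$ exchanged, so that $R_{n}^{B}\subset B\left( y_{0},\epsilon _{n}\right) $. Using the algebraic fact above, $\left[ 1_{R_{n}^{A}}\right] -\left[ 1_{R_{n}^{B}}\right] =\left[ 1_{A}\right] -\left[ 1_{B}\right] \in Inf\left( \varphi \right) $ for every $n$ (and $Inf\left( \varphi \right) $ being a subgroup lets me flip signs at the even stages), so Lemma \ref{step} always applies. Since $\rho _{n}$ is supported in $R_{n-1}^{A}\cup R_{n-1}^{B}$ and the pieces peeled off at distinct stages are disjoint, the supports of the $\rho _{n}$ are pairwise disjoint, and $\pi :=\prod_{n}\rho _{n}$ (with $\pi \left( x_{0}\right) =y_{0}$, $\pi \left( y_{0}\right) =x_{0}$) is a well-defined involution of $X$.

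Next I would check the structural properties. The odd stages force $\bigcap _{n}R_{n}^{A}=\left\{ x_{0}\right\} $ and the even stages force $\bigcap _{n}R_{n}^{B}=\left\{ y_{0}\right\} $, so every point of $A\setminus \left\{ x_{0}\right\} $ (resp. $B\setminus \left\{ y_{0}\right\} $) is peeled off at some finite stage; hence $\pi $ restricts to a bijection $A\setminus \left\{ x_{0}\right\} \rightarrow B\setminus \left\{ y_{0}\right\} $, giving $\pi \left( A\right) =B$ and $\pi \left( x_{0}\right) =y_{0}$. On each clopen support of $\rho _{n}$ the cocycle $\zeta $ equals the continuous cocycle of $\rho _{n}$, so $\zeta $ is continuous on $X\setminus \left\{ x_{0},y_{0}\right\} $, and in particular on $A\setminus \left\{ x_{0}\right\} $ (in the disjoint case $y_{0}\notin A$).

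The delicate point, which I expect to be the main obstacle, is the continuity of $\pi $ as a map at $x_{0}$ (and symmetrically at $y_{0}$), which is exactly what upgrades $\pi $ to a homeomorphism, i.e. an element of the full group. The mechanism is that the $\pi $-image of the piece peeled off at stage $n$ lies inside $R_{n-1}^{B}$, and after the most recent even stage $R_{n-1}^{B}$ is already concentrated in a ball $B\left( y_{0},\epsilon ^{\prime }\right) $ with $\epsilon ^{\prime }\rightarrow 0$ as $n\rightarrow \infty $; meanwhile the peeled $A$-piece sits in a ball about $x_{0}$ of radius tending to $0$. Therefore, as $z\rightarrow x_{0}$ the stage at which $z$ is matched tends to infinity and $\pi \left( z\right) $ is trapped in ever smaller balls about $y_{0}$, so $\pi \left( z\right) \rightarrow y_{0}=\pi \left( x_{0}\right) $. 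The crux of the whole argument is this interleaving: by alternating control of the two remainders one guarantees that both $R_{n}^{A}$ and $R_{n}^{B}$ shrink to their respective basepoints while every late-stage image stays near $y_{0}$; the rest is bookkeeping.
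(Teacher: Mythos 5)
Your proposal is correct and follows essentially the same route as the paper's proof: a back-and-forth (Cantor--Bernstein style) alternating application of Lemma \ref{step} with radii $\epsilon_n \downarrow 0$, assembling $\pi$ from involutions with pairwise disjoint supports, and proving continuity at $x_0$ and $y_0$ by the same interleaving argument that traps late-stage images in small balls about $y_0$. Your one addition --- explicitly recording that $\left[1_{\sigma S}\right]=\left[1_{S}\right]$ for $\sigma$ in the topological full group, so that the infinitesimal hypothesis survives each peeling stage --- is a fact the paper uses only implicitly, and making it explicit is a genuine (if small) improvement in rigor.
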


\begin{proof}
Without loss of generality, $A$ and $B$ are disjoint, otherwise set $\pi =id$
on $A\cap B$. Fix $x_{0}\in A$ and $y_{0}=\varphi ^{w}\left( x_{0}\right)
\in B$. Suppose $\left\{ \epsilon _{n}\right\} $ is a decreasing sequence of
positive numbers which converges to $0$. Set $A_{0}=B_{0}=\emptyset $. Now
for each $n\geq 0$, we recursively do the following

\textbf{Step }$\mathbf{2n+1}$\textbf{:\ }Apply Lemma \ref{step} to $A_{2n}$, 
$B_{2n}$, $x_{0}$, $y_{0}$ and $\epsilon _{2n+1}$. This gives a clopen set $%
\left( A\setminus \cup _{i=0}^{2n}A_{i}\right) \setminus B\left(
x_{0},\epsilon _{2n+1}\right) \subset A_{2n+1}\subset \left( A\setminus \cup
_{i=0}^{2n}A_{i}\right) \setminus \left\{ x_{0}\right\} $ and element $\pi
_{2n+1}$ of the topological full group of $\varphi $ mapping $A_{2n+1}$ into 
$\left( B\setminus \cup _{i=0}^{2n}B_{i}\right) \setminus \left\{
y_{0}\right\} $. Set $B_{2n+1}=\pi _{2n+1}\left( A_{2n+1}\right) $.

\textbf{Step }$\mathbf{2n+2}$\textbf{:\ }Apply Lemma \ref{step} to $%
B\setminus B_{2n+1}$, $A\setminus A_{2n+1}$, $x_{0}$, $y_{0}$ and $\epsilon
_{2n+2}$. This gives a clopen set $B_{2n+2}$ with $\left( B\setminus \cup
_{i=0}^{2n+1}B_{i}\right) \setminus B\left( y_{0},\epsilon _{2n+2}\right)
\subset B_{2n+2}\subset \left( B\setminus \cup _{i=0}^{2n+1}B_{i}\right)
\setminus \left\{ y_{0}\right\} $ and element $\pi _{2n+2}$ of the
topological full group of $\varphi $ mapping $B_{2n+2}$ into $\left(
A\setminus \cup _{i=0}^{2n+1}A_{i}\right) \setminus \left\{ x_{0}\right\} $.
Set $A_{2n+2}=\pi _{2n+2}\left( B_{2n+2}\right) $. (Note that since $\pi
_{2n+2}^{2}=id$, then $\pi _{2n+2}\left( A_{2n+2}\right) =B_{2n+2}$.)

Note that the sets $\left\{ A_{n}\right\} $ and $\left\{ B_{n}\right\} $ are
each pairwise disjoint collections of clopen sets.

Set $\pi \left( x_{0}\right) =y_{0}$ and $\pi =id$ on $X\setminus \left(
A\cup B\right) $. For any other $x\in A$, there will be an $n$ such that $%
\epsilon _{2n+1}$ such that $x\in A\setminus B\left( x_{0},\epsilon
_{2n+1}\right) $. This means that $x\in A_{i}$ for a unique $i$ between $0$
and $2n+1$ which means that $\pi _{i}\left( x\right) $ is not the identity
map for exactly one $i$. Set $\pi \left( x\right) =\pi _{i}\left( x\right)
\in B$ for this value of $i$.

Let us check that $\pi $ is continuous at $x_{0}$ and $y_{0}$, it is fairly
clear that it is continuous everywhere else. Note that the set $A\setminus
\cup _{i=0}^{2n+1}A_{i}$ is a clopen set containing of $x_{0}$ and is a
subset of $B\left( x_{0},\epsilon _{2n+1}\right) $. The image of $A\setminus
\cup _{i=0}^{2n+1}A_{i}$ under $\pi $ is $B\setminus \cup _{i=0}^{2n+1}B_{i}$
which is a clopen set containing $y_{0}$ and 
\begin{equation*}
B\setminus \cup _{i=0}^{2n+1}B_{i}\subset B\setminus \cup
_{i=0}^{2n}B_{i}\subset B\left( y_{0},\epsilon _{2n}\right)
\end{equation*}%
This shows that $\pi $ is continuous at $x_{0}$ and $y_{0}$. That $\pi $ is
one-to-one, onto, and is in the full group is easy to check. Since each $\pi
_{k}$ is in the topological full group, the only possible discontinuity of
the cocycle for $\pi $ is at $a$.
\end{proof}

\begin{theorem}
Let $\left( X,\varphi ,\mathbb{Z}^{d}\right) $ and $\left( Y,\psi ,\mathbb{Z}%
^{d}\right) $ be two minimal Cantor systems which are bounded orbit
injection equivalent, with bounded orbit injections $\theta _{1}$ and $%
\theta _{2}$ into a common system $\left( Z,\alpha ,\mathbb{Z}^{d}\right) $
and let $x_{0}\in X$. Let $h_{\theta _{1}}:\left( G\left( \varphi \right)
,G\left( \varphi \right) _{+}\right) \rightarrow \left( G\left( \alpha
\right) ,G\left( \alpha \right) _{+}\right) $ and $h_{\theta _{2}}:\left(
G\left( \psi \right) ,G\left( \psi \right) _{+}\right) \rightarrow \left(
G\left( \alpha \right) ,G\left( \alpha \right) _{+}\right) $ be the
isomorphisms as in Theorem \ref{thm:iso}. Suppose $h_{\theta _{1}}\left[
1_{X}\right] -h_{\theta _{2}}\left[ 1_{Y}\right] \in Inf\left( \alpha
\right) $. Then there is an orbit equivalence $\theta $ from $\left(
X,\varphi ,\mathbb{Z}^{d}\right) $ to $\left( Y,\psi ,\mathbb{Z}^{d}\right) $
such that the cocycle function $\eta :X\times \mathbb{Z}^{d}\rightarrow 
\mathbb{Z}^{d}$ satisfying%
\begin{equation*}
\varphi ^{w}(x)=y\iff \psi ^{\eta (x,w)}\circ \theta (x)=\theta (y)
\end{equation*}%
has the property that for any $w\in \mathbb{Z}^{d}$, $\eta (\cdot ,w)$ is
continuous on $X\setminus \left\{ x_{0},\varphi ^{-w}x_{0}\right\} $.
\end{theorem}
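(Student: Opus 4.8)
The plan is to build the orbit equivalence directly as a composition $\theta = \theta_2^{-1}\circ\pi\circ\theta_1$, where $\pi$ is a full group element of $\alpha$ supplied by Theorem \ref{fginf}, and then to read off its cocycle and locate its two discontinuities. By construction of the isomorphisms in Theorem \ref{thm:iso} one has $h_{\theta_1}[1_X]=[1_{\theta_1(X)}]$ and $h_{\theta_2}[1_Y]=[1_{\theta_2(Y)}]$ in $G(\alpha)$, so the hypothesis says that the clopen sets $A=\theta_1(X)$ and $B=\theta_2(Y)$ of $Z$ satisfy $[1_A]-[1_B]\in Inf(\alpha)$. Set $a_0=\theta_1(x_0)\in A$. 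Since $\alpha$ is minimal the orbit of $a_0$ meets the nonempty open set $B$, so there is $w_0\in\mathbb{Z}^d$ with $b_0=\alpha^{w_0}(a_0)\in B$. Applying Theorem \ref{fginf} to $A,B,a_0,b_0$ yields a full group element $\pi$ of $\alpha$ with $\pi(A)=B$, $\pi(a_0)=b_0$, and whose cocycle $\zeta$ (defined by $\pi(z)=\alpha^{\zeta(z)}(z)$) is continuous on $A\setminus\{a_0\}$. As $\theta_1\colon X\to A$ and $\theta_2\colon Y\to B$ are homeomorphisms and $\pi$ restricts to a homeomorphism $A\to B$, the map $\theta$ is a homeomorphism of $X$ onto $Y$.

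Next I would check that $\theta$ is an orbit equivalence. The maps $\theta_1,\theta_2$ carry orbits to orbits by the orbit injection property \eqref{oicocycle}, and the full group element $\pi$ preserves every $\alpha$-orbit; hence if $x,y$ lie on one $\varphi$-orbit then $\theta_1(x),\theta_1(y)$, and therefore $\pi\theta_1(x),\pi\theta_1(y)$, lie on one $\alpha$-orbit inside $B$, and applying $\theta_2^{-1}$ places $\theta(x),\theta(y)$ on one $\psi$-orbit. The same argument applied to $\theta^{-1}=\theta_1^{-1}\circ\pi^{-1}\circ\theta_2$ gives the converse, so $\theta$ maps orbits bijectively to orbits.

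To compute the cocycle, for $\varphi^w(x)=y$ I combine $\alpha^{\eta_1(x,w)}\theta_1(x)=\theta_1(y)$ (with $\eta_1$ the continuous cocycle of $\theta_1$) with $\pi(z)=\alpha^{\zeta(z)}(z)$ to obtain
\[
\theta_2(\theta(y))=\pi\theta_1(y)=\alpha^{k(x,w)}\bigl(\pi\theta_1(x)\bigr)=\alpha^{k(x,w)}\theta_2(\theta(x)),
\]
where $k(x,w)=\zeta(\theta_1(\varphi^w x))+\eta_1(x,w)-\zeta(\theta_1(x))$. Introducing the inverse cocycle $\xi_2$ of $\theta_2$ (for $u\in Y$ and $m\in\mathbb{Z}^d$ with $\alpha^m\theta_2(u)\in B$, the unique $\xi_2(u,m)$ with $\psi^{\xi_2(u,m)}(u)=\theta_2^{-1}(\alpha^m\theta_2(u))$), the displayed identity shows the orbit equivalence cocycle is $\eta(x,w)=\xi_2(\theta(x),k(x,w))$. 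One then verifies $\xi_2(\cdot,m)$ is locally constant: since $\alpha^{\eta_2(u,\xi_2(u,m))}\theta_2(u)=\alpha^m\theta_2(u)$ and $\psi$ is free, $\xi_2(u,m)$ inverts the injection $v\mapsto\eta_2(u,v)$ at $m$, and on the clopen set where the continuous $\mathbb{Z}^d$-valued map $\eta_2(\cdot,v_0)$ equals $m$ one has $\xi_2(\cdot,m)=v_0$. Finally, $\eta_1(\cdot,w)$ is continuous on all of $X$, $\zeta\circ\theta_1$ is continuous on $X\setminus\{x_0\}$, and $\zeta\circ\theta_1\circ\varphi^w$ is continuous on $X\setminus\{\varphi^{-w}x_0\}$, so $k(\cdot,w)$ is continuous, hence locally constant, on $X\setminus\{x_0,\varphi^{-w}x_0\}$; composing with the continuous $\theta$ and the locally constant $\xi_2$ shows $\eta(\cdot,w)$ is continuous there.

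The main obstacle is the continuity bookkeeping in the last step: one must confirm that the only discontinuities of $\zeta\circ\theta_1$ and of $\zeta\circ\theta_1\circ\varphi^w$ are the single points forced by Theorem \ref{fginf}, so that precisely the two points $x_0$ and $\varphi^{-w}x_0$ survive, and one must separately establish continuity of the inverse cocycle $\xi_2$ from continuity of $\eta_2$ together with freeness of $\psi$. By comparison, the homeomorphism and orbit-to-orbit verifications are routine.
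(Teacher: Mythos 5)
Your proposal is correct and follows essentially the same route as the paper's proof: both construct the orbit equivalence as $\theta=\theta_{2}^{-1}\pi\theta_{1}$ with $\pi$ supplied by Theorem \ref{fginf} applied to $\theta_{1}(X)$ and $\theta_{2}(Y)$, and both obtain continuity of $\eta(\cdot,w)$ on $X\setminus\{x_{0},\varphi^{-w}x_{0}\}$ by combining continuity of the cocycle of $\theta_{1}$, local constancy of $\zeta$ away from the base point, and boundedness of $\theta_{2}$. Your explicit choice of $b_{0}=\alpha^{w_{0}}(a_{0})\in\theta_{2}(Y)$ via minimality (needed to legitimately invoke Theorem \ref{fginf}) and your inverse-cocycle argument for $\theta_{2}$ are simply more detailed renderings of steps the paper treats implicitly.
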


\begin{proof}
Since $h_{\theta _{1}}\left[ 1_{X}\right] -h_{\theta _{2}}\left[ 1_{Y}\right]
\in Inf\left( \alpha \right) $, we have $\left[ 1_{\theta _{1}\left(
X\right) }\right] -\left[ 1_{\theta _{2}\left( Y\right) }\right] \in
Inf\left( \alpha \right) $. Fix $x_{0}\in X$. By Lemma \ref{fginf}, there is
an element $\pi $ of the full group of $\alpha $ which maps $\theta
_{1}\left( X\right) $ to $\theta _{2}\left( Y\right) $ with the property
that the associated cocycle $\zeta $ satisfying $\pi \left( \theta
_{1}\left( x\right) \right) =\alpha ^{\zeta \left( x\right) }\theta
_{1}\left( x\right) $ is continuous except at $x_{0}$. Set $\theta \left(
x\right) =\theta _{2}^{-1}\pi \theta _{1}\left( x\right) $. Then $\theta $
is an orbit equivalence.

Let $\eta $ be the cocycle function satisfying%
\begin{equation*}
\varphi ^{w}(x)=y\iff \psi ^{\eta (x,w)}\circ \theta (x)=\theta (y)
\end{equation*}%
Fix $w\in \mathbb{Z}^{d}$. We aim to prove that $\eta (\cdot ,w)$ is
continuous except at $x_{0}$ and $\varphi ^{-w}\left( x_{0}\right) $.
Suppose $x\in X\setminus \left\{ x_{0},\varphi ^{-w}\left( x_{0}\right)
\right\} $. Then because $\theta _{1}$ is a bounded orbit injection, there
is a $v\in \mathbb{Z}^{d}$ and a clopen neighborhood $U_{1}$ of $x$ such
that $U_{1}\subset X\setminus \left\{ x_{0},\varphi ^{-w}\left( x_{0}\right)
\right\} $ and $\theta _{1}\varphi ^{w}=\alpha ^{v}\theta _{1}$ on $U_{1}$.
Now since $x\in X\setminus \left\{ x_{0},\varphi ^{-w}\left( x_{0}\right)
\right\} $, there is clopen neighborhood $U_{2}$ of $x$ such that $%
U_{2}\subset U_{1}$ and $\zeta $ is constant on both $U_{2}$ and $\varphi
^{w}U_{2}$. Set $\zeta _{0}=\zeta \theta _{1}\left( x\right) $ and $\zeta
_{1}=\zeta \theta _{1}\varphi ^{w}\left( x\right) $. Then on the set $U_{2}$%
, we have $\alpha ^{\zeta _{1}+v-\zeta _{0}}\pi \theta _{1}=\pi \theta
_{1}\varphi ^{w}$. Finally, because $\theta _{2}$ is a bounded orbit
injection and both $\pi \theta _{1}\left( x\right) $ and $\pi \theta
_{1}\varphi ^{w}\left( x\right) $ are in $\theta _{2}\left( Y\right) $,
there is a clopen neighborhood $U_{3}$ of $x$ such that $U_{3}\subset U_{2}$
and a vector $u$ such that $\theta _{2}^{-1}\alpha ^{\zeta _{1}+v-\zeta
_{0}}\pi \theta _{1}=\psi ^{u}\theta _{2}^{-1}\pi \theta _{1}$. Therefore on 
$U_{3}$, $\theta _{2}^{-1}\pi \theta _{1}\varphi ^{w}=\psi ^{u}\theta
_{2}^{-1}\pi \theta _{1}$, which implies $\eta (\cdot ,w)$ is continuous on $%
X\setminus \left\{ x_{0},\varphi ^{-w}\left( x_{0}\right) \right\} $.
\end{proof}

\providecommand{\bysame}{\leavevmode\hbox to3em{\hrulefill}\thinspace} %
\providecommand{\MR}{\relax\ifhmode\unskip\space\fi MR } 
\providecommand{\MRhref}[2]{  \href{http://www.ams.org/mathscinet-getitem?mr=#1}{#2}
} \providecommand{\href}[2]{#2}


\begin{thebibliography}{99}
\bibitem{Blackadar98} {B}. {B}lackadar, \emph{{K}-theory for operator
algebras}, {MSRI} Publications, vol.~5, 1998.

\bibitem{Rudolph} A. del Junco, D. Rudolph, Daniel J.,\emph{\ Kakutani
equivalence of ergodic }$\mathbb{Z}^{\emph{n}}$\emph{\ actions, }Ergodic
Theory Dynam. Systems \textbf{4} (1984), no. 1, 89--104.

\bibitem{Elliott93} {G}.~{A}. {E}lliott, \emph{On the classification of {$C%
\sp{\ast}$}-algebras of real rank zero}, J. Reine Angew. Math. \textbf{443}
(1993), 179--219.

\bibitem{Forrest} A. Forrest, \emph{A Bratteli diagram for commuting
homeomorphisms of the Cantor set}, Internat. J. Math. \textbf{11} (2000),
177--200.

\bibitem{Glasner} E. Glasner and B. Weiss, \emph{Weak orbit equivalence of
Cantor minimal systems}, Internat. J. Math. \textbf{6} (1995), 559--579.

\bibitem{GraciaVarilly} J.~M. {G}racia {B}ondia, J.~C. {V}arilly, and H.~{F}%
igueroa, \emph{Elements of noncommutative geometry}, Birkh{\"a}user, 2001.

\bibitem{Lightwood07} {S}. {L}ightwood and {N}. {O}rmes, \emph{Bounded orbit
injections and suspension equivalence for minimal $\mathbb{Z}^{2}$-actions},
Ergodic Theory and Dynamical Systems \textbf{26} (2007), 1--30.

\bibitem{Matui08} H. Matui, \emph{Torsion in coinvariants of certain Cantor
minimal }$\mathbb{Z}^{2}$\emph{-systems}, Trans. Amer. Math. Soc. \textbf{360%
} (2008), no. 9, 4913--4928.

\bibitem{Pedersen79} {G}.~{K}. {P}edersen, \emph{{C}*-{A}lgebras and their
automorphism groups}, Academic Press, 1979.

\bibitem{Phillips05} {N}.~{C}. {P}hillips, \emph{Crossed products of the {C}%
antor set by free minimal actions of $\mathbb{Z}^d$}, Communications in
Math. Physics \textbf{256} (2005), 1--42.

\bibitem{PutnamSurvey} {I}. {P}utnam, \emph{Orbit equivalence of {C}antor
systems: A survey and a new proof}, Preprint.

\bibitem{Putnam95} {T}. {G}iordano, {I}.~{F}. {P}utnam, and {C}.~{F}. {S}%
kau, \emph{Topological orbit equivalence and {C*}-crossed products}, J.
Reine Angew. Math. \textbf{469} (1995), 51--111.

\bibitem{Putnam08} {T}. {G}iordano, H. Matui, {I}.~{F}. {P}utnam, and {C}.~{F%
}. {S}kau, \emph{Orbit equivalence for Cantor minimal }$\mathbb{Z}^{2}$\emph{%
-actions}, Journal A.M.S., \textbf{21 }(2008), 863-892.

\bibitem{Putnam09} T. Giordano, H. Matui,{I}.~{F}. {P}utnam, and C.F. Skau, 
\emph{Orbit equivalence for Cantor minimal }$\mathbb{Z}^{d}$\emph{-actions},
(preprint arXiv:0810.3957).

\bibitem{Rieffel74b} {M}.~{A}. {R}ieffel, \emph{Morita equivalence for {$C%
\sp{\ast} $}-algebras and {$W\sp{\ast} $}-algebras}, J. Pure Appl. Algebra 
\textbf{5} (1974), 51--96.

\bibitem{Williams07} {D}.~{P}. {W}illiams, \emph{{C}rossed {P}roduct of {$C%
\sp\ast$}-algebras}, Mathematical Surveys and Monographs, vol. 134, AMS,
2007.

\bibitem{Zeller-Meier68} {G}. {Z}eller{-}{M}eier, \emph{Produits crois\'es {%
d'u}ne {C*-}alg\`ebre par un groupe {d' A}utomorphismes}, {J}. {M}ath. pures
et appl. \textbf{47} (1968), no.~2, 101--239.
\end{thebibliography}
\end{document}